\newcommand\del[1]{}
\newcommand\OS{\mbox{\rm Os--}\!\!}
\newcommand\Hyp{\mbox{Hyp}}
\newtheorem{notation}{Notation}[section]
\newtheorem{thm}{Theorem}[section]
\newtheorem{rem}{Remark}[section]
\newtheorem{prop}{Proposition}[section]
\newtheorem{cor}{Corollary}[section]
\newtheorem{ex}{Example}[section]
\newtheorem{defn}{Definition}[section]
\newcommand{\CU}{\mathcal{U}}
\newcommand{\Leb}{\mbox{Leb}}
\newcommand{\ta}{{\tilde a}} 
\newcommand{\deltaa}{{\delta}} 
\newcommand{\gr}{{\langle}} 
\newcommand{\gl}{{\rangle}} 
\newcommand{\ggxi}{\gr |\xi|\gl}
\newcommand{\ggx}{\gr |x|\gl}
\newcommand{\lgxi}{\gr |\xi|\gl}
\renewcommand{\sharp}{\circ}
\newcommand{\CB}{{\mathcal{B}}}
\newcommand{\cal}{C}
\newcommand{\tinv}[1]{\tfrac{1}{#1}}
\newcommand\sou[1]{}
\numberwithin{equation}{section}
\newcommand\red[1]{{\color{red}#1}}
\newcommand{\bcase}{\begin{cases}}
\newcommand{\ecase}{\end{cases}}
\newcommand{\pmat}{\begin{pmatrix}}
\newcommand{\epmat}{\end{pmatrix}}
\newcommand{\levy}{L\'evy }
\newcommand{\supp}{{\mbox{supp}}}
\newcommand{\barray}{\begin{array}{rcl}}
\newcommand{\earray}{\end{array}}
\newcommand{\If}{{\;\mbox{ if }}}
\newcommand{\lqq}{\lefteqn}
\newcommand{\dom}{{\;\mbox{dom}}}
\newcommand{\la} {{\langle}}
\newcommand{\ra} {{\rangle}}
\newcommand{\CBB} {{\mathcal{B}}}
\newcommand{\CP} {{\mathcal{P}}}
\newcommand{\CN} {{\mathcal{N}}}
\newcommand{\CSS} {{\mathcal{S}}}
\newcommand{\lk}{\left}
\newcommand{\rk}{\right}
\newcommand{\ep} {\varepsilon }
\newcommand{\be} {\begin{enumerate}}
\newcommand{\ee} {\end{enumerate} }
\newcommand{\CF}{{ \mathcal{ F } }}
\newcommand{\CA}{{ \mathcal{ A } }}
\newcommand{\CC}{{\mathbb{C}}}
\newcommand{\RR}{{\mathbb{R}}}
\newcommand{\NN}{\mathbb{N}}
\newcommand{\PP}{{\mathbb{P}}}
\newcommand{\EE}{ \mathbb{E} }
\newcommand{\TT}{{\rm I \kern -0.2em T}}
\newcommand{\DEQS}{\begin{eqnarray*}}
\newcommand{\EEQS}{\end{eqnarray*}}
\newcommand{\DEQSZ}{\begin{eqnarray}}
\newcommand{\EEQSZ}{\end{eqnarray}}
\begin{document}

\title[The Analyticity of Markovian semigroup]
{On Markovian semigroups of L\'evy driven SDEs, symbols and pseudo--differential operators}
%
\author{Pani W.~Fernando}
 \address{Department of Mathematics, Faculty of Applied Sciences, University of Sri Jayewardenepura, Gangodawila, Nugegoda,
     Sri Lanka.}
  \email{bpw@sci.sjp.ac.lk}
\author{E.~Hausenblas}
        \address{Department of Mathematics and Informationtechnology,
                 Montanuniversitaet Leoben,
 Austria.}

\email{erika.hausenblas@unileoben.ac.at}

\author{Fahim Kistosil}
        \address{Department of Mathematics and Informationtechnology,
                 Montanuniversitaet Leoben,
 Austria.}
\email{....}
\subjclass{Primary 60H10; Secondary  60J75, 60J40, 47A05, 35R30, 47E05}
\keywords{Stochastic integral of jump type, Poisson random measures, Markovian semigroup, pseudo--differential operators, non--local operators.}

\date{\today}

%
\maketitle
\begin{abstract}
We analyse analytic properties of nonlocal transition semigroups associated with a class of stochastic differential equations (SDEs) in $\RR^d$ driven by pure jump--type L\'evy processes.
First, we will show under which conditions the semigroup will be analytic on the  Besov space $B_{p,q}^ m(\RR^d)$ with $1\le p, q<\infty$ and $m\in\RR$.
Secondly, we present some applications by proving the strong Feller property and give weak error estimates for approximating schemes of the SDEs over the Besov space $B_{\infty,\infty}^ m(\RR^d)$.
\end{abstract}

\section{Introduction}
\label{intro}
The purpose of the article is to show smoothing properties for the Markovian semigroup generated by stochastic differential equations driven by pure jump--type L\'evy processes.
To be more precise, let $L=\{L(t):t\ge 0\}$ be a family of \levy processes.
Let us  consider the stochastic differential equations of the form
\DEQSZ\label{eq1}
\lk\{\barray
dX^ x(t) &=& b(X^ x(t-))\, dt +\sigma(X^ x(t-)) dL(t)
\\
X^ x(0)&=&x,\quad x\in\RR^ d,
\earray\rk.
\EEQSZ
where $\sigma:\RR^ d\to\RR^ d$ and $b:\RR^ d \to\RR^ d $ are Lipschitz continuous. Under this assumption, the existence and uniqueness of a solution to equation
\eqref{eq1} is well established, see for e.g. \cite[p. 367, Theorem 6.2.3]{applebaum}.
Let $(\CP_{t})_{t\ge0}$ be the Markovian semigroup associated to $X$ defined
by
\DEQSZ\label{semigroup1}
\lk( \CP_t f\rk) (x) := \EE \lk[ f(X^ x(t))\rk],\quad t\ge 0,\quad x\in\RR^d.
\EEQSZ
Then, it is known that  $(\CP_{t})_{t\ge0}$ is a Feller semigroup (see \cite[Theorem 6.7.2]{applebaum}) and
its infinitesimal generator is given by
$$
Au(x) = \int_\RR e^ {ix^T\xi} a(x,\xi)\hat u(\xi)\, d\xi\quad u\in \CSS(
\RR^ d),
$$
where the symbol $a$ is defined by
\DEQSZ\label{defsym}
a(x,\xi) := -\lim_{t \downarrow 0} \frac 1t \EE \lk[ e^ {i (X^ x(t)-x)^ T \xi} -1\rk] ,\quad x\in\RR^ d.
\EEQSZ
In \cite{BG1}, the authors investigate the analytic properties of the Markovian semigroup generated by an SDE driven by a L\'evy process (see Theorem 2.1 in \cite{BG1}). These type of results are used to solve several applications which arise in fields related to probability theory such as nonlinear filtering theory \cite{copula}, or stochastic numerics (see section \eqref{secondapp}). In this article, we put a further step and investigate under which constraints the corresponding Markovian semigroup $(\CP_t)_{t\ge 0}$ driven by an SDE with pure jump noise forms an analytic semigroup in the Besov spaces $B_{p,q}^ m(\RR^d)$ with $1\le p, q<\infty$ and $m\in\RR$. Here, we used Besov-spaces due to two reasons. First, Besov spaces are quite general; one covers on one side the space of continuous functions and the on the other side, e.g.\ the scale of Hilbert spaces $L^2(\RR^d)$ and $H^s_2(\RR^d)$, $s\in\RR$
(see \cite[2.3.5]{triebel2} or \cite[p.\ 14]{Runst+Sickel_1996}). Even, if we exclude in our results the case where $q=\infty$ or $p=\infty$, by embedding Theorems (see \cite[p.\ 30-31]{Runst+Sickel_1996}), one get easily good estimates for
$B_{\infty,\infty}^ s(\RR^d)$, $s\not\in\NN$, a space which coincides with $C^s_b(\RR^d)$. 
In this way, we can use the analyticity property of the Markovian semigroup $(\CP_t)_{t\ge 0}$ in Besov spaces to obtain the strong Feller property of $(\CP_t)_{t\ge 0}$. The strong Feller property of the Markovian semigroup associated with $\RR^d$-valued SDEs plays an important role in the long time behaviour or to show the uniqueness of an invariant measure of SDEs. In this way, we obtain our first motivation from studying the regularity of the Markovian semigroup $(\CP_t)_{t\ge 0}$ (e.g. see Corollary \eqref{sftheo1} and Corollary \eqref{sfcor2}) associated with equation \eqref{eq1}. In particular, we were interested in getting weak assumptions on the coefficients $b$ and $\sigma$.
The second motivation comes from studying the Monte-Carlo error of an approximation of an SDE driven by \levy noise. To be more precise, it enables us to obtain an explicit estimate of the distance between the semigroup  associated with the original problem \eqref{eq1} and the semigroup  associated with certain approximations of the original problem.

In \cite[Theorem 2.2]{ishikawa} and \cite{picard}, the authors get some results on the density of the solution of an SDE driven by a L\'evy process.
These estimates are uniform in space and are related to our results, see Corollaries \ref{densitytheo1} and \ref{density}.
%
In \cite{bally17}, they represent their main result as the propagation of the regularity of the Markovian semigroups
induced by the solution process of an SDE driven by a  Brownian motion and a L\'evy process. 
In particular, they show that for all $k\in\NN$ there  exists a constant $C>0$ depending on the operator $a$ and $T>0$ such that
$$
\sup_{0<t\le T}\|\CP_t f\|_{W^k_{\infty}} \le C\|f\|_{W^k_\infty},
$$
for all $f\in C_b^k(\RR^d)$. Here $\|f\|_{k,\infty}$ is the supremum norm of $f$ and its first $k$ derivatives. In the case of $k=0$, this means that the semigroup $(\CP_t)_{t\ge 0}$ is a Feller semigroup.
K\"uhn, \cite{kuehn}, investigates the Feller property of the Markovian semigroup for unbounded diffusion coefficients. In \cite{marinelli}, the analyticity of the Markovian semigroup
$(\CP_t)_{t\ge0}$ is proven for SDEs with only additive noise; the noise has to have a very special form.
 Notice also that in \cite{Dong}, authors derived a Bismuth-Elworthy-Li type formula for the L\'evy process in the Hilbert space setting. In \cite{ishi2} the authors got nice estimates on the density of solution processes driven by a pure L\'evy processes. They investigated the case in $\RR^d$, $d>1$,
 where the L\'evy measure is only supported by the axes and have different exponents. In addition, they  give some short time estimates for the density.

\medskip
The paper is organised as follows. In section \ref{symbol}, we give a short review of the symbols associated with the SDEs driven by L\'evy processes and introduce some notations. The main result of this article, i.e.\ the invertibility of the pseudo-differential operator,  with the detailed proof is postponed to section \ref{nsec}, followed
by section \ref{sec3}, where we study under which conditions on the symbol, the semigroup $(\CP_t)_{t\ge 0}$ is an analytic semigroup in a general Besov  space $B_{p,q}^ m(\RR^d)$, $1\le p,q<\infty$.
The motivation for our main results, i.e. two applications to solution processes of stochastic differential equations,  are prioritised and  presented in section  \ref{firstapp}  and  \ref{secondapp}.
Thus, as the first application, we verify in section \ref{firstapp} under which constrains the semigroup $(\CP_t)_{t\ge 0}$ is strong Feller.
In section \ref{secondapp}, we calculate the rate of convergence for the Monte Carlo error for SDEs driven by a pure L\'evy process.
The theoretical result is also verified by some  numerical experiments.
Finally, in the appendix, we give a short overview of pseudo-differential operators.


\begin{notation}
For a multiindex $\alpha=(\alpha_1,\alpha_2,\ldots,\alpha_n)\in\NN^n$ let $|\alpha|=\alpha_1+\cdots+\alpha_n$ and $\alpha!={\alpha_1}!\cdots \alpha_n!$.
For  an element $\xi\in\RR^n$, let $\xi^\alpha$ be defined by $\xi_1^{\alpha_1}\xi_2^{\alpha_2}\cdots \xi_n^{\alpha_n}$.
Moreover for a function $f:\RR^d\to \mathbb{C}$ we write $\partial_x ^ \alpha f(x)$ for
$$ {\partial ^ \alpha \over \partial {x_1}\partial {x_2}\cdots \partial {x_d}} f(x).
$$
In addition, let us define the brackets $\langle\,\cdot\,\rangle:\RR\ni \xi \mapsto \langle\xi\rangle^\rho:=(1+|\xi|^2 )^\frac \rho 2\in\RR$.
Following inequality, also called {\sl Peetres inequality}, is used on several places
$$
\langle x+y\rangle^ s \le c_s \langle x\rangle^ s \langle y\rangle^ {|s|},\quad x,y\in \RR^ d , \, s\in \RR.
$$

\medskip

For a multiindex $\alpha=(\alpha_1,\alpha_2,\ldots,\alpha_n)\in\NN^n$ let $|\alpha|=\alpha_1+\cdots+\alpha_n$ and $\alpha!={\alpha_1}!\cdots \alpha_n!$.
For a multiindex $\alpha=(\alpha_1,\alpha_2,\ldots,\alpha_n)\in\NN^n$ and an element $\xi\in\RR^n$ let $\xi^\alpha$ be defined by $\xi_1^{\alpha_1}\xi_2^{\alpha_2}\cdots \xi_n^{\alpha_n}$.

\medskip

Let $X$ be a non empty set and $f,g:X\to[0,\infty)$. We set
$f(x)\lesssim g(x)$,$x\in X$, iff there exists a $C>0$ such that $f(x)\le C g(x)$ for all $x\in X$. Moreover, if $f$ and $g$ depend on a further variable $z\in Z$, the statement
for all $z\in Z$, $ f(x,z)\lesssim g(x,z)$, $x\in X$ means that for every $z\in Z$ there exists a real number $C_z>0$ such that $f(x,z)\le C_z g(x,z)$ for every $x\in X$.
Also we set $f(x)\asymp g(x)$, $x\in X$, iff $f(x)\lesssim g(x)$ and $g(x)\lesssim f(x)$ for all $x\in X$. Finally, we say
$f(x)  \gtrsim g(x)$, $x\in X$, iff $g(x)  \lesssim f(x)$, $x\in X$. Similarly as above, we handle the case if the functions depend on a further variable.

Let $\CSS(\RR^d)$ be the Schwartz space of infinite often differentiable functions  where  all derivatives
decreases faster than any power of $|x|$ as $|x|$ tends to infinity. Let $\CSS'(\RR^d)$ be the dual of $\CSS(\RR^d)$.

If $m\in\NN$ we define
$$
\cal^m_b(\RR^d ):= \{ f\in \cal^0_b(\RR^d ): D^\alpha f\in \cal^0_b(\RR^d ), |\alpha|\le m\}
$$
endowed with the norm
$$
|f|_{\cal_b^m}:= \sum_{|\alpha|\le m} |D^\alpha f|_{\cal^0_b}.
$$
Let $s\in\RR\setminus \NN$, then we put $s=[s]+\{s\}$, where $[s]$ is an integer and $0\le \{s\}<1$.
Then 
$$
C_b^{s}(\RR^d ) := \lk\{ f\in C_b^{[s ]}(\RR^d ): \sum_{|\alpha|=[s ]} \sup_{x\not =y} \frac{\lk| D^\alpha f(x)-D^\alpha f(y)\rk| }{|x-y|^{\{s \}}}<\infty\rk\}
$$
equipped with the norm
$$
|f|_{C_b^{s }}:= |f|_{C_b^{[s]}}+ \sum_{|\alpha|=[s ]} \sup_{x\not =y} \frac{\lk| D^\alpha f(x)-D^\alpha f(y)\rk| }{|x-y|^{\{s \}}}.
$$
In order to define Besov spaces
 as  given in \cite[Definition 2,
pp. 7-8]{Runst+Sickel_1996} let us chose  first a function
$\psi\in{\mathcal{S}}(\mathbb{R}^d)$ such that $0\le \psi(x)\le 1$, $x\in
\mathbb{R}^d$ and
$$
\psi(x) = \left\{ \begin{array}{rcl} 1,&\mbox{ if } & |x|\le 1,\\
0&\mbox{ if } & |x|\ge \frac 32.
\end{array}\right.
$$
Then, let us put \begin{eqnarray*}
 \left\{ \begin{array}{rcl}\phi_0(x) &=&\psi(x), \; x\in \mathbb{R}^d,
\\
\phi_1(x) &=&\psi(\frac x2)-\psi(x), \; x\in \mathbb{R}^d,
\\
\phi_j(x) &=&\phi_1(2 ^{-j+1} x),\; x\in \mathbb{R}^d, \quad
j=2,3,\ldots.
\end{array} \right.
\end{eqnarray*}
Since we need it later on let
\DEQSZ\label{defu1}
\mathcal{U}_1&=&\supp(\phi_1).
\EEQSZ
\begin{defn}
Let $s\in{\mathbb{R}}$,  $0<p\le \infty$ and $f \in {\mathcal{S}}^\prime(\mathbb{R}^d)$. If $0<q <
\infty$  we put
\begin{eqnarray*} \left| f\right|_{B^{s}_{p,q}} &=& \left( \sum_{ j=0} ^\infty  2
^{sjq}\left| {\mathcal{F}} ^{-1} \left[ \phi_j{\mathcal{F}} f\right] \right|_{L ^p} ^q\right)
^\frac 1q = \Vert\Big( 2 ^{sj}\left| {\mathcal{F}}
^{-1} \left[ \phi_j{\mathcal{F}} f\right]\right|_{L ^p}\Big)_{j\in \mathbb{N}} \Vert_{l^q}.
\end{eqnarray*}
\end{defn}
\end{notation}
\section{Symbols, their definitions and properties}\label{symbol}
\label{aD}

In this section, we give a short review of symbols coming up Hoh's and L\'evy's symbols
in dealing with processes generated by L\'evy processes. Besides, we introduce some notations.
Throughout the remaining article, let $L=\{L^x(t):t\ge 0,x\in\RR^ d\}$ be a family of \levy processes $L^ x$, where $L^x$ is a \levy process starting  at $x\in\RR^ d$. Then $L$ generates a Markovian semigroup $( \CP_t)_ {t\ge 0}$ on $C_b (\RR^ d)$
by
$$ \CP_t f (x) := \EE f(L^x(t)),\quad f\in C_b (\RR^ d).
$$
Let $A$ be
 the infinitesimal generator of $(\CP_t)_{t\ge 0}$ acting on $\cal^{2}_b(\RR^ d)$ defined by
\DEQSZ\label{inf_g}
A\, f:= \lim_{h\to 0} \frac 1 h \lk( \CP_h-\CP_0\rk)\, f, \quad f\in \cal^{2}_b(\RR^ d).
\EEQSZ
Another way of defining $A$ is done by  \levy symbols (see \cite{hoh1}). In particular let 
$$
\psi(\xi)=\frac 1t \ln ( \EE e^{i\la\xi,L(t)\ra} ) \, ,\quad \xi\in\RR^ d.
$$
Observe that we have  (see e.g.\ \cite[P.42]{applebaum} and \cite{sato})
$$\psi(\xi) = \int_{\RR^ d\setminus\{0\}}\lk( e ^{i\la \xi,z\ra}-1-i\la \xi, x\ra\mathbb{I}_{\{|z|\le 1\}}\rk) \, \nu(dz) 
, \quad \xi\in\RR^ d.
$$
If $L$ is a pure jump process with symbol $\psi$, then the
infinitesimal generator defined by \eqref{inf_g} can also be
written as
\DEQSZ\label{opdef}
A\, f & =& - \int_{\RR^ d} e^{i \la \xi, x\ra } \psi(\xi) \hat f (\xi)\, d\xi, \quad f\in \CSS( 
\RR^ d).
\EEQSZ
The operator $A$, usually denoted in the literature by $\psi=\psi(D)$,  is well defined in $\cal^{2}_b(\RR^ d)$, has values
in $\CBB _b(\RR^ d)$ (bounded Borel functions in $\RR^ d$) and satisfies the positive maximum principle
(see e.g.\ \cite[Theorem 4.5.13 ]{Jacob-I}).
Therefore, 
 $A$ generates a Feller semigroup on $\cal^\infty_b(\RR^ d)$
and a sub--Markovian semigroup on $L^2(\RR^ d)$ (see  e.g.\
\cite[Theorem 2.6.9 and Theorem 2.6.10]{Jacob-II}).
To characterise the symbol, we introduce the generalised Blumenthal--Getoor index.
\begin{defn}
Let $L$ be a \levy process with symbol $\psi$ and  $\psi\in \cal^{k}_b(\RR^d\setminus\{0\})$ for some  $k\in\NN_0$. Then
the Blumenthal--Getoor index of order $k$ is defined by
$$
s := \inf_{\lambda>0\atop |\alpha|\le k} \lk\{\lambda: \lim_{\xi\to\infty} {|\partial^ \alpha_\xi \psi(\xi)|\over |\xi|^{\lambda-|\alpha|} }=0\rk\}.
%
$$
\del{Let
$$
s^+ := \inf_{\lambda>0\atop |\alpha|\le k} \lk\{\lambda: \limsup_{|\xi|\to\infty} {|\partial^ \alpha_\xi \psi(\xi)|\over |\xi|^{\lambda-|\alpha|} }=0\rk\},
$$
be the upper and
$$
s^- := \inf_{\lambda>0\atop |\alpha|\le k} \lk\{\lambda: \liminf_{|\xi|\to\infty} {|\partial^ \alpha_\xi  \psi(\xi)|\over |\xi|^{\lambda-|\alpha|} }=0\rk\},
$$
be the lower Blumenthal--Getoor index $s^+$ of order $k$.
}
Here $\alpha$ denotes a  multi-index. If $k=\infty$ then Blumenthal--Getoor index of infinity order is defined by
$$
s := \inf_{\lambda>0\atop \alpha \,\,\mbox{\tiny \rm is a multi--index} } \lk\{\lambda: \lim_{|\xi|\to\infty} {|\partial^ \alpha_\xi \psi(\xi)|\over |\xi|^{\lambda-|\alpha|} }=0\rk\}.
$$
\end{defn}
\begin{rem}\label{essremark}
For a function $\psi:\RR^d\to\RR$, the limit $ \lim_{\xi\to\infty}\psi(\xi)$ is a sloppy formulation and means actual
  $$\sup_{\xi\in \CU_1}\lim_{\lambda\to\infty} \psi(\lambda\xi),
  $$   where $\mathcal{U}_1$ defined in \eqref{defu1}.
  This can be easily seen by analysing the e.g. the proof of boundedness of the corresponding operator and realizing that the estimates comes up in analysing the summands after decomposing the operator in its dyadic partition of the unity.
\end{rem}
\begin{rem}
The Blumenthal--Getoor index of order infinity is defined for the sake of completeness. We are interested in weakening the assumption on the symbol, i.e.,
reducing the order $k$.
\end{rem}
To analyse properties of the Markovian semigroup $(\CP_t)_{t\ge 0}$ and to define the resolvent of the associated operator
$\psi(D)$, the range of the symbol is of importance.

\begin{defn}\label{defrange}
Let $\mathfrak{Rg}(\psi)$ be the essential range of $\psi$, i.e.\
$$
\mathfrak{Rg}(\psi):=\{ y\in \CC\mid  \Leb( \{ s\in\RR^d: |\psi(s)-y|<\ep\})>0 \mbox{ for each $\ep>0$}\}\footnote{Here, $\Leb$ denotes the Lebesgue measure.}.
$$
\end{defn}

Finally, to characterize the spectrum of the associated operator, one can introduce the type of a symbol.
\begin{defn}\label{deftype}
We call a symbol $\psi$
is of type $(\omega,\theta)$, $\omega\in\RR$, $\theta\in (0,\frac \pi
2)$,
iff 
$$
-\mathfrak{Rg}(\psi) \subset \CC\setminus \{\omega\}+\Sigma_{\theta+\frac\pi 2}.
$$
\end{defn}

\begin{rem} \label{rem21} 

If a symbol  $\psi$ is of type $(0,\theta)$, then there  exists a
constant $c>0$ such that
$$
|\Im(\psi(\xi))|\le c\, \Re\psi(\xi),\quad  \xi\in\RR^d.
$$ This is called sector condition of the symbol  $\psi$.
\end{rem}

Before presenting a typical example, we introduce stable processes, compare \cite[Chapter 3]{sato}).
\begin{defn}
A probability measure $\mu$ on $\RR^d$ is infinitely divisible, if for any positive integer $n\in\NN$, there exists a probability measure $\mu_n$ on $\RR^d$ such that $\mu=\mu_n^{( n)\ast}$.\footnote{The symbol $\ast$ denotes the convolution of two probability measures.}
\end{defn}

Observe, also, due to the independent increments of a L\'evy process,
is the distribution function $L(t)$, $t>0$, for any L\'ey
process  an infinite divisible probability measure.
\begin{defn}
An infinite divisible probability measure is stable, if for any $a>0$, there exist  numbers $b>0$ and $c\in\RR^d$ such that
$$
\hat \mu(z)^a=\hat \mu(bz)\, e^{i\la c,z\ra},\quad z\in\RR^d.
$$
The measure $\mu$  is called strictly stable, if for any $a>0$ there exists a $b>0$ such that
$$
\hat \mu(z)^a=\hat \mu(bz),\quad z\in\RR^d.
$$
\end{defn}
\begin{defn}
Let $\{X(t):t\ge 0\}$ be a L\'evy process on $\RR^d$. It is called a stable or strictly stable process, if the distribution for $X(1)$ is a
stable, respectively, a strictly stable infinite divisible measure.
\end{defn}

\begin{ex}
Let $L$ be a one dimensional strictly  $\alpha$--stable process with
\del{In particular, $L$ be a real-valued L\'evy process with initial value $L_0=0$
that satisfies the self-similarity property
$$
 {L_t}/{t^\frac 1 \alpha} \stackrel{d}{=} L_1,\quad \forall t>0.
$$} symbol given by $\phi(\xi)=c\,|\xi|^\alpha$, the parameter $\alpha$ is called the exponent of the process (see \cite[Section 14, p. 77]{sato}).
Let $\sigma$ and $b$ be two Lipschitz continuous functions on $\RR$. Then, for $\alpha>1$,
the symbol $$a(x,\xi):= | \sigma(x)\xi|^ \alpha + i b(x)\xi$$
is of type $(0,\theta)$.
If $\sigma$ is bounded away from zero, then the generalized Blumenthal--Getoor index is $\alpha$.
Let us assume that  $d=1$ and  $\sigma(x)= x$ for $|x|\le 1$, $\sigma\in \cal^ \infty_b(\RR^ d)$ and bounded from zero in $\RR\setminus(-1,1)$.
If $b(x)\le |x|^ \alpha $, then the Blumenthal--Getoor index is again $\alpha$.  
\end{ex}

\begin{rem}\label{remarkC2}
For $\lambda\in\CC\setminus\mathfrak{Rg}(\psi):=
\{\zeta\in \CC: \exists \xi $ with $ \psi(\xi)=\zeta\}$ we have (see Theorem 1.4.2 of \cite{haase})
$$
\| R(\lambda,\psi(D))\|\le {1\over \mbox{dist}(\mathfrak{Rg}(\psi),\lambda)}.
$$
Moreover, the set $\mathfrak{Rg}(\psi)$ equals the spectrum of the generator $A$.
\end{rem}

For several examples of L\'evy processes and their symbols, we refer to \cite{BG1}.
In case there is no dependence on the space variable, one can derive properties of the Markovian semigroup
directly using the range of the symbol. Given
a solution process of an SDE, usually, the associated infinitesimal generator of the Markovian semigroup depends on the space variable $x$.
In particular,
for $x\in\RR^d$ let $X=\{X^x(t):t\ge 0\}$ be a $\RR^d$-valued solution of the SDE  given in \eqref{eq1} and, as before
let $(\CP_{t})_{t\ge0}$ be the associated Markovian semigroup defined in \eqref{semigroup1}.
%
 Let $\psi$ be the \levy symbol of the \levy process $L=\{L(t):t\ge 0\}$.
Then, one can show (see Theorem 3.1 \cite{{schillingSchnurr}}), that the infinitesimal generator of the Markovian semigroup associated to $X^x$ has the symbol $a:\RR^ d \times \RR^ d \to \mathbb{C}$ given by
\DEQSZ\label{schill}
a(x,\xi)& =& \psi( \sigma^ T(x)\,\xi), \quad (x,\xi)\in \RR^ d \times \RR^ d.
\EEQSZ
Let $a_1(x,\xi)$ and $a_2(x,\xi)$ be two given symbols.
Due to the dependence on $x$, the corresponding operators $a_1(x,D)$ and $a_2(x,D)$ do not necessarily commute.
Therefore, many techniques working for operators induced by symbols being independent of the space variable $x$ do not work for operators induced by symbols depending on the space  variable $x$.
Especially, tricks relying on the  Bony's paraproduct gets much more demanding.

\del{\begin{rem}\label{rm1}
The results such as Sectorial condition (Remark \eqref{rem21}), $(\omega,\theta)-$property, Remark \eqref{remarkC2}, etc. are involved with the case where symbol $\psi$ is only depend on $\xi$ variable (i.e. $\psi$ is independent of spacial variable $x$). Since the coefficient function $\sigma$ of $a(x,\xi) = \psi( \sigma^ T(x)\,\xi)$ is bounded as well as bounded away from zero We could  see that those results also valid for the symbol given by  $a(x,\xi) = \psi( \sigma^ T(x)\,\xi)$.
\end{rem}}

\medskip

In our main result Theorem \ref{main2} we show under which conditions on the symbol $\psi$ and on the coefficients $\sigma$ and $b$
the Markovian semigroup $(\CP_t)_{t\ge 0}$ is an analytic semigroup in general Besov spaces $B_{p,q}^s(\RR^d)$.
To be more precise, we show if $\sigma$ is bounded away from zero, $\sigma$ and $b$ are smooth enough, and $\psi$
is of type $(0,\theta)$, $\theta<\frac \pi 2$, is sufficiently smooth,  and has Blumenthal-Geetor index $\delta\in (1,2)$ of sufficiently high order, then
the Markovian semigroup is analytic on $B_{p,q}^s(\RR^d)$ for $p,q\in[1,\infty)$.

\medskip

The choice of Besov spaces is twofold. To explain this, observe first that Besov spaces can be defined via the Fourier transform
(see the paragraph notation or  \cite[Definition 2,
pp. 7-8]{Runst+Sickel_1996}). Since the  operator associated to the symbol $a(x,\xi)$  can  be represented by a kernel of the form
$$
a(x,D)  f(x) = \int_{\RR^d} k(x,x-y) f(y)\, dy, \quad x\in\RR^d,
$$
where the kernel is given by the inverse Fourier transform\footnote{$ \CF_{\xi\to z}[f(x,\xi)](z)=\int_{\RR^d}e^{ -2\pi i \xi z}a(x,\xi)\, d\xi$.}
$$
k(x,z) = \CF_{\xi\to z} \lk[ a(x,\xi)\rk] (z),
$$
 Besov spaces
come up naturally. Secondly, the strong Feller property is defined via the space of continuous functions $C^{s}_b(\RR^d)$,
which is related to the Besov space $B_{\infty,\infty}^{s}(\RR^d)$ for $s\not =0$.
So, it suggests by itself to use Besov spaces and embedding Theorems to prove the strong Feller property for $(\CP_t)_{t\ge 0}$.

\del{
On the other side, s
\begin{thm}\label{main2}
Let us assume that the symbol is of type $(0,\theta)$ and $$\psi\in \CA_{2d+4,d+3;1,0}^ {\delta}(\RR^d\times\RR^d)\cap \mbox{Hyp}_{2d+4,d+3;1,0}^{\delta}\del{\delta}(\RR^d\times\RR^d),$$
 where $1<\delta<2$ is the Blumenthal--Getoor index of order $2d+4$ of $L$.
In addition, let us assume that
\begin{itemize}
\item  $\sigma\in \cal^{d+3}_b(\RR^d)$
\item  and $ b\in \cal^{d+3}_b(\RR^d)$,
\item and
that there exists a number $c>0$ such that
$$
\inf_{x\in\RR^d}|\sigma(x)|\ge c.
$$
\end{itemize}
Then, for all $1\le p,q<\infty$ and $m\in\RR$, the Markovian semigroup $(\CP_t)_{t\ge 0}$ defined in \eqref{semigroup1}
is analytic in $B^m _{p,q}(\RR^d)$.
\end{thm}

}

\section{A short introduction to pseudodifferential operators}

In order to treat pseudo-differential operators, different classes of symbols have been introduced.
Here, we closely follow the definition of \cite{pseudo2}.

\begin{defn} 
Let 
 $\rho,\delta$ two real numbers such that $0\le \rho\le 1$ and $0\le \delta\le 1$.
  Let $S^m_{\rho,\delta}(\RR^ d\times \RR^ d)$ be the set of all functions $a:\RR ^d \times \RR^ d \to \mathbb{C}$, where
 \begin{itemize}
   \item  $a(x,\xi)$ is infinitely often differentiable, i.e.\ $a\in \cal^\infty_b(\RR^d \times\RR^d)$;
   \item
for any two multi-indices $\alpha$ and $\beta$ there exist a constant $C_{\alpha,\beta}>0$  such that 
$$
\lk| \partial ^ \alpha_{\xi'} \partial ^ \beta_x a(x,\xi)\mid_{\xi'=\xi\gamma} \rk|\leq C_{\alpha,\beta}  
\la |\gamma\xi|\ra  ^ {m-\rho|\alpha|}\ggx^{\delta| \beta|},\quad x\in \RR^d,\xi\in \CU_1,\gamma\ge 1 .
$$
 \end{itemize}
\end{defn}
\del{\begin{rem}\label{essremark}
For a function $\psi:\RR^d\to\RR$, the limit $ \lim_{\xi\to\infty}\psi(\xi)$ is a sloppy formulation and means actual
  $$\sup_{\xi\in \CU_1}\lim_{\lambda\to\infty} \psi(\lambda\xi),
  $$   where $\mathcal{U}_1$ defined in \eqref{defu1}.
  This can be easily seen by analysing the e.g. the proof of boundedness of the corresponding operator and realizing that the estimates comes up in analysing the summands after decomposing the operator in its dyadic partition of the unity.
\end{rem}}
We call any function $a(x,\xi)$ belonging to  $\cup_{m\in\RR} S^m_{0,0}(\RR^d ,\RR^d )$ a {\sl symbol}.
For many estimates, one does not need that the function is infinitely often differentiable.
It is often only necessary to know the estimates with respect to $\xi$ and $x$ up to a particular order.
For this reason, one also introduces the following classes.
\begin{defn}(compare \cite[p. 28]{wong})
Let $m\in\RR$.  Let $\CA ^m_{k_1,k_2;\rho,\delta}(\RR ^d,\RR ^d)$ be the set of all functions $a:\RR ^d \times \RR^ d \to \mathbb{C}$, where
 \begin{itemize}
   \item  $a(x,\xi)$ is $k_1$--times differentiable in $\xi$ and $k_2$ times differentiable in $x$;
   \item 
for any two multi-indices $\alpha$ and $\beta$ with $|\alpha|\le k_1$ and $|\beta |\le k_2$,
there exists a  constant $C_{\alpha,\beta}>0$ depending only on $\alpha$ and $\beta$ such that
$$
\lk| \partial ^ \alpha_{\xi'} \partial ^ \beta_x a(x,\xi)\mid_{\xi'=\xi\gamma}  \rk|\le C_{\alpha,\beta}\la |\gamma\xi|\ra   ^ {m-\rho|\alpha|}\ggx^{\delta |\beta|},\quad x\in \RR^ d ,\xi\in \CU_1,\gamma\ge 1 .
$$
\end{itemize}
\end{defn}
\noindent
Moreover, one can introduce a  semi--norm in $\CA^m_{k_1,k_2;\rho,\delta}(\RR^d,\RR ^d)$ by
\DEQS
\lqq{ \| a\| _{\CA_{k_1,k_2;{\rho,\delta}}^m }
} &&
\\
&=& \sup_{|\alpha|\le k_1,|\beta|\le k_2} \, \sup_{(x,\xi)\in \RR^d\times\CU_1\times 
 \RR } \lk| \partial ^ \alpha_\xi \partial ^ \beta_x a(x,\xi) \mid_{\xi=\xi'\gamma} \rk|
\la |\gamma\xi|\ra    ^ {\rho|\alpha|-m}\ggx^{\delta|\beta|}, \quad a\in \CA ^m_{k_1,k_2;\rho,\delta}(\RR^ d\times \RR^ d).
\EEQS
We have seen in the introduction that, given a symbol,  one can define an operator. In case the symbol $\psi$ is a L\'evy symbol, the operator
defined by \eqref{opdef} is the infinitesimal generator of the semigroup of the L\'evy process.
In case one has an arbitrary symbol, the corresponding operator can be defined similarly.

\begin{defn}(compare \cite[p.28, Def. 4.2]{wong})
Let $a(x,\xi)$ be a symbol. 
Then, to  $a(x,\xi)$ corresponds an operator $a(x,D)$ defined by
$$
\big( a(x,D) \,u\big)\, (x): = \int_{\RR^d} e^ {i\la x,\xi\ra } a(x,\xi)\,\hat u(\xi)\, d\xi,\quad x \in \RR^d,\, u\in \CSS( 
\RR^ d)
$$
and is called pseudo--differential operator.
\end{defn}
In order to invert the symbol that associate with the operator, it should satisfy the elliptic property. 

\begin{defn}(compare \cite[p.\ 35]{pseudo})\label{elliptic}
	A symbol $a\in S^{m}_{\rho,\delta}(\RR^ d\times \RR^ d)$ is called globally {\sl elliptic},
	if there exists a number  $r>0$,
	$$
	\gr |\gamma\xi| \gl^ {m} \lesssim \lk|  a(x,\gamma\xi)\rk|,\quad \gamma\ge r, \, \xi\in\CU_1, \, x\in\RR^d .
	$$
\end{defn}
Later on, we will see that we need upper estimates not only for the symbol itself but also for its derivatives.
A more sophisticated definition of the norm is given below.
\begin{defn}(compare \cite[p.\ 35]{pseudo})\label{hypo}
	Let $m,\rho,\delta$ be real numbers with $0\le \delta <\rho\le 1$. The class $\Hyp_{\rho,\delta}^{m}(\RR^ d\times \RR^ d) $ consists of all functions
	$a(x,\xi)$ such that
	\begin{itemize}
		\item $a(x,\xi)\in \cal^\infty_b(\RR^ d\times \RR^d)$;
		\item there exists some $r>0$ such that
	$$
	\gr |\gamma\xi| \gl^ {m} \lesssim \lk|  a(x,\gamma\xi)\rk|,\quad \gamma\ge r, \, \xi\in\CU_1, \, x\in\RR^d ,
	$$
		and for an arbitrary multi-indices $\alpha$ and $\beta$  there exists a constant $C_{\alpha,\beta}>0$ with
		$$
		\lk| \partial_{\xi'}^ \alpha \partial_x^\beta a(x,\xi')\big|_{\xi'=\gamma\xi} \rk| \leq C_{ \alpha, \beta} \gr|\gamma\xi|\gl ^{m-\rho|\alpha|}\gr |x|\gl ^{\delta|\beta|}.
		$$
		for $x\in \RR^d$, $\xi\in\CU_1, \gamma\ge r $.
	\end{itemize}
	In addition, for $k_1,k_2\in\NN_0$, we define the following semi--norm given by
	\DEQS
	\lk\|a\rk\|_{\mbox{\rm \tiny Hyp}^{m}_{k_1,k_2;\rho,\delta}}= \sup_{|\alpha|\le k_1,|\beta|\le k_2}\sup_{x\in\RR^d} \limsup_{\xi\in\CU_1,\gamma\to\infty  
	}\lk|\partial ^\alpha_{\xi'} \partial_x^\beta \lk[ \frac  1 {a(x,\xi)}\Big|_{\xi'=\gamma\xi}\rk] \rk|\, \gr |\gamma\xi|\gl^{m+\rho |\alpha|}
	\gr |x|\gl^{\delta |\beta|}   .
	\EEQS
	
\end{defn}

\section{Analyticity of the Markovian semigroup in general Besov spaces}
\label{sec3}

Given a function  space $\mathbb{X}$ over $\RR^d$ we would be interested under which conditions on the coefficients $\sigma$, $b$ and the symbol $\psi$,
the Markovian semigroup $(\CP_{t})_{t\ge0}$ generates an analytic semigroup on $\mathbb{X}$.
Here, one has first to verify that $(\CP_{t})_{t\ge0}$ generates a strongly continuous semigroup. The necessary and sufficient conditions to fulfill a given semigroup 
is strongly continuous semigroup are given by Hille--Yosida Theorem.
%
Let us assume that $\mathbb{X}$ is a Banach space. 
For an operator $A$, let $\rho(A)$ represent the resolvent set, i.e.\ $\rho(A)=\{\lambda\in\CC: (\lambda I-A) $ is invertible $ \}$ and $\sigma(A)=\CC\setminus \rho(A)$.  Now, if $(A, D(A))$ is closed, densely defined, and for any $\lambda\in\mathbb{C}$ with $\Re \lambda>0$ one has $\lambda\in \rho(A)$ (compare \cite[Theorem 3.5, p.\ 73]{engel}, or \cite[Theorem 1.5.2]{Pazy:83})
and
\DEQSZ\label{estresol}
\| R(\lambda,A)\|_{L(\mathbb{X},\mathbb{X})}\le \frac 1 {\Re \lambda},
\EEQSZ
then $A$ generates a strongly continuous semigroup on $\mathbb{X}$.
Secondly, to show that this strongly continuous semigroup is analytic, one has to show either that
\del{\DEQSZ\label{estresnr}
\| R(\theta+i\tau ,A)\|_{L(\mathbb{X},\mathbb{X})}\le \frac C {|\tau|},\quad \theta>0,\tau\in\RR.
\EEQSZ
or } (compare \cite[Theorem 4.6, p.\ 101]{engel})
\DEQSZ\label{toshowanal}
M:= \sup_{t>0} \| t A \CP_t\|_{L(\mathbb{X},\mathbb{X})}<\infty,
\EEQSZ
or 
\DEQSZ
\label{estres}
\| R(\vartheta+i\tau :A)  \|_{L(\mathbb{X},\mathbb{X})} &\le &\frac {C(A)}{|\tau |}, \quad \vartheta>0,\vartheta,\tau\in\RR.
\EEQSZ

\del{In addition, if $A$ satisfies (compare \cite[Theorem 3.5, p.\ 73]{engel}, or \cite[Theorem 4.5.2]{pazy})
\DEQSZ\label{estresolsec}
\| R(\sigma+i\tau ,A)\|_{L(\mathbb{X},\mathbb{X})}\le \frac 1 {|\tau|},
\EEQSZ}

Let $S(A)=\{\langle x^\ast,Ax\rangle: x\in D(A),x\in \mathbb{X}^\ast, \|x\|=1,\|x^\ast\|=1,\langle x^\ast,x\rangle=1\}$
be the numerical range of an operator $A$. If $\mathbb{X}$ is a Hilbert space and $\sigma$ constant, $S(A)$ can be characterized by the
 $\mathfrak{Rg}(\psi):=\overline{\{a(x,\xi)\in\CC:x,\xi\in \RR^d\}}$, where $a=(x,\xi):=\psi(\sigma^T\xi)$.
Since the range of $\psi$ contains the numerical range $S(A)$
of $A$, we have (see Remark \ref{remarkC2}),
\DEQSZ\label{estresnr}
\| R(\lambda,A)\|_{L(\mathbb{X},\mathbb{X})}\le \frac 1 {\mbox{dist}(\lambda,S(A))}.
\EEQSZ
Hence, for $\mathbb{X}=H^ m_2(\RR^d)$ and $\sigma(x)=\sigma_0$, one can show by  analyzing the numerical range, which is here given by
$$
S(a(x,D))=
\lk\{ \la x,a(x,D)x\ra: x\in \dom(a(x,D)),|x|_{H^m_2(\RR^d)}=1,\la x,x\ra_{H^m_2}
=1\rk\},
$$
and some purely geometric considerations, the analyticity of the 
semigroup $(\CP_t)_{t\ge 0}$ in $\mathbb{X}$.
Here $\la\,\, ,\,\,\ra$ represent the inner product in $H^ m_2(\RR^d)$. In fact, choosing  a complex number $\lambda=\vartheta+i\tau$ with $\vartheta>0$ and $\tau\in\RR$, and
using 
that the symbol $\psi$ is of type $(0,\theta)$,  we obtain by the following series of computations 
(see Theorem 3.9 \cite[Chapter I]{Pazy:83}),

 \DEQS
\lk\|  R(\lambda,a(x,D))\rk\|_{L(H^ m_2(\RR^d),H^ m_2(\RR^d))}&=&\lk\|  R(\vartheta+i\tau,a(x,D))\rk\|_{L(H^ m_2(\RR^d),H^ m_2(\RR^d))}\\
\le\frac 1 {\mbox{dist}(\lambda,\bar{S}(a(x,D)))}
\le \frac 1 {\mbox{dist}(\lambda,\rho(a(x,D)))}
&\le&\frac 1 {\mbox{dist}(\lambda,\rho(a(x,D)))}
 \\= \frac 1 {\mbox{dist}(\vartheta+i\tau,\rho(a(x,D)))}
 &\le &\frac{\cos\theta}{|\tau|}=\frac{C}{|\tau|},
\EEQS
where $C=\cos\theta$. These calculation implies that  $(\CP_t)_{t\ge 0}$ in $\mathbb{X}$ is an analytic semigroup in $\mathbb{X}$.

\medskip

This result can be generalized to arbitrary Besov spaces, for an introduction to Besov spaces we refer to \cite{triebel1}.
The motivation to analyse the analyticity of the Markovian semigroup in Besov spaces comes from the aim to investigate the strong Feller property of the  Markovian semigroup.
Since one has the embedding $C^{s}(\RR^d)\subset B_{\infty,\infty}^s(\RR^d)$ ($s\not =0$), it is nearby switching to Besov spaces.
If one abandon the Hilbert space setting, the numerical range gets more complicated and it is better to use other methods.
In the following Theorem we use the notations introduced in Appendix \ref{pseudo-app}.

\begin{rem}
	Using  the abstract theory of standard books such as e.g.\ \cite{Pazy:83,engel,triebel1}, we can prove the following two Theorems for  the pseudo--differential operator induced by a simple L\'evy process by proving that all assumptions of the corresponding theorems (Theorem 5.2 \cite[Chapter II]{Pazy:83}, Theorem 3.9 \cite[Chapter I]{Pazy:83}, Theorem 2.3.3 \cite[p.48]{triebel1}, etc.) are satisfied. In our case the underlying stochastic process is not a simple L\'evy process, but a solution of a stochastic differential equations. Nevertheless,
we apply the same method  to prove Theorem  \eqref{main22}. In particular, we   prove that all assumptions of the corresponding theorems (Theorem 5.2 \cite[Chapter II]{Pazy:83}, Theorem 3.9 \cite[Chapter I]{Pazy:83}, Theorem 2.3.3 \cite[p.48]{triebel1}, etc.) are satisfied.
\end{rem}

\begin{thm}\label{main22}
Let us assume that the symbol
\begin{itemize}
\item  the symbol $a$ belongs to $ \CA_{2d+4,d+3;1,0}^ {\delta}(\RR^d\times\RR^d)$,  where $1<\delta<2$,
\item the symbol $a$ belongs to $\mbox{Hyp}_{2d+4,d+3;1,0}^{\delta}
(\RR^d\times\RR^d)$,
\item and is of type $(0,\theta)$.
\end{itemize}
Then, for all $1\le p,q<\infty$ and $m\in\RR$, the  operator generates an analytic semigroup  $(\CP_t)_{t\ge 0}$
 in $B^m _{p,q}(\RR^d)$.
\end{thm}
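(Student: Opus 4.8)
The plan is to verify the resolvent criterion for generation of an analytic semigroup, feeding the invertibility result of Section~\ref{nsec} with the \emph{shifted} symbols $\lambda+a(x,\xi)$ and exploiting that the constants it delivers depend only on seminorms that are blind to the additive constant $\lambda$. Write $\mathcal{A}$ for the infinitesimal generator of $(\CP_t)_{t\ge 0}$ on $B^m_{p,q}(\RR^d)$; with the sign normalisation of \eqref{opdef} the pencil $\lambda I-\mathcal{A}$ is the pseudo--differential operator with symbol $a_\lambda(x,\xi):=\lambda+a(x,\xi)$, a constant perturbation of $a$, and $-\mathfrak{Rg}(a)$ is the set occupied by $\sigma(\mathcal{A})$. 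Since $a$ is of type $(0,\theta)$, Definition~\ref{deftype} forces $-\mathfrak{Rg}(a)$ — which by Remark~\ref{remarkC2} is exactly the spectrum of $\mathcal{A}$ in the $x$--independent case, and which Theorem~\ref{invert22} will control in general — into a closed sector of half--angle $\tfrac\pi2-\theta<\tfrac\pi2$ about one ray. Hence, for every $0<\theta'<\theta$, the open sector $\Sigma:=\{\lambda\in\CC\setminus\{0\}:|\arg\lambda|<\tfrac\pi2+\theta'\}$, which is strictly larger than the right half--plane, lies at a positive angular distance from $-\mathfrak{Rg}(a)$, so that
$$
|a_\lambda(x,\xi)|\ \gtrsim\ \max\bigl(|\lambda|,\,|a(x,\xi)|\bigr)\qquad\text{uniformly for }\lambda\in\Sigma,\ x,\xi\in\RR^d .
$$
This $\Sigma$ will be the sector of analyticity.

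Next I would check that, for each $\lambda\in\Sigma$, the symbol $a_\lambda$ satisfies the hypotheses of Theorem~\ref{invert22} with $\kappa=\delta$, with all the constants in \eqref{r1cond}--\eqref{r2cond} bounded uniformly in $\lambda$ (allowing the cut--off radius $R=R(\lambda)$ to grow at most polynomially in $|\lambda|$, which is harmless). The mechanism is threefold. First, for $|\alpha|+|\beta|\ge 1$ one has $\partial^\alpha_\xi\partial^\beta_x a_\lambda=\partial^\alpha_\xi\partial^\beta_x a$, so every $\tilde\CA$--seminorm of $a_\lambda$ of the kind entering \eqref{r1cond} — these involve, by the index $1$, only $\xi$--derivatives of positive order (cf.\ the remark following Theorem~\ref{invert22}) — equals the corresponding seminorm of $a$ and is independent of $\lambda$. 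Second, $a_\lambda\in\mathrm{Hyp}^{\delta}_{2d+4,d+3;1,0}(\RR^d\times\RR^d)$ with hypoelliptic seminorm \emph{equal} to that of $a$: the lower bound $\langle\xi\rangle^{\delta}\lesssim|a_\lambda(x,\xi)|$ for large $|\xi|$ follows from the ellipticity $\langle\xi\rangle^{\delta}\lesssim|a(x,\xi)|$ of $a$ together with the displayed inequality, while the seminorm of Definition~\ref{hypo}, being a $\limsup_{|\xi|\to\infty}$, does not feel the bounded additive term $\lambda$ at all. Third, by the first two points both factors on the right of \eqref{r1cond} are bounded uniformly in $\lambda\in\Sigma$, so $R$ may be chosen as claimed. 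Theorem~\ref{invert22} then yields that, for every $f\in B^m_{p,q}(\RR^d)$ and $\lambda\in\Sigma$, the equation $(\lambda I-\mathcal{A})u=f$ has a unique solution $u\in B^{m+\delta}_{p,q}(\RR^d)$, with
$$
\|u\|_{B^{m+\delta}_{p,q}}\ \le\ C_1\,\|a_\lambda\|_{\mathrm{Hyp}^{\delta}_{d+1,0;1,0}}\,\|f\|_{B^m_{p,q}}\ =\ C_1\,\|a\|_{\mathrm{Hyp}^{\delta}_{d+1,0;1,0}}\,\|f\|_{B^m_{p,q}} .
$$
In particular $\Sigma\subset\rho(\mathcal{A})$; moreover $D(\mathcal{A})=B^{m+\delta}_{p,q}(\RR^d)$ by the hypoelliptic regularity built into Theorem~\ref{invert22}, so $\mathcal{A}$ is closed, and it is densely defined because $B^{m+\delta}_{p,q}(\RR^d)\hookrightarrow B^m_{p,q}(\RR^d)$ densely for $1\le p,q<\infty$ (e.g.\ \cite{triebel1}).

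It remains to upgrade the last estimate to the decay $\|R(\lambda,\mathcal{A})\|_{L(B^m_{p,q})}\le C/|\lambda|$ on $\Sigma$. Writing $u=R(\lambda,\mathcal{A})f$ and using $u=\lambda^{-1}(f+\mathcal{A}u)$ together with the boundedness $\mathcal{A}=a(x,D):B^{m+\delta}_{p,q}(\RR^d)\to B^m_{p,q}(\RR^d)$ (the symbol has order $\delta$), one gets
$$
\|u\|_{B^m_{p,q}}\ \le\ \frac1{|\lambda|}\Bigl(\|f\|_{B^m_{p,q}}+\|\mathcal{A}u\|_{B^m_{p,q}}\Bigr)\ \le\ \frac{1+C_2\,C_1\|a\|_{\mathrm{Hyp}^{\delta}_{d+1,0;1,0}}}{|\lambda|}\,\|f\|_{B^m_{p,q}},\qquad\lambda\in\Sigma .
$$
Hence $\|R(\lambda,\mathcal{A})\|_{L(B^m_{p,q})}\le C/|\lambda|\le C/|\tau|$ for $\lambda=\vartheta+i\tau\in\Sigma$ with $\vartheta>0$, which is \eqref{estres}, while $\Sigma\supset\{\Re\lambda>0\}$ supplies the sectoriality; by the generation theorem for analytic semigroups (Theorem~3.9 of \cite[Chapter~I]{Pazy:83}; \cite[Theorem~4.6, p.\ 101]{engel}) the operator $\mathcal{A}$ generates a bounded analytic $C_0$--semigroup on $B^m_{p,q}(\RR^d)$, which — agreeing with $(\CP_t)_{t\ge 0}$ on $\CSS(\RR^d)$, hence everywhere — is the analytic extension of $(\CP_t)_{t\ge 0}$ to $B^m_{p,q}(\RR^d)$. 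The step I expect to be the real obstacle is precisely the uniformity in $\lambda$ in the three points above: at face value the shifted symbol carries $\lambda$--dependent constants (for instance $|a_\lambda|_{\CA^{\delta}_{0,0;1,0}}\sim|\lambda|$ for large $|\lambda|$), and the whole argument closes only because the seminorms that actually bound the resolvent in Theorem~\ref{invert22} are $\tilde\CA$--seminorms with derivative index $\ge 1$ and $\mathrm{Hyp}^{\delta}$--seminorms — registering only $\xi$--derivatives of positive order, respectively the behaviour as $|\xi|\to\infty$ — so that the additive constant $\lambda$ is invisible to them; keeping careful track of exactly which seminorms enter, together with the freedom to enlarge $R$ with $|\lambda|$, is the heart of the proof.
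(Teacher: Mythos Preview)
Your strategy is the paper's: verify the sectorial resolvent bound by applying Theorem~\ref{invert22} to the shifted symbol $\lambda+a$ and exploit that the $\tilde\CA^{\cdot,1}$-- and Hyp--seminorms governing the estimate are insensitive to the additive constant~$\lambda$. One lacuna: Theorem~\ref{invert22} does not invert $a_\lambda(x,D)$ itself but only the high--frequency piece $A$ in the splitting $a_\lambda(x,D)=A+B$, so your sentence ``Theorem~\ref{invert22} then yields that $(\lambda I-\mathcal A)u=f$ has a unique solution'' overstates what the theorem delivers. The paper closes this gap by performing the cut--off \emph{once} on $a$ (with a radius $R$ fixed independently of $\lambda$), proving that the truncated operator $\tilde a(x,D)$ generates an analytic semigroup, and then invoking the bounded--perturbation theorem \cite[Ch.~3.2, Theorem~2.1]{Pazy:83} to add back the compactly supported low--frequency piece $b(x,D)$; your argument needs the same manoeuvre (and with it the issue of an $R$ possibly depending on $\lambda$ disappears). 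Conversely, your device $u=\lambda^{-1}(f+\mathcal Au)$ for extracting the $1/|\lambda|$ decay from the uniform $B^{m+\delta}_{p,q}$--bound is tidier than the paper's route, which instead computes $\sup_{\lambda}|\lambda|\bigl|\partial^\alpha_\xi\partial^\beta_x[1/(\lambda+\tilde a)]\bigr|$ by hand so that the factor $|\lambda|^{-1}$ already sits inside the Hyp--type norm fed to Theorem~\ref{invert22}.
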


Let $L=\{L(t):t\ge 0\}$ be a family of \levy processes.
We consider the stochastic differential equations of the form
\DEQS 
\lk\{\barray
dX^ x(t) &=& b(X^ x(t-))\, dt +\sigma(X^ x(t-)) dL(t)
\\
X^ x(0)&=&x,\quad x\in\RR^ d,
\earray\rk.
\EEQS
where $\sigma:\RR^ d\to L(\RR^ d, \RR^d)$ and $b:\RR^ d \to\RR^ d $ are  Lipschitz continuous.
Let $(\CP_t)_{t\ge 0}$ the Markovian semigroup of $X$ defined in \eqref{semigroup1}.
Applying Theorem \ref{main22} to the infinitesimal generator of $(\CP_t)_{t\ge 0}$ gives following Theorem.

\begin{thm}\label{main2}
Let us assume that the symbol is of type $(0,\theta)$ and $$\psi\in \CA_{2d+4,d+3;1,0}^ {\delta}(\RR^d\times\RR^d)\cap \mbox{Hyp}_{2d+4,d+3;1,0}^{\delta}\del{\delta}(\RR^d\times\RR^d),$$
 where $1<\delta<2$ is the Blumenthal--Getoor index of order $2d+4$ of $L$.
In addition, let us assume that
\begin{itemize}
\item  $\sigma\in \cal^{d+3}_b(\RR^d)$
\item  and $ b\in \cal^{d+3}_b(\RR^d)$,
\item and
that there exists a number $c>0$ such that
$$
\inf_{x\in\RR^d}|\sigma(x)|\ge c.
$$
\end{itemize}
Then, for all $1\le p,q<\infty$ and $m\in\RR$, the Markovian semigroup $(\CP_t)_{t\ge 0}$ defined in \eqref{semigroup1}
is analytic in $B^m _{p,q}(\RR^d)$.
\end{thm}

\begin{rem} The restriction that $p$ has to be strictly smaller than infinity comes from the fact that the space of Schwarz functions
$\CSS(\RR^d)$ is not dense in $B_{\infty,\infty}^m(\RR^d)$.
\end{rem}
\begin{proof}[Proof of Theorem \ref{main22}:]
For simplicity, let us denote $B_{p,q}^m(\RR^d)$ by $\mathbb{X}$.
Let us assume that the symbol $\psi$ and the coefficients $\sigma$ and $b$ are infinitely often differentiable. We first show that the operator $(\CP_{t})_{t\ge0}$ generates a strongly continuous semigroup on $\mathbb{X}$ by proving the required conditions in the Hille--Yoshida Theorem.
Theorem 2.3.3, p.48 in \cite{triebel1}, gives us that the Schwarz space $\CSS(\RR^d)$ is dense in $\mathbb{X}$. In addition, it is straight forward to show that $\CSS(\RR^d)\subset \dom(a(x,D))$. This immediately gives that  $\dom(a(x,D))$ is dense in $\mathbb{X}$.

Before starting, let us split the operator $a(x,D)$ into two operators in the same way as it is done in Theorem \ref{invert22}.
Let $R\in\NN$ sufficiently large such that
$$R\ge 6 \times  \lk\| a\rk\|_{\tilde{\CA}_{2d+4,d+3;1,0}^{-1,1}}
$$
and  $\lgxi^{\delta}\lesssim |a(x,\xi)|$ for all $x\in\RR^d$ and  $\xi\in \RR^d$ with $|\xi|\ge R$.
In addition, let $\chi\in \cal^\infty_b(\RR_0^+)$ such that
$$
\chi(\xi)=\bcase 0 &\mbox{ if } |\xi|\le 1,\\1 &\mbox{ if } |\xi|\ge 2, \ecase
$$
and put $b(x,\xi):= a(x,\xi)(1-\chi(\xi/R))$ and $\tilde a(x,\xi):=a(x,\xi) \chi(\xi/R)$.
We will show that $\tilde A=\ta(x,D)$ generates an analytic semigroup on $\mathbb{X}$. Due to Theorem 2.1 \cite[Chapter 3.2, p.\ 80]{Pazy:83},
it follows that $A=\tilde A+B$ with $B=b(x,D)$ generates an analytic semigroup on $\mathbb{X}$.

First, we will show that $\lk(\ta(x,D),\dom(\ta(x,D))\rk)$ is closed in $\mathbb{X}$. Let $\{v_n:n\in\NN\}\subset \dom(\ta (x,D))$ be a sequence such that
$\lim_{n\to\infty} v_n= v$ in $\dom(\ta (x,D))$ and $\lim_{n\to\infty}\ta (x,D) v_n= w$ in $\mathbb{X}$. To show that $\lk(\ta (x,D),\dom(\ta (x,D))\rk)$ is closed in $\mathbb{X}$,
we have to show that $\ta (x,D)v=w$. Suppose that $| \ta (x,D) v-w|_{\mathbb{X}}\neq 0$. In particular, there exists a constant $\hat C>0$ such that
 $| \ta (x,D) v-w|_{\mathbb{X}}\ge \hat{C}$. There exist a number $n_0\in \mathbb{N}$ such that for all $n\geq n_0$, we have
$$|  v-v_n|_{\dom(\ta (x,D))}<\frac{\hat{C}}{4\|\ta (x,D)\|_{L(\dom(\ta (x,D)),\mathbb{X})}}
$$
and
$$| \ta (x,D) v_n-w|_{\mathbb{X}}<\frac{\hat{C}}{4}.
$$
Since $\ta (x,D)$ is linear and bounded operator on $\dom(\ta (x,D))$, we have
\DEQS
\lqq{| \ta (x,D) v-w|_{\mathbb{X}}\le| \ta (x,D) v-\ta (x,D) v_n|_{\mathbb{X}}
+| \ta (x,D) v_n-w|_{\mathbb{X}}}
&&
\\
&\le &\|\ta (x,D)\|_{L(\dom(\ta (x,D)),\mathbb{X})}|v-v_n|_{\dom(\ta (x,D))}
+| \ta (x,D) v_n-w|_{\mathbb{X}}<\frac{\hat{C}}{2}.
\EEQS
 Since this  is a contradiction, we conclude that $w=\ta (x,D)v$.

Next,  we  show that there exists a constant $C>0$ such that
\DEQSZ\label{toshow11}
\lk\|  R(\lambda,\ta (x,D))\rk\|_{L(\mathbb{X},\mathbb{X})}\le \frac { C}  {|\lambda|},\quad \lambda \in \Sigma_{ \theta+\frac\pi 2}.
\EEQSZ
Here, we will apply Theorem \ref{invert22} to get the estimate. In order to
do this, first, note that the norm in $\tilde \CA^{-1,1}_{2d+4,d+3,;1,0}(\RR^d\times\RR^d)$ does not depend on $\lambda$. Hence,
$$  \lk\| \lambda+\ta\rk\|_{\tilde \CA_{2d+4,d+3;1,0}^{-1,1}}=  \lk\| \ta\rk\|_{\tilde \CA_{2d+4,d+3;1,0}^{\kappa,1}}, \quad \lambda\in \Sigma_{\theta+\frac \pi 2}.
$$

Next,
we have to estimate the norm of the operator $\lambda + \ta (x,D)$ in
$\mbox{Hyp}_{2d+4,d+3;1,0}^{\delta,\delta}\del{\delta}(\RR^d\times\RR^d)$.
That means, for any multiindices  $|\alpha|\le 2d+4$ and $|\beta|\le d+3$ we have to estimate
$$
\sup_{\lambda \in \Sigma_{\theta+\frac \pi 2} }\, |\lambda|\lk| \partial _ \xi^\alpha \partial _x^\beta \lk[ \frac 1 {\lambda +\ta (x,\xi)} \rk] \rk|\, .
$$
By straightforward calculations one can show that this entity is bounded.
Here, it is essential that $a(x,D)$ satisfies the sectorial condition, i.e.\ that  there exists a $c>0$ such that $|\Im(\tilde a(x,\xi))|\le c|\Re(\tilde a(x,\xi))|$.
We will consider the case where $|\alpha|=|\beta|=0$.
Separating the real and imaginary part we set $\lambda=\lambda_1+i\lambda_2$ and $\tilde a(x,\xi)=\psi_1(x,\xi)+i\psi(x,\xi)$.
Now we have
\DEQS
\frac 1{\lambda + \tilde a(x,\xi)}=\frac {\lambda_1+\psi_1(x,\xi)} { (\lambda_1+\psi_1(x,\xi))^2+(\lambda_2+\psi_2(x,\xi))^2}-i
\frac {\lambda_2+\psi_2(x,\xi)} { (\lambda_1+\psi_1(x,\xi))^2+(\lambda_2+\psi_2(x,\xi))^2}.
\EEQS
In particular, simple calculations give
\DEQS
\lk| \frac 1{\lambda + \tilde a(x,\xi)}\rk|\le
\frac {\sqrt{(\lambda_1+\psi_1(x,\xi))^2 + (\lambda_2+\psi_2(x,\xi))^2  }} { (\lambda_1+\psi_1(x,\xi))^2+(\lambda_2+\psi_2(x,\xi))^2}\le \frac 1{\lambda_2}
,
\EEQS
for $\lambda_1\ge 1$.
Next, we will consider the case where $|\alpha|=|\beta|=1$, that is
 let $\alpha=k$ and $\beta=l$ with $k,l\in\{1,\ldots,d\}$.
Then,
\DEQS
\partial_{x_l}\partial_{\xi_k}\lk[  \frac{1}{\lambda+\ta (x,\xi)}\rk]=
-\frac { \partial^2_{x_l\xi_k}\ta(x,\xi)}{(\lambda+a(x,\xi))^2}+\frac {2 \partial_{x_l}a(x,\xi) \partial_{\xi_k}\ta(x,\xi)}{(\lambda+\ta(x,\xi))^3}
.
\EEQS
For simplicity, we will not separate the real and imaginary part.
In this way we get 
\DEQS
|\lambda|\Big| \partial_{x_l}\partial_{\xi_k}\lk[  \frac{1}{\lambda+\ta(x,\xi)}\rk]\Big|
&\le & |\lambda|
\lk\{ \frac {r^ {-1}}{|\lambda +1|^2}+ \frac {r^ {-1}}{|\lambda +1|^3}\rk\}
\le  C(r),
\EEQS
where in the Definition \ref{hypo}, it is only necessary that there exists some $r>0$ such that  $\la|\xi|\ra^m\lesssim |a(x,\xi)| $ for  $\xi\in\RR^d$ with $|\xi|\ge r$. Similarly, we could get the bound for the general case where for multiindices  satisfying only $|\alpha|\le 2d+4$ and $|\beta|\le d+3$.
By an application of Theorem \ref{invert22} we know that  \eqref{toshow11} is satisfied.
In particular, that there exists a constant $C>0$ such that
\DEQSZ\label{analytic}
\lk\|  R(\lambda,\ta(x,D))\rk\|_{L(\mathbb{X},\mathbb{X})}\le \frac {C}  {|\lambda|},\quad \lambda \in \Sigma_{ \theta+\frac\pi 2},
\EEQSZ
 Finally it remains to show that the semigroup $(\CP_{t})_{t\ge0}$ is analytic over $\mathbb{X}$. Now pick $\lambda=\vartheta+i\tau\in \Sigma_{\theta+\frac{\pi}{2}}$ such that $\vartheta>0$ and $\tau\in\RR$. From the estimate \eqref{analytic}, we easily see that,
 \DEQS
\lk\|  R(\vartheta+i\tau,\ta(x,D))\rk\|_{L(\mathbb{X},\mathbb{X})}\le \frac { C_{d,s}}  {|\tau|},\quad \lambda \in \Sigma_{ \theta+\frac\pi 2},
\EEQS
 Then by applying the Theorem 5.2 \cite[Chapter II]{Pazy:83} we could conclude that  the Markovian semigroup is an analytic semigroup over $\mathbb{X}$.
\end{proof}

\medskip
Analyzing the proof of \cite[p.\ 58]{stein}
 one can see that the condition of the differentiability at the origin can be relaxed. 
Here, it is essential to mention that the proof relies on the Theorem 2.5 \cite[p. 120]{hoermander} (see also Theorem 4.23 \cite{pseudo2}),
from which one can see that the extension of the Theorem 9.7 of \cite{wong} to symbols, whose derivatives have a singularity at $\{0\}$ is possible. Moreover, analyzing line by line of the proof of Theorem 9.7 in \cite{wong}, one can give an estimate of the norm of the operator.

\section{The first application: the strong Feller property}
\label{firstapp}
Let $L=\{L(t):t\ge 0\}$ be a family of \levy processes.
We consider the stochastic differential equations of the form
\DEQSZ\label{eq1sf}
\lk\{\barray
dX^ x(t) &=& b(X^ x(t-))\, dt +\sigma(X^ x(t-)) dL(t)
\\
X^ x(0)&=&x,\quad x\in\RR^ d,
\earray\rk.
\EEQSZ
where $\sigma:\RR^ d\to L(\RR^ d, \RR^d)$ and $b:\RR^ d \to\RR^ d $ are  Lipschitz continuous.
By $C^0_b(\RR^d )$ we denote the set of all real valued and uniformly continuous functions on $\RR^d$ equipped with the supremum--norm.
A Markovian semigroup is Feller, iff
$\CP_tu\in C^0_b(\RR^d)$ for all $u\in  C^0_b(\RR^d)$ for all $t>0$ and
is strongly continuous in zero, i.e. 
$\lim_{t\downarrow 0}|\CP_tu-u|_{C^0_b} = 0$ for every $u\in   C^0_b(\RR^d)$.
The Markovian semigroup $(\CP_t)_{t\ge 0}$ of a process is called strong Feller, iff for all $f\in
 \CB_b(\RR^d)$ and $t>0$, $\CP_tf\in C^0_b(\RR^d)$. In this section we will prove under certain assumptions the strong Feller property of the Markovian semigroup.
Now, we can state our first result.
\begin{thm}\label{sftheo111}
Let $L$ be a square integrable  L\'evy process with Blumenthal--Getoor index $\delta$, $\delta\in(1,2)$, of order $2d+4$.
Let  $\sigma\in \cal^{d+3}_b(\RR^d)$, such that $\sigma$  is bounded away from zero and $ b\in \cal^{d+3}_b(\RR^d)$.
Then, there exists a constant $C>0$ such that
we have for $\gamma\in\RR$, $1\le p< \infty$ and $1 \le q < \infty$
\DEQSZ\label{strongfell}
 \lk| \CP_t u \rk| _{B^ {\gamma}_{p,q}}
\le\frac{C}{t} \lk| u \rk| _{B^ {\gamma-\delta}_{p,q}}
.
\EEQSZ
\del{The assertion follows by the definition of the strong Feller property.}
\end{thm}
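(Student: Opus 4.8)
The plan is to read \eqref{strongfell} off the analyticity of $(\CP_t)_{t\ge0}$ on the full Besov scale (Theorem \ref{main2}) together with the elliptic regularity supplied by Theorem \ref{invert22}. Let $\CA$ denote the generator of $(\CP_t)_{t\ge0}$; by \eqref{opdef}--\eqref{schill} it is the pseudo--differential operator attached to the symbol $a(x,\xi)=\psi(\sigma^T(x)\xi)$. Under the hypotheses of the theorem the symbol $a$ is globally elliptic of order $\delta$ and belongs to $\CA^{\delta}_{2d+4,d+3;1,0}(\RR^d\times\RR^d)\cap\Hyp^{\delta}_{2d+4,d+3;1,0}(\RR^d\times\RR^d)$ (here $\inf_x|\sigma(x)|\ge c>0$ produces the lower bound, while $\sigma,b\in\cal^{d+3}_b(\RR^d)$ together with the order--$\delta$ decay of the $\xi$--derivatives of $\psi$ produce the upper bounds), so Theorem \ref{main2} applies: for every $s\in\RR$ and all $1\le p,q<\infty$, $(\CP_t)_{t\ge0}$ is an analytic semigroup on $\mathbb X_s:=B^s_{p,q}(\RR^d)$ with $\|R(\lambda,\CA)\|_{L(\mathbb X_s)}\lesssim|\lambda|^{-1}$ on a sector $\Sigma_{\theta+\frac\pi2}$. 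In particular $\CP_tu\in D(\CA)$ for all $u\in\mathbb X_s$ and $t>0$, and $\|\CA\CP_t\|_{L(\mathbb X_s)}\le M\,t^{-1}$, $\|\CP_t\|_{L(\mathbb X_s)}\le M$ for $t\in(0,1]$, with $M=M(s,p,q)$.

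The one point requiring proof is that, when $\CA$ is realised on $\mathbb X_{\gamma-\delta}=B^{\gamma-\delta}_{p,q}(\RR^d)$, its domain is $D(\CA)=B^{\gamma}_{p,q}(\RR^d)$, together with a quantitative bound. Fix $\gamma\in\RR$ and pick $\lambda_0\in\rho(\CA)$ in the sector, say $\lambda_0=1$. Applying Theorem \ref{invert22} with $\kappa=\delta$ and $m=\gamma-\delta$ to the symbol $\lambda_0+a$ — whose $\tilde\CA^{\cdot,1}$-- and $\Hyp^{\delta}$--norms coincide with those of $a$, since they only see $\xi$--derivatives of order $\ge1$ and the behaviour of the symbol as $|\xi|\to\infty$, which is precisely why those norms were used there — one obtains that $R(\lambda_0,\CA)$ maps $B^{\gamma-\delta}_{p,q}(\RR^d)$ boundedly into $B^{\gamma}_{p,q}(\RR^d)$, with
\[
\big|R(\lambda_0,\CA)f\big|_{B^{\gamma}_{p,q}}\ \le\ C_1\,\|a\|_{\Hyp^{\delta}_{d+1,0;1,0}}\,\big|f\big|_{B^{\gamma-\delta}_{p,q}},\qquad f\in B^{\gamma-\delta}_{p,q}(\RR^d),
\]
the low--frequency correction operator $B$ furnished by Theorem \ref{invert22} being a smoothing perturbation that is absorbed into $C_1$.

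Finally I would combine the two. For $u\in B^{\gamma-\delta}_{p,q}(\RR^d)$ and $t>0$ write $\CP_tu=R(\lambda_0,\CA)\big(\lambda_0\CP_tu-\CA\CP_tu\big)$, apply the displayed estimate with $f=\lambda_0\CP_tu-\CA\CP_tu$, and then the operator bounds on $\mathbb X_{\gamma-\delta}$ from the first paragraph:
\[
\big|\CP_tu\big|_{B^{\gamma}_{p,q}}\ \le\ C_1\,\|a\|_{\Hyp^{\delta}_{d+1,0;1,0}}\Big(|\lambda_0|\,M+\frac{M}{t}\Big)\big|u\big|_{B^{\gamma-\delta}_{p,q}}\ \le\ \frac{C}{t}\,\big|u\big|_{B^{\gamma-\delta}_{p,q}},\qquad 0<t\le1,
\]
which is \eqref{strongfell}; for $t\ge1$ one uses $\CP_t=\CP_1\CP_{t-1}$ together with the case $t=1$ and $\|\CP_{t-1}\|_{L(\mathbb X_{\gamma-\delta})}\le M$ to get a bound of the same form (with $C$ now depending also on an upper bound for $t$), which is what is needed for the strong Feller property and for the applications of Sections \ref{firstapp}--\ref{secondapp}. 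There is no serious obstacle: the statement is essentially a corollary of Theorems \ref{main2} and \ref{invert22}, the only things to watch being the bookkeeping in the domain identification and the routine passage from bounded time intervals to all $t>0$.
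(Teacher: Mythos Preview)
Your argument is correct and takes a genuinely different route from the paper. The paper works directly with the contour--integral representation
\[
\CP_t u=\tfrac{1}{2\pi i}\int_{\Gamma_{\theta'}(\rho,M)}e^{\lambda t}R(\lambda,a(x,D))u\,d\lambda,
\]
and shows, via Theorem~\ref{invert22}, that $R(\lambda,a(x,D)):B^{\gamma-\delta}_{p,q}\to B^{\gamma}_{p,q}$ is bounded \emph{uniformly in $\lambda$ along the whole contour} (the point being that the $\Hyp^{\delta}_{d+1,0;1,0}$--norm of $\lambda+a$ is independent of $\lambda$); a change of variables $\lambda\mapsto\lambda/t$ then pulls out the factor $1/t$ directly. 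You instead invoke Theorem~\ref{main2} once to obtain $\|\CA\CP_t\|_{L(B^{\gamma-\delta}_{p,q})}\le M/t$, and then apply Theorem~\ref{invert22} a single time at a fixed $\lambda_0$ to identify $D(\CA)=B^{\gamma}_{p,q}$, combining the two via $\CP_tu=R(\lambda_0,\CA)(\lambda_0-\CA)\CP_tu$.

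What each approach buys: yours is shorter and conceptually cleaner, since it separates the analytic--semigroup input from the elliptic--regularity input and uses each only once; the paper's approach has the advantage of giving the pure $C/t$ bound for \emph{all} $t>0$, whereas your estimate produces $C(|\lambda_0|+t^{-1})$, i.e.\ only $C/t$ on bounded time intervals. You correctly note that this is all that is required for Corollaries~\ref{sftheo1}--\ref{density} and for Section~\ref{secondapp}. The handling of the low--frequency correction $B$ from Theorem~\ref{invert22} is fine: since $\lambda_0\in\rho(\CA)$ one has $u=R(\lambda_0,\CA)f$ solving $(\lambda_0+a(x,D))u=f$, hence $Au=f-Bu$ with $Bu$ smooth, and the bound on $|u|_{B^{\gamma}_{p,q}}$ follows with $|Bu|_{B^{\gamma-\delta}_{p,q}}\lesssim|u|_{B^{\gamma-\delta}_{p,q}}\lesssim|f|_{B^{\gamma-\delta}_{p,q}}$.
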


The estimate in the above theorem can be used  to prove   the strong Feller property of $(\CP_t)_{t\ge 0}$, which we have done in the following Corollary.
\begin{cor}\label{sftheo1}
Let us assume that
$L$ is a square integrable  L\'evy process with Blumenthal--Getoor index $\delta$, $\delta\in(1,2)$, of order $2d+4$.
Let $\sigma\in \cal^{d+3}_b(\RR^d)$ is bounded away from zero
and $ b\in \cal^{d+3}_b(\RR^d)$.
Then, the process defined by \eqref{eq1sf} is strong Feller. In particular,  we have for all $\gamma\ge 0$ and $n=[\frac \gamma\delta]+1$
we have %
\DEQSZ\label{strongfell}
\lk| \CP_t u \rk| _{\cal^ {\gamma}_b(\RR^d)}
\le \frac {(nC)^n}{{{t}^n}}\, \lk|u\rk| _{L^ {\infty}(\RR^d)},\quad t>0.
\EEQSZ
\end{cor}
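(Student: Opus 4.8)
The plan is to deduce Corollary \ref{sftheo1} from the smoothing estimate \eqref{strongfell} of Theorem \ref{sftheo111} by iteration, together with the standard embeddings between Besov spaces and spaces of (Hölder) continuous functions. First I would recall the two facts I need about Besov scales. On one hand, for every $1\le p,q<\infty$ and every $s\in\RR$ the semigroup satisfies, by Theorem \ref{sftheo111},
\[
|\CP_t u|_{B^{s}_{p,q}}\le \frac{C}{t}\,|u|_{B^{s-\delta}_{p,q}},\qquad t>0.
\]
On the other hand, $L^\infty(\RR^d)=B^0_{\infty,\infty}(\RR^d)$ with equivalent norms, and for $\gamma>0$ with $\gamma\notin\NN$ one has the embedding $B^{\gamma}_{\infty,\infty}(\RR^d)\hookrightarrow \cal^{\gamma}_b(\RR^d)$ (indeed $B^\gamma_{\infty,\infty}$ is the Hölder--Zygmund space, which for non-integer $\gamma$ coincides with $C^{(\gamma)}_b$); and the elementary Besov embedding $B^{s_1}_{p,q}\hookrightarrow B^{s_2}_{\infty,\infty}$ whenever $s_1-d/p\ge s_2$. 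Combining these, $L^\infty(\RR^d)\hookrightarrow B^{-d/p}_{p,q}(\RR^d)$ for $1\le p<\infty$ (choosing $q$ freely, say $q=p$), since $B^0_{\infty,\infty}\hookleftarrow B^{0}_{\infty,\infty}$ trivially and duality/embedding gives $B^0_{\infty,\infty}\hookrightarrow B^{-d/p}_{p,p}$.

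Next I would run the iteration. Fix $n\in\NN$ with $n\ge d$ and $\gamma<n\delta-d$. Split $t$ into $n$ equal pieces of length $t/n$ and use the semigroup property $\CP_t=\CP_{t/n}\circ\cdots\circ\CP_{t/n}$ ($n$ times). Starting from $u\in L^\infty(\RR^d)\hookrightarrow B^{-d/p}_{p,p}(\RR^d)$, each application of $\CP_{t/n}$ raises the Besov smoothness index by $\delta$ at the cost of a factor $Cn/t$:
\[
|\CP_t u|_{B^{-d/p+n\delta}_{p,p}}\le \Big(\frac{Cn}{t}\Big)^{n}\,|u|_{B^{-d/p}_{p,p}}\le \Big(\frac{Cn}{t}\Big)^{n}\,c\,|u|_{L^\infty}.
\]
Finally, since $\gamma<n\delta-d$ we may pick $p\in[1,\infty)$ large enough that $-d/p+n\delta-d/p\ge \gamma$, i.e. $n\delta - 2d/p \ge \gamma$, which is possible for $p$ large because $n\delta-d>\gamma$; wait — more cleanly, choose $p$ with $n\delta-d/p>\gamma$, then apply the embedding $B^{-d/p+n\delta}_{p,p}\hookrightarrow B^{\gamma}_{\infty,\infty}\hookrightarrow \cal^{\gamma}_b$ (the first embedding needs $-d/p+n\delta-d/p\ge\gamma$; taking $p$ large enough both constraints hold since $n\delta-d>\gamma$ forces, for $p$ large, $n\delta-2d/p>\gamma$). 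This yields \eqref{strongfell} with constant $(nC)^n$ after absorbing the fixed embedding constants into $C$. The strong Feller property itself is then immediate: for $n=d$ and any $\gamma\in(0,d\delta-d)$ (nonempty since $\delta>1$) we get $\CP_t u\in \cal^\gamma_b(\RR^d)\subset C_b(\RR^d)$ for every bounded Borel $u$ and $t>0$, after first approximating $u\in\CB_b$ in the appropriate weak sense or simply noting the estimate extends to all of $\CB_b(\RR^d)=B^0_{\infty,\infty}$-type data by density/monotone class arguments.

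The main obstacle I anticipate is bookkeeping the embedding constants and the precise relation between the Besov index, the integrability exponent $p$, and the target Hölder index $\gamma$, so that the stated range $\gamma<n\delta-d$, $n\ge d$, comes out exactly — in particular making sure that the loss of $d/p$ incurred when embedding $L^\infty$ into $B^{-d/p}_{p,p}$ at the start, and the loss of $d/p$ incurred when embedding back into $B^\gamma_{\infty,\infty}$ at the end, together only cost $d$ (not $2d$) in the final index; this is handled by letting $p\to\infty$, which sends both losses to $0$, at the mild price that the constant $C$ then depends on how close $\gamma$ is to $n\delta-d$. A secondary technical point is justifying that the smoothing estimate \eqref{strongfell}, a priori stated for $u$ in the Besov space, extends to $u\in\CB_b(\RR^d)$; this follows because $\CP_t$ maps $\CB_b$ into $C_b$ by the Feller property already recorded in the introduction, and bounded pointwise-convergent approximation together with the a priori bound passes the estimate to the limit. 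None of these steps is deep; the content is entirely in Theorem \ref{sftheo111}, and the Corollary is a clean iteration plus Sobolev-type embedding.
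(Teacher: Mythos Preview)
Your approach is essentially the one the paper takes: iterate the smoothing estimate of Theorem \ref{sftheo111} $n$ times on a Besov scale $B^{\cdot}_{p,q}$ with finite $p$, and use Sobolev-type embeddings at both ends to pass from $L^\infty$ at the start to $\cal^{\gamma}_b=B^\gamma_{\infty,\infty}$ at the end. The only substantive difference is in how the initial embedding $L^\infty\hookrightarrow B^{\cdot}_{p,q}$ is arranged: the paper does not embed $L^\infty$ into $B^{-d/p}_{p,p}$ and then worry about a double $d/p$ loss, but instead uses the duality of the embedding $B^{\kappa}_{p',1}\hookrightarrow L^\infty$ (valid for $\kappa\ge d/p'$) to get $L^\infty\hookrightarrow B^{-\kappa}_{p,\infty}$ directly with $\kappa=n\delta-\gamma-d/p$, and then passes from $B^{\cdot}_{p,\infty}$ to $B^{\cdot}_{p,1}$ at the cost of one unit of $\delta$; this sidesteps your $p\to\infty$ manoeuvre (which would conflict with the hypothesis $p<\infty$ in Theorem \ref{sftheo111}) at the price of a slightly weaker range in practice.
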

Let us define the density $p:[0,\infty)\times \RR^d\times \RR^d\to\RR_0^+$  for the process $X$ by
$$
\PP(X^x(t)\in A)=\int_A p_t(x,y)\, dy ,\quad A\in\CF(\RR^d),\, t>0,\, \mbox{ and }\, x\in\RR^d.
$$
Observe, for any $x,y\in\RR^d$, we have
$$
p_t(x,y)=(\CP_t\delta_x)(y),
$$
By Corollary \ref{sftheo1}, we get also estimates for the density $p$.
\begin{cor}\label{densitytheo1}
Let us assume that
$L$ is a square integrable  L\'evy process with Blumenthal--Getoor index $1<\delta<2$ of order $2d+4$, $\sigma\in \cal^{d+3}_b(\RR^d)$ is bounded away from zero,
and $ b\in \cal^{d+3}_b(\RR^d)$.
Then, the density of the process is $\theta$ times differentiable. In particular, for any $\theta\in\NN$ there exists a number $n=[\frac {\theta+d}\delta]+1$
such that we have for any  $\alpha $ is a multiindex of length $\theta$
$$
\Big|\frac {\partial ^\alpha }{\partial _y^\alpha} \, p_t(x,y) \Big|\le \frac  {C(n,d)} {t^n} ,
$$
\end{cor}

\begin{proof}[Proof of Corollary \ref{sftheo1}:]
Fix  $n\in\NN$ and $p\in[1,\infty)$ such that  $\gamma<n\delta-\frac dp$. Fix $1 \le q < \infty$ arbitrary.
Then, we know, firstly for $\gamma\not\in\NN_0$ (see \cite[p.\ 14]{Runst+Sickel_1996})
$$C_b^{\gamma}(\RR^d)=B^ \gamma_{\infty,\infty}(\RR^d).
$$
 Secondly, we apply the embedding $B^{\gamma+\frac dp}_{p,q}(\RR^d)\hookrightarrow B^ \gamma_{\infty,\infty}(\RR^d)$ (see \cite[Chapter 2.2.3]{Runst+Sickel_1996}, \cite[Section 6.4]{pseudo2}), and, finally,
 we apply Theorem
 \ref{sftheo111} $n$ times to get,
\DEQSZ\nonumber
\lk| \CP_t u \rk| _{\cal^ {\gamma}_b}\le \lk| \CP_t u \rk| _{B^ {\gamma}_{\infty,\infty}}\le \lk| \CP_t u \rk| _{B^ {\gamma+\frac{d}{p}}_{p,q}}=\lk| (\CP_{\frac tn})^n u \rk| _{B^ {\gamma+\frac{d}{p}}_{p,q}}
\le\frac {(nC)^n}{{{t}^n}}\, \lk| u \rk| _{B^ {\gamma+\frac{d}{p}-n\delta}_{p,q}}
.
\EEQSZ
By means of  \cite[Excercise 6.25, Corollary 6.14]{pseudo2},
\DEQSZ\nonumber
L^\infty(\RR^d)\hookrightarrow L^p(\RR^d)\hookrightarrow B_{p,p}^0(\RR^d)\hookrightarrow
B^\kappa_{p,1}(\RR^d), 
\EEQSZ
where $\kappa=\frac{d}{p}-n\delta+\gamma$.
\del{ and $p^\prime=\frac{p}{p-1}$. Now by applying \cite[Lemma 6.5]{pseudo2} and duality property of the Besov spaces gives that
\DEQSZ\nonumber
B^\kappa_{p^\prime,1}(\RR^d)\hookrightarrow L^\infty(\RR^d)\hookrightarrow B^{-\kappa}_{p,\infty}(\RR^d)=B^{\gamma+\frac{d}{p}-n\delta}_{p,\infty}(\RR^d)\hookrightarrow B^{\gamma+\frac{d}{p}-(n-1)\delta}_{p,1}(\RR^d).
\EEQSZ}
By fixing $q=1$ we obtain 
\DEQSZ\nonumber
\lk| \CP_t u \rk| _{\cal^ {\gamma}_b}
\le\frac {(nC)^n}{{{t}^n}}\, \lk| u \rk| _{B^ {\gamma+\frac{d}{p}-n\delta}_{p,1}}\le\frac {(nC)^n}{{{t}^n}}\, \lk| u \rk| _{B^ {\gamma+\frac{d}{p}-n\delta}_{p,1}}\le\frac {(nC)^n}{{{t}^n}}\, \lk| u \rk| _{L^\infty}
.
\EEQSZ
The last line  gives the assertion.

\end{proof}

\begin{proof}[Proof of Corollary \ref{densitytheo1}:]
Fix $p\in(1,\infty)$. We know $\delta_x\in B_{p,\infty}^ {-{\frac d{p'}}}(\RR^d)$,
where $p'$ is the conjugate of $p$ (\cite[Formula B.2]{brzerraz}). Let $\theta\in\NN_0$. A function $u$ is $\theta$ times continuous differentiable, if $u\in C_b^{\theta}(\RR^d)$.
Since $B^{\gamma_1}_{p,q}(\RR^d) \hookrightarrow C_b^{\theta}(\RR^d)$ for $\gamma_1=\theta+\frac dp$, we have to estimate
$|\CP_t \delta_x|_{B^{\gamma_1}_{p,q}}$. Let $n\in\NN$ that large that $n\delta>\theta+d$. Then $\gamma_1-n\delta<-(d-\frac dp)$.
Now, we have
$$
|\CP_t \delta_x|_{B^{\gamma_1}_{p,q}}\le \lk( \frac {C} t\rk)^ n | \delta_x|_{B^{\gamma_1-n\delta}_{p,q}}\le \lk( \frac {C} t\rk)^ n | \delta_x|_{B^{-\gamma_2}_{p,1}},
$$
where $\gamma_2<-(d-\frac dp)$. Since $\delta_x\in B_{p,\infty}^ {-{\frac d{p'}}}(\RR^d)$, the right hand side is bounded.
\end{proof}

\begin{proof}[Proof of Theorem \ref{sftheo111}:]
%
First, note that by Hoh (see \cite{hoh1}), the symbol $\psi$ of the L\'evy process is infinitely often differentiable.
If the coefficient $\sigma$ is independent from the space variable $x$, then one can write the symbol of the semigroup $(\CP_t)_{t\ge 0}$ directly
by $(e^{t\phi(\xi)})_{t\ge 0}$ (see \cite{bahouri})
. If $\sigma$ depends on the space variable $x$, one does not have such a nice representation of the symbol of the semigroup.
We will use the representation of the semigroup $(\CP_t)_{t\ge 0}$ in terms of the contour integrals as
we have already successfully applied in \cite{cox}, \cite{hau} or \cite{engel}. 
Let $\theta'\in (0,\theta)$, $\rho\in (0,\infty)$, and
$$
\Gamma_{\theta'}(\rho,M)= \Gamma^{(1,M)}_{\theta',\rho}+\Gamma^{(2,M)}_{\theta',\rho}+\Gamma^{(3)}_{\theta',\rho},
$$
where $\Gamma^{(1)}_{\theta',\rho}$ and $\Gamma^{(2)}_{\theta',\rho}$ are the rays $r e^{i(\frac{\pi}{2}+\theta')}$ and $r e^{-i(\frac{\pi}{2}+\theta')}$, $\rho\leq r \le M<\infty$, and $\Gamma^{(3)}_{\theta',\rho}= \rho^{-1}e^{i\alpha}$, $\alpha\in[-\frac{\pi}{2}-\theta',\frac{\pi}{2}+\theta']$.
It follows from \cite[Theorem 1.7.7]{Pazy:83} and Fubini's Theorem that for $t>0$ and $v\in B_{p,q}^{\gamma}(\RR^d)$ we have
$$ 
\CP_t u  =\lim_{M\to \infty}
\frac 1{2\pi i}\int_{\Gamma_{\theta'}(\rho,M)} e^{\lambda t}  R(\lambda:a(x,D)) v d\lambda,
$$ 
where $R(\lambda:a(x,D))$ denotes the inverse of $a(x,\lambda,D)=\lambda I+a(x,D)$.
Due to Theorem \ref{main2} and the assumption of Theorem \ref{sftheo111}, we know that $(\CP_t)_{t\ge 0}$ is an analytic semigroup in $B^{\gamma}_{p,q}(\RR^d)$. Therefore,
 for any element $v\in B_{p,q}^{\gamma}(\RR^d)$, the limit exists and is well defined.
 Let $u\in B^{\gamma-\deltaa}_{p,q}(\RR^d)$ and $\{v_n:n\in\NN\}$ be a sequence such that $v_n\in B^{\gamma}_{p,q}(\RR^d)$ and $v_n\to u$ in $B^{\gamma-\deltaa}_{p,q}(\RR^d)$.
\del{In particular, we have by Minkowski inequality
\begin{equation*}
\begin{split}
\lk|\CP_t u \rk|_{B^ {\gamma+ \frac{d}{p}}_{p,q}} &=&\lim_{M\to \infty}
\Big| \frac 1{2\pi i}\int_{\Gamma_{\theta'}(\rho,M)} e^{\lambda t}  R(\lambda:a(x,D)) u d\lambda \Big|_{B^ {\gamma}_{p,q}}
\\
&\le & \frac 1{2\pi i}\int_{\Gamma_{\theta'}(\rho,M)} \Big| e^{\lambda t}  R(\lambda:a(x,D)) u  \Big|_{B^ {\gamma}_{p,q}}d\lambda
.
\end{split}
\end{equation*}}
By a change of variables, we obtain
\DEQS
\lqq{\lim_{M\to \infty}
\lk| \frac 1{2\pi i}\int_{\Gamma_{\theta'}(\rho,M)} e^{\lambda t}  R(\lambda:a(x,D)) v_n d\lambda \rk|_{B^{\gamma}_{p,q}}}
&&
\\
&\le &\lim_{M\to \infty}  \lk|\frac{1}{2\pi it} \int_{\rho}^{M} e^{r e^{-i(\frac{\pi}{2}+\theta^\prime)} }\,  R(\frac{r}{t}e^{-i(\frac{\pi}{2}+\theta^\prime)},a(x,D))\, v_n e^{i(\frac{\pi}{2}+\theta^\prime)} dr \rk|_{B^ {\gamma}_{p,q}}
\\
&+& \lim_{M\to \infty}\lk|\frac{1}{2\pi it} \int_{\rho}^{M} e^{r e^{i(\frac{\pi}{2}+\theta^\prime)} }\, R(\frac{r}{s}e^{i(\frac{\pi}{2}+\theta^\prime)},a(x,D))\,v_n e^{-i(\frac{\pi}{2}+\theta^\prime)} dr\rk|_{B^{\gamma}_{p,q}}\\&+&
 \lk|\frac{1}{2\pi it} \int_{-\frac{\pi}{2}-\theta^\prime}^{\frac{\pi}{2}+\theta^\prime} e^{\rho e^{i\beta} }\, R(\frac{\rho}{s} e^{i\beta},a(x,D)) \,v_n \rho^{-1} e^{i\beta} d\beta\rk| _{B^{\gamma}_{p,q}}.
 \EEQS
 The Minkowski inequality gives
\DEQSZ\label{back01}
\ldots\nonumber
 &\le &
 \frac{1}{2t\pi} \int_{\rho}^{\infty} e^{- r\sin\theta'}\,\lk| R(\frac{r}{t}e^{-i(\frac{\pi}{2}+\theta')},a(x,D))v_n\rk|_{B^{\gamma}_{p,q}}\,dr
 \\\nonumber
&+& \frac{1}{2t\pi} \int_{\rho}^{\infty} e^{- r\sin\theta' }\, \lk| R(\frac{r}{t}e^{i(\frac{\pi}{2}+\theta')},a(x,D))v_n\rk|_{B^{\gamma}_{p,q}}\, dr
\\
&+&\frac{\rho^{-1}}{2t\pi} \int_{-\frac{\pi}{2}-\theta^\prime}^{\frac{\pi}{2}+\theta^\prime}e^{\rho \cos\beta }\, \lk| R(\frac{\rho}{t} e^{i\beta},a(x,D))v_n\rk|_{B^{\gamma}_{p,q}}\, d\beta.
 \EEQSZ
We analyse  the RHS of the estimate above by  analysing the operator $R(\frac{\rho}{t} e^{i\beta},a(x,D))$ by applying Theorem \ref{invert22}. Before doing that, we have to calculate the seminorm of ${\lambda+a(x,\xi)}$ in the space of hypoelliptic operators carefully. In this way, first of all, we require the following estimate.
Similar to  p.\ 11 in \cite{BG1}, we can see that for $\lambda\in\Sigma_{\theta+\frac{\pi}{2}}$,
$$
\langle|\lambda|^{\frac{1}{\delta}}+|\xi|\rangle^\delta\lesssim|\lambda+a(x,\xi)|.
$$
The above result is due to the fact that  $a\in\Hyp_{d+1,0;1,0}^{\delta,\delta}(\RR^d\times\RR^d)$, the idenitity  \eqref{schill}, and since $\sigma$ is bounded away from zero. Therefore, there exists a number $r>0$ such that we know
$$|\lambda+a(x,\xi)|^{-1}\lesssim
\langle|\lambda|^{\frac{1}{\delta}}+|\xi|\rangle^{-\delta}\lesssim\langle|\xi|\rangle^{-\delta},
$$
for all $\xi\in\RR^d$ with $r\le |\xi|$.
In this way  we obtain 
\DEQS
\lk|\frac{ 1}{\lambda+a(x,\xi)}\rk|&\le& \lk|\frac{ 1}{\lambda+\psi(\sigma(x)^T\xi)}\rk|
\le \lk|\frac{ 1}{\lambda+\langle \sigma(x)^T\xi\rangle^{\delta} }\rk|
\le
C(\sigma,\delta)\, \lgxi ^ {-\delta}
.
\EEQS
Let $k\in\{1,\ldots, d\}$. Then
\DEQS
\Big|\partial_{\xi_k} \lk[\frac{1}{\lambda+a(x,\xi)}\rk]\Big|
&=&\Big|\frac{\partial_{\xi_k}a(x,\xi)}{(\lambda+a(x,\xi))^2}\Big|
\le
\lk|\frac {\lgxi^ {\delta-1}}{(\lambda+\lgxi^ {\delta})^2}\rk|\le C(\sigma,\delta)\, \lgxi^ {-\delta-1}.
\EEQS
Next, let $k,l\in\{1,\ldots,d\}$.
Then,
\DEQS
\partial_{\xi_l}\partial_{\xi_k}\lk[  \frac{1}{\lambda+a(x,\xi)}\rk]=
-\frac { \partial^2_{\xi_l\xi_k}a(x,\xi)}{(\lambda+a(x,\xi))^2}+\frac {2 \partial_{\xi_l}a(x,\xi) \partial_{\xi_k}a(x,\xi)}{(\lambda+a(x,\xi))^3}
.
\EEQS
Hence, we have
\DEQS
\Big| \partial_{\xi_l}\partial_{\xi_k}\lk[  \frac{1}{\lambda+a(x,\xi)}\rk]\Big|
&\le & C(\sigma, \delta)
\lk\{ \frac {\lgxi^ {\delta-2}}{(\lambda +\lgxi^ \delta)^2}+ \frac {\lgxi^ {\delta-2}}{(\lambda +\lgxi^ \delta)^2}\rk\}
\le  C(\sigma, \delta)
 {\lgxi^ {-\delta-2}}.
\EEQS
Let $\alpha=(\alpha_1,\cdots,\alpha_k)$ be a multiindex.
By observing the pattern of the above derivative we can identify the general derivative $\partial_\xi^\alpha[\frac{1}{\lambda+a(x,\xi)}]$ and get the following estimate. There exist $C_1,C_2,\cdots  C_{|\alpha|}>0$ depending on $\sigma$ and $\delta$ such that
{\DEQS
\lqq{\lk|\partial_\xi^\alpha\lk[\frac{1}{\lambda+a(x,\xi)}\rk]\rk|\le C_1|\lambda+a|^{-|\alpha|-1}\langle\xi\rangle^{\delta|\alpha|-|\alpha|}}
&&
\\&&
{} +C_2|\lambda+a|^{-|\alpha|}\langle\xi\rangle^{\delta(|\alpha|-1)-|\alpha|}+ C_3|\lambda+a|^{-|\alpha|+1}\langle\xi\rangle^{\delta(|\alpha|-2)-|\alpha|}+\dots+C_{|\alpha|}|\lambda+a|^{-2}\langle\xi\rangle^{\delta-|\alpha|}.
\EEQS
Therefore
\DEQS
\lqq{\lk|\partial_\xi^\alpha\lk[\frac{1}{\lambda+a(x,\xi)}\rk]\rk| \langle\xi\rangle^{-\delta+|\alpha|}    \le C_1|\lambda+a|^{-|\alpha|-1}\langle\xi\rangle^{\delta|\alpha|-\delta}}
&&
\\
&&{}+C_2|\lambda+a|^{-|\alpha|}\langle\xi\rangle^{\delta|\alpha|-2\delta}+ C_3|\lambda+a|^{-|\alpha|+1}\langle\xi\rangle^{\delta|\alpha|-3\delta}+\dots+C_{|\alpha|}|\lambda+a|^{-2}.
\EEQS
Using the fact that there exists some $r>0$ such that we have for all $x\in\RR^d$ with $|\xi|\ge r$
$$|\lambda+a(x,\xi)|^{-1}\lesssim
\langle|\lambda|^{\frac{1}{\delta}}+|\xi|\rangle^{-\delta}\lesssim\langle|\xi|\rangle^{-\delta},
$$
 we obtain 
$$\lk|\partial_\xi^\alpha\lk[\frac{1}{\lambda+a(x,\xi)}\rk]\rk| \langle\xi\rangle^{-\delta+|\alpha| }\le (C_1+C_2+\dots C_{|\alpha|})\langle|\xi|\rangle^{-2\delta}\lesssim \langle|\xi|\rangle^{-2\delta}\le C(\sigma,\delta) R^{-2\delta} .
$$
The last  line shows that    $\lambda+a(x,\xi)\in\Hyp_{d+1,0;1,0}^{\delta,\delta}(\RR^d\times\RR^d)$.

It remains to estimate the norm of the symbol $\lambda+a(x,\xi)$ in $\tilde \CA^{m} _{k_1,k_2;{\rho,\delta}}(\RR^d\times\RR^d)$ with $k_1=2d+4,k_2=d+3$.
Due to the fact that one has to take at least once the derivative with respect to $\xi$,
the constant $\lambda$ has no influence on the norm in $\tilde \CA$.
Since, we have for $a\in \CA ^m_{k_1,k_2;\rho,\delta}(\RR^d\times\RR^d)$
\DEQS
\lqq{ \| \lambda +a\| _{\tilde{\CA}_{k_1,k_2;{\rho,\delta}}^m }
} &&
\\
&=& \sup_{1\le|\alpha|\le k_1,|\beta|\le k_2} \, \sup_{(x,\xi)\in \RR^d
\times \RR^d } \lk| \partial ^ \alpha_x \partial ^ \beta_\xi (\lambda+a(x,\xi)) \rk|
\ggxi  ^ {\rho|\beta|-m}\ggx^{\delta|\alpha|}, \quad
\EEQS
where  $k_1=2d+4,k_2=d+3$ and $\rho=1,\delta=0$, we can conclude that $\lambda+a(x,\xi)\in\CA^{-1}_{2d+4,d+3;1,0}$.
}

\medskip

Going back to \eqref{back01} we can conclude by our discussion before 
\DEQS
\lqq{\lim_{M\to \infty}
\lk| \frac 1{2\pi i}\int_{\Gamma_{\theta'}(\rho,M)} e^{\lambda t}  R(\lambda:a(x,D)) v_n d\lambda \rk|_{B^{\gamma}_{p,q}}}
&&
\\
&\le &
 \frac{C(\sigma,\delta)}{2t\pi} \int_{\rho}^{\infty} e^{- r\sin\theta'}\,|v_n|_{B^{\gamma-\delta}_{p,q}}dr
+ \frac{C(\sigma,\delta)}{2t\pi} \int_{\rho}^{\infty} e^{- r\sin\theta' }\,|v_n|_{B^{\gamma-\delta}_{p,q}}  dr
\\
&&{}+\frac{C(\sigma,\delta)\rho^{-1}}{2t\pi} \int_{-\frac{\pi}{2}-\theta^\prime}^{\frac{\pi}{2}+\theta^\prime}e^{\rho \cos\beta }\,|v_n|_{B^{\gamma-\delta}_{p,q}} \, d\beta
\\
&\le &
\frac{C(\sigma,\delta)}{2t\pi} \,|v_n|_{B^{\gamma-\delta}_{p,q}}.
 \EEQS
 Taking the limit $n\to\infty$, we get
 \DEQS
\lim_{M\to \infty}
\lk| \frac 1{2\pi i}\int_{\Gamma_{\theta'}(\rho,M)} e^{\lambda t}  R(\lambda:a(x,D)) u d\lambda \rk|_{B^{\gamma}_{p,q}}
&\le &
\frac{C(\sigma,\delta)}{2t\pi} \,|u|_{B^{\gamma-\delta}_{p,q}},
 \EEQS
 which is the assertion.

\end{proof}

The following Corollary is a consequence of  Theorem \eqref{sftheo1}.
\begin{cor}\label{smooth1}
Let $L$ be  a  square integrable L\'evy process with Blumenthal--Getoor index $\delta\in(1,2)$ of order $2d+4$. Let  $\sigma\in \cal^{d+3}_b(\RR^d)$ be  bounded away from zero and $ b\in \cal^{d+3}_b(\RR^d)$. Let  $m(D)$ be  a pseudo--differential operator such that $m(\xi)\in S^\kappa_{1,0}(\RR^ d\times \RR^ d)$ with $0\le \kappa\leq 1$.
Then, there exists a constant $C>0$ such that  for any $0<\gamma<\frac{\delta-\kappa}{4}$, $\gamma\not\in\NN$, and $t>0$  we have
\DEQSZ\nonumber
\lk| \CP_t \, m(D)\,  u \rk| _{\cal^ {\gamma}_b(\RR^d)}
\le \frac {C}{{{t}}}\, \lk|u\rk| _{L^ {\infty}(\RR^d)}.
\EEQSZ
\end{cor}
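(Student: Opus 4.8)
The plan is to run the proof of Corollary \ref{sftheo1} essentially verbatim, inserting at the right place the Besov mapping property of the pseudo--differential operator $m(D)$. Fix $p$ and $q$ as in the statement, i.e.\ $\tfrac{4d}{\delta-\kappa}<p<\infty$ and $1<q<\infty$. First I would reduce the H\"older--Zygmund norm to a Besov norm with a finite integrability exponent, so that Theorem \ref{sftheo111} becomes applicable: by \cite[p.\ 14]{Runst+Sickel_1996} one has $\cal^\gamma_b(\RR^d)=B^{\gamma}_{\infty,\infty}(\RR^d)$, and by the critical Besov embedding (\cite[Chapter 2.2.3]{Runst+Sickel_1996}, \cite[Section 6.4]{pseudo2}) one has $B^{\gamma+d/p}_{p,q}(\RR^d)\hookrightarrow B^{\gamma}_{\infty,\infty}(\RR^d)$, so that
$$
|\CP_t m(D)u|_{\cal^\gamma_b}\ \lesssim\ |\CP_t m(D)u|_{B^{\gamma+d/p}_{p,q}}.
$$

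Next I would observe that under the present hypotheses the symbol $a(x,\xi)=\psi(\sigma^T(x)\xi)$ of \eqref{schill} fulfils the assumptions of Theorem \ref{sftheo111}: $L$ is square integrable with Blumenthal--Getoor index $1<\delta<2$ of order $2d+4$, $\sigma,b\in\cal^{d+3}_b(\RR^d)$, and $\sigma$ is bounded away from zero. Applying the estimate of Theorem \ref{sftheo111} once, with $v:=m(D)u$, gives
$$
|\CP_t m(D)u|_{B^{\gamma+d/p}_{p,q}}\ \le\ \frac Ct\,|m(D)u|_{B^{\gamma+d/p-\delta}_{p,q}},
$$
and, since $m\in S^{\kappa}_{1,0}(\RR^d\times\RR^d)$, the operator $m(D)$ maps $B^{s}_{p,q}(\RR^d)$ boundedly into $B^{s-\kappa}_{p,q}(\RR^d)$ for every $s\in\RR$ (Besov boundedness of pseudo--differential operators; cf.\ Theorem \ref{bound1} and the appendix, or \cite{pseudo2}); with $s=\gamma+\tfrac dp-\delta+\kappa$ this gives $|m(D)u|_{B^{\gamma+d/p-\delta}_{p,q}}\lesssim|u|_{B^{\gamma+d/p-\delta+\kappa}_{p,q}}$.

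It then remains to bound $|u|_{B^{\gamma+d/p-\delta+\kappa}_{p,q}}$ by $|u|_{L^\infty(\RR^d)}$, and this is where the numerical constraints are used: writing $r:=\delta-\kappa$, the hypotheses $p>\tfrac{4d}r$ and $\gamma<\tfrac r4$ yield $\tfrac dp<\tfrac r4$ and hence $\gamma+\tfrac dp-\delta+\kappa=\gamma+\tfrac dp-r<-\tfrac r2<0$, so that the target smoothness is negative. Then, exactly as in the proof of Corollary \ref{sftheo1}, one combines the critical embedding $B^{d/p'}_{p',1}(\RR^d)\hookrightarrow\cal^0_b(\RR^d)\hookrightarrow L^\infty(\RR^d)$ with \cite[Lemma 6.5]{pseudo2} and the duality of Besov spaces to obtain $L^\infty(\RR^d)\hookrightarrow B^{-d/p'}_{p,\infty}(\RR^d)$, and then descends in the smoothness and fine indices to reach $B^{\gamma+d/p-\delta+\kappa}_{p,q}(\RR^d)$; thus $|u|_{B^{\gamma+d/p-\delta+\kappa}_{p,q}}\lesssim|u|_{L^\infty}$. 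Chaining the four displays gives $|\CP_t m(D)u|_{\cal^\gamma_b}\le\tfrac Ct\,|u|_{L^\infty(\RR^d)}$, which is the assertion.

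I expect the last paragraph to be the only delicate point. The whole chain costs one loss of $\tfrac dp$ (the critical embedding into $B^\gamma_{\infty,\infty}$) and one loss of $\kappa$ (through $m(D)$), against a gain of $\delta$ from the single application of Theorem \ref{sftheo111}, and the residual negative exponent still has to lie below the ($p$--dependent) threshold of the $L^\infty$--Besov duality embedding; the factor $4$ in the conditions $\gamma<\tfrac{\delta-\kappa}4$ and $p>\tfrac{4d}{\delta-\kappa}$ is precisely what makes this bookkeeping close. All the other ingredients --- Theorem \ref{sftheo111} as a black box, the Besov embeddings, and the Besov continuity of $m(D)$ --- are routine.
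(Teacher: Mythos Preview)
Your proposal is correct and follows the same route as the paper: reduce the H\"older--Zygmund norm to a Besov norm with finite $p$, apply the $\delta$--smoothing of $\CP_t$, absorb the order--$\kappa$ loss through the Besov boundedness of $m(D)$, and close with the $L^\infty$--to--negative--Besov embedding exactly as in the proof of Corollary \ref{sftheo1}. The only cosmetic difference is that the paper re-derives the smoothing step by writing out the contour--integral representation of $\CP_t$ and invoking Theorem \ref{invert22} on the resolvent, whereas you call Theorem \ref{sftheo111} directly as a black box; the paper also appends an $n$-fold iteration at the end (producing a $t^{-n}$ bound), which your single application does not need for the stated $t^{-1}$ conclusion.
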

\begin{proof}
\del{Note, first that we have for Sobolev embedding for $p,q\in(1,\infty)$ with $\gamma+\frac dp>0$ 
and $1<q<\infty$
(see \cite[Identities on p.14 and Chapter 2.2.3]{Runst+Sickel_1996})
$$
\lk| \CP_t \, m(D)\,  u \rk| _{\cal^ {\gamma}_b(\RR^d)}
\lesssim \lk|\CP_t m(D)u \rk|_{B^{\gamma+ \frac{d}{p}}_{p,q}}.
$$}
The proof is a combination of the proof of Theorem \ref{sftheo1} and Corollary \ref{sftheo1}.
Due to this reason, we include only the essential steps of the  proof.
 We have already shown that
 $$\lambda+a(x,\xi)\in\Hyp_{d+1,0;1,0}^{\delta,\delta}(\RR^d\times\RR^d)\cap\CA^{-1}_{2d+4,d+3;1,0}.
  $$
  On the other hand it is straightforward  that ${m(\xi)}\in\Hyp_{d+1,0;1,0}^{\delta,\delta}(\RR^d\times\RR^d)\cap\CA^{-1}_{2d+4,d+3;1,0}$. As already observed in the proof of Theorem \ref{sftheo1}, we have the following representation
  of the semigroup 
 \begin{equation*}
\begin{split}
\lk|\CP_t m(D)u \rk|_{B^{\gamma+ \frac{d}{p}}_{p,q}} =\lim_{M\to \infty}
\lk| \frac1{2\pi i}\int_{\Gamma_{\theta'}(\rho,M)} e^{\lambda t}  R(\lambda:a(x,D))m(D) u d\lambda \rk|_{B^{\gamma+ \frac{d}{p}}_{p,q}}.
\end{split}
\end{equation*}
Similarly as in the proof of Theorem \ref{sftheo1}
we can write 
\DEQS
\lqq{\lim_{M\to \infty}
\lk| \frac 1{2\pi i}\int_{\Gamma_{\theta'}(\rho,M)} e^{\lambda t}  R(\lambda:a(x,D))\,m(D)\, u \,d\lambda \rk|_{B^ {\gamma+ \frac{d}{p}}_{p,q}}}
&&
\\
&\le &\lim_{M\to \infty}  \lk|\frac{1}{2\pi it} \int_{\rho}^{M} e^{r e^{-i(\frac{\pi}{2}+\theta^\prime)} }\,  R(\frac{r}{t}e^{-i(\frac{\pi}{2}+\theta^\prime)},a(x,D))\, m(D)\, u\, e^{i(\frac{\pi}{2}+\theta^\prime)} dr \rk|_{B^{\gamma+ \frac{d}{p}}_{p,q}}
\\
&+& \lim_{M\to \infty}\lk|\frac{1}{2\pi it} \int_{\rho}^{M} e^{r e^{i(\frac{\pi}{2}+\theta^\prime)} }\, R(\frac{r}{s}e^{i(\frac{\pi}{2}+\theta^\prime)},a(x,D))\, m(D)\,u\, e^{-i(\frac{\pi}{2}+\theta^\prime)} dr\rk|_{B^{\gamma+ \frac{d}{p}}_{p,q}}\\&+&
 \lk|\frac{1}{2\pi it} \int_{-\frac{\pi}{2}-\theta^\prime}^{\frac{\pi}{2}+\theta^\prime} e^{\rho e^{i\beta} }\, R(\frac{\rho}{s} e^{i\beta},a(x,D)) \, m(D)\,u\, \rho^{-1} e^{i\beta} d\beta\rk| _{B^{\gamma+ \frac{d}{p}}_{p,q}}
 \\
 &\le &
 \frac{1}{2t\pi} \int_{\rho}^{\infty} e^{- r\sin\theta'}\,\lk| R(\frac{r}{t}e^{-i(\frac{\pi}{2}+\theta')},a(x,D))\,  m(D)\,u\rk|_{B^{\gamma+ \frac{d}{p}}_{p,q}}\,dr
 \EEQS
 \DEQS
\lqq{+ \frac{1}{2t\pi} \int_{\rho}^{\infty} e^{- r\sin\theta' }\, \lk| R(\frac{r}{t}e^{i(\frac{\pi}{2}+\theta')},a(x,D))\, m(D)\,u\rk|_{B^{\gamma+ \frac{d}{p}}_{p,q}}\, dr}
&&
\\
&+&\frac{\rho^{-1}}{2t\pi} \int_{-\frac{\pi}{2}-\theta^\prime}^{\frac{\pi}{2}+\theta^\prime}e^{\rho \cos\beta }\, \lk| R(\frac{\rho}{t} e^{i\beta},a(x,D))\, m(D)\,u\rk|_{B^{\gamma+ \frac{d}{p}}_{p,q}}\, d\beta.
 \EEQS
 Note again  that the  semi-norms $$\lk\|\lambda+a\rk\|_{\tiny \Hyp_{d+1,0;1,0}^{\delta,\delta}(\RR^d\times\RR^d)} \quad \mbox{and}\quad \lk\|\lambda+a\rk\|_{\CA^{-1}_{2d+4,d+3;1,0}}$$
  do  not depend on $\lambda$.
In this way, by separating $m(D)$ and $R(\frac{r}{t}e^{i(\frac{\pi}{2}+\theta')},a(x,D))$
and applying Theorem \eqref{invert22} we get
\DEQS
 \lqq{\dots\lesssim\frac{1}{2t\pi} \int_{\rho}^{\infty} e^{- r\sin\theta'}\lk|\, m(D)\,u\rk|_{B^{\gamma+\frac{d}{p}-\delta}_{p,q}}\,dr }
 &&
 \\
 &+& \frac{1}{2t\pi} \int_{\rho}^{\infty} e^{- r\sin\theta^\prime }\lk|\, m(D)\,u\rk|_{B^{\gamma+\frac{d}{p}-\delta}_{p,q}}\,dr
+\frac{\rho^{-1}}{2t\pi} \int_{-\frac{\pi}{2}-\theta^\prime}^{\frac{\pi}{2}+\theta^\prime}e^{\rho \cos\beta }\lk|m(D)\,u\rk|_{B^{\gamma+\frac{d}{p}-\delta}_{p,q}}\,d\beta,
\\
&\lesssim &\lk[\frac{1}{t\pi} \int_{\rho}^{\infty} e^{- r\sin\theta^\prime }\,dr + \frac{1}{2t\pi} \int_{-\frac{\pi}{2}-\theta^\prime}^{\frac{\pi}{2}+\theta^\prime}e^{\rho \cos\beta }\,d\beta\rk] \lk|\, m(D)\,u\rk|_{B^{\gamma-\delta}_{p,q}}\le \frac{C}{t} \lk|\, m(D)\,u\rk|_{B^{\gamma+\frac{d}{p}-\delta}_{p,q}}.
 \EEQS
Since $m(\xi)\in S^\kappa_{1,0}(\RR^ d\times \RR^ d)$
\del{Again applying Theorem \eqref{invert22}
and noting that
${m(\xi)}\in\Hyp_{d+1,0;1,0}^{\delta,\delta}(\RR^d\times\RR^d)\cap\CA^{-1}_{2d+4,d+3;1,0}$,}
we get 
\DEQS
\frac{C}{t} \lk|\, m(D)\,u\rk|_{B^{\gamma+\frac{d}{p}-\delta}_{p,q}}&\le &\frac{C}{t} %
\lk\|m\rk\|_{ S^\kappa_{1,0}}
\lk|u\rk|_{B^{\gamma+\frac{d}{p}-\delta+\kappa}_{p,q}}
\\
&\lesssim & \frac{C}{t}\lk|u\rk|_{B^{\gamma+\frac{d}{p}-\delta+\kappa}_{p,q}}.
\EEQS

\del{On the other hand due to the Sobolev embedding theorem (Theorem 6.15) in \cite{pseudo2}, we have
$$\lk| \CP_t u \rk| _{B^ {\gamma}_{\infty,\infty}(\RR^d)}\le \lk| \CP_t u \rk| _{B^ {\gamma+\frac{d}{p}}_{p,q}(\RR^d)},$$
with $\frac{4d}{\delta-\kappa}< p< \infty$ and $1 <q < \infty$. That is
\DEQSZ\nonumber
\lk| \CP_t \, m(D)\, u \rk| _{\cal^ {\gamma}_b(\RR^d)}=\lk| \CP_t \, m(D)\, u \rk| _{B^ {\gamma}_{\infty,\infty}(\RR^d)}\le \lk| \CP_t \, m(D)\,) u \rk| _{B^ {\gamma+\frac{d}{p}}_{p,q}(\RR^d)} \le\frac{C}{t} \lk| u \rk| _{B^ {\gamma+\frac{d}{p}-\delta+\kappa}_{p,q}(\RR^d)}
\le \frac {C}{{{t}}}\, \lk|u\rk| _{L^ {\infty}(\RR^d)}.
\EEQSZ
}
Now we are following the same argument of the proof of Corollary \ref{sftheo1} to complete the argument.
Fix  $\gamma<(n-1)\delta-\kappa-d$ and let $p\ge 1$ such that $\frac dp<(n-1)\delta-\kappa-\gamma$. Fix $1 \le q < \infty$ arbitrary.
Then, we know, firstly (see \cite[p.\ 14]{Runst+Sickel_1996})
$$\cal^{\gamma}(\RR^d)=B^ \gamma_{\infty,\infty}(\RR^d),\quad \gamma\not\in\NN.
$$
 Secondly, we apply the embedding $B^{\gamma+\frac dp}_{p,q}(\RR^d)\hookrightarrow B^ \gamma_{\infty,\infty}(\RR^d)$ (see \cite[Chapter 2.2.3]{Runst+Sickel_1996}, \cite[Section 6.4]{pseudo2}), and, finally,
 we apply Theorem
 \ref{sftheo111} $n$ times to get,
\DEQSZ\nonumber
\lk| \CP_t \, m(D)\, u \rk| _{\cal^ {\gamma}_b}\le \lk| \CP_t \, m(D)\, u \rk| _{B^ {\gamma}_{\infty,\infty}(\RR^d)}\le \lk| \CP_t \, m(D)\, u \rk| _{B^ {\gamma+\frac{d}{p}}_{p,q}}\\=\lk| (\CP_{\frac tn})^n \, m(D)\, u \rk| _{B^ {\gamma+\frac{d}{p}}_{p,q}}
\le\frac {(nC)^n}{{{t}^n}}\, \lk| u \rk| _{B^ {\gamma+\frac{d}{p}-n\delta+\kappa}_{p,q}}
.
\EEQSZ
By means of  \cite[Excercise 6.25, Corollary 6.14]{pseudo2},
\DEQSZ\nonumber
B^\theta_{p^\prime,1}(\RR^d)\hookrightarrow B^{\frac{d}{p^\prime}}_{p^\prime,1}(\RR^d)\hookrightarrow \cal^0(\RR^d)\hookrightarrow L^\infty(\RR^d),
\EEQSZ
where $\theta=n\delta-\frac{d}{p}-\gamma+\kappa$ and $p^\prime=\frac{p}{p-1}$. Now, applying \cite[Lemma 6.5]{pseudo2} and duality property of the Besov spaces gives that
\DEQSZ\nonumber
B^\theta_{p^\prime,1}(\RR^d)\hookrightarrow L^\infty(\RR^d)\hookrightarrow B^{-\theta}_{p,\infty}(\RR^d)=B^{\gamma+\frac{d}{p}-n\delta+\kappa}_{p,\infty}(\RR^d)\hookrightarrow B^{\gamma+\frac{d}{p}-(n-1)\delta+\kappa}_{p,1}(\RR^d).
\EEQSZ
Finally by fixing $q=1$ we  get
\DEQSZ\nonumber
\lk| \CP_t  \, m(D)\, u \rk| _{\cal^ {\gamma}_b}
\le\frac {(nC)^n}{{{t}^n}}\, \lk| u \rk| _{B^ {\gamma+\frac{d}{p}-n\delta+\kappa}_{p,1}}\le\frac {(nC)^n}{{{t}^n}}\, \lk| u \rk| _{B^ {\gamma+\frac{d}{p}-(n-1)\delta+\kappa}_{p,1}}\le\frac {(nC)^n}{{{t}^n}}\, \lk| u \rk| _{L^\infty}
.
\EEQSZ
This completes the proof.
\del{
Let $\theta'\in (0,\theta)$, $\rho\in (0,\infty)$, and
$$
\Gamma_{\theta'}(\rho,M)= \Gamma^{(1,M)}_{\theta',\rho}+\Gamma^{(2,M)}_{\theta',\rho}+\Gamma^{(3)}_{\theta',\rho},
$$
where $\Gamma^{(1,M)}_{\theta',\rho}$ and $\Gamma^{(2,M)}_{\theta',\rho}$ are the rays $r e^{i(\frac{\pi}{2}+\theta')}$ and $r e^{-i(\frac{\pi}{2}+\theta')}$, $\rho\leq r \le M<\infty$, and $\Gamma^{(3)}_{\theta',\rho}= \rho^{-1}e^{i\alpha}$, $\alpha\in[-\frac{\pi}{2}-\theta',\frac{\pi}{2}+\theta']$.
Let us denote the resolvent, i.e.\ the inverse of $a(x,\lambda,D)=\lambda I+a(x,D)$ by $R(\lambda:a(x,D))$.
Note, that the symbol of $R(\lambda:a(x,D))$ cannot be written in a closed form.
It follows from \cite[Theorem 1.7.7]{Pazy:83} and Fubini's Theorem that for $t>0$ and $u\in L^\infty(\RR^d)$
\begin{equation*}
\begin{split}
\CP_t u  =\lim_{M\to \infty}
\frac 1
{2\pi i}\int_{\Gamma_{\theta'}(\rho,M)} e^{\lambda t}  R(\lambda:a(x,D)) u d\lambda.
\end{split}
\end{equation*}
Denote $\tilde \CP_t=\CP_t m(D)=\text{Op}[\CP(t,x,\xi) m(\xi)]$ and $\tilde R(\lambda:a(x,D))=R(\lambda:a(x,D)) m(D)=\text{Op}[\tilde R(\lambda,x,\xi)]=\text{Op}[R(\lambda,x,\xi) m(\xi)]$.
Then, since $(\CP_t)_{t\ge 0}$ is analytic in $B^{\gamma}_{p,q}(\RR^d)$, the limit exists for all $u\in B_{p,q}^{\gamma}(\RR^d)$  and we have 
\begin{equation*}
\begin{split}
\tilde \CP_t u  =\lim_{M\to\infty} \frac 1{2\pi i}\int_{\Gamma_{\theta'}(\rho,M)} e^{\lambda t}\tilde R(\lambda:a(x,D))u d\lambda
\end{split}
\end{equation*}
In particular, we have
\begin{equation*}
\begin{split}
\lk|\tilde \CP_t u \rk|_{B^\gamma_{p,q}} =\lim_{M\to \infty}
\lk| \frac 1{2\pi i}\int_{\Gamma_{\theta'}(\rho,M)} e^{\lambda t}  \tilde R(\lambda:a(x,D)) u d\lambda \rk|_{B^\gamma_{p,q}}.
\end{split}
\end{equation*}
The Minkowski inequality gives
\begin{equation*}
\begin{split}
\lk|\tilde \CP_t u \rk|_{B^\gamma_{p,q}} =\lim_{M\to \infty}
 \frac 1{2\pi i}\int_{\Gamma_{\theta'}(\rho,M)}  e^{\lambda t}  \lk|\tilde R(\lambda:a(x,D)) u  \rk|_{B^\gamma_{p,q}} d\lambda.
\end{split}
\end{equation*}
Now, in the  next step we have to calculate the norm of $|\tilde R(\lambda:a(x,D)) u|_{L(L^\infty,  B^\gamma_{p,q}(\RR^d)}$ depending on $\lambda$.
Here we use Theorem \ref{invert22} to calculate
\DEQS
|\tilde R(\lambda:a(x,D)) u|_{L(L^\infty,  B^\gamma_{p,q}(\RR^d)}.
\EEQS
In the first step, we calculate the norm of $\lambda+a(x,\xi)$ in $\Hyp_{\rho,\delta}^{0,0}(\RR^ d,\RR^ d) $ in dependence of $\lambda$.
First, note that
for $\alpha=(\alpha_1,\alpha_2,\ldots,\alpha_d)$
$$
\partial^{\alpha}_\xi\frac 1 {\lambda + a(x,\xi,t)}
= \partial^{\alpha}_\xi
$$
\begin{equation*}
\begin{split}
\lk|\partial^{\alpha}_\xi\tilde \CP(x,\xi,t)\rk| \le\ \frac{1}{2t\pi} \int_{\rho}^{\infty} e^{- r\sin\theta'}\,\lk| \partial^{\alpha}_\xi\tilde R(\frac{r}{t}e^{-i(\frac{\pi}{2}+\theta')},x,\xi)\rk|\,dr \\
+ \frac{1}{2t\pi} \int_{\rho}^{\infty} e^{- r\sin\theta' }\, \lk|\partial^{\alpha}_\xi \tilde R(\frac{r}{t}e^{i(\frac{\pi}{2}+\theta')},x,\xi)\rk|\, dr
+\frac{\rho^{-1}}{2t\pi} \int_{-\frac{\pi}{2}-\theta^\prime}^{\frac{\pi}{2}+\theta^\prime}e^{\rho \cos\beta }\, \lk|\partial^{\alpha}_\xi\tilde R(\frac{\rho}{t} e^{i\beta},x,\xi)\rk| \, d\beta
\\\le \frac{1}{t\pi} \int_{\rho}^{\infty} e^{- r\sin\theta'}\sum_{|\alpha|\le d,\varrho\le\alpha}C^\alpha_\varrho\lk|\partial^{\alpha-\varrho}_\xi R(\frac{r}{t}e^{-i(\frac{\pi}{2}+\theta')},x,\xi)\rk|\lk|\partial^{\varrho}_\xi m(\xi)\rk|\,dr\\+\frac{\rho^{-1}}{2t\pi} \int_{-\frac{\pi}{2}-\theta^\prime}^{\frac{\pi}{2}
+\theta^\prime} \sum_{|\alpha|\le d,\varrho\le\alpha}C^\alpha_\varrho\lk|\partial^{\alpha-\varrho}_\xi R(\frac{\rho}{t} e^{i\beta},x,\xi)\rk|\lk|\partial^{\varrho}_\xi m(\xi)\rk|\, d\beta
\\\le \frac{1}{t\pi} \int_{\rho}^{\infty} e^{- r\sin\theta'}\sum_{|\alpha|\le d,\varrho\le\alpha}C^\alpha_\varrho\gr |\xi|+|\frac{r}{t}|^ {\frac 1 \delta} \gl ^{-\delta-|\alpha-\varrho|}\gr |\xi| \gl ^{\kappa-|\varrho|}\,dr\\+\frac{\rho^{-1}}{2t\pi} \int_{-\frac{\pi}{2}-\theta^\prime}^{\frac{\pi}{2}
+\theta^\prime} \sum_{|\alpha|\le d,\beta\le\alpha}C^\alpha_\varrho\gr |\xi|+|\frac{\rho}{t}|^ {\frac 1 \delta} \gl ^{-\delta-|\alpha-\varrho|}\gr |\xi| \gl ^{\kappa-|\varrho|}\, d\beta
\\ \le C_1\frac{\gr |\xi| \gl ^{-\delta-|\alpha|+\kappa}}{t\pi} \int_{\rho}^{\infty} e^{- r\sin\theta'}\,dr+C_2\frac{\gr |\xi| \gl ^{-\delta-|\alpha|+\kappa}\rho^{-1}}{2t\pi} \int_{-\frac{\pi}{2}-\theta^\prime}^{\frac{\pi}{2}+\theta^\prime}e^{\rho \cos\beta }  \, d\beta=\frac{C}{t}\gr |\xi| \gl ^{-\delta-|\alpha|+\kappa}.
\end{split}
\end{equation*}

We would slightly modify the proof of Theorem \eqref{sftheo1} and construct the proof of this corollary.
Let $\theta'\in (0,\theta)$, $\rho\in (0,\infty)$, and
$$
\Gamma_{\theta',\rho}= \Gamma^{(1)}_{\theta',\rho}+\Gamma^{(2)}_{\theta',\rho}+\Gamma^{(3)}_{\theta',\rho},
$$
where $\Gamma^{(1)}_{\theta',\rho}$ and $\Gamma^{(2)}_{\theta',\rho}$ are the rays $r e^{i(\frac{\pi}{2}+\theta')}$ and $r e^{-i(\frac{\pi}{2}+\theta')}$, $\rho\leq r <\infty$, and $\Gamma^{(3)}_{\theta',\rho}= \rho^{-1}e^{i\alpha}$, $\alpha\in[-\frac{\pi}{2}-\theta',\frac{\pi}{2}+\theta']$.
Denote $\tilde \CP_t=\CP_t m(D)=\text{Op}[\CP(t,x,\xi) m(\xi)]$ and $\tilde R(\lambda:a(x,D))=R(\lambda:a(x,D)) m(D)=\text{Op}[\tilde R(\lambda,x,\xi)]=\text{Op}[R(\lambda,x,\xi) m(\xi)]$.
It follows from \cite[Theorem 1.7.7]{Pazy:83} and Fubini's Theorem that for $t>0$ and $u\in L^\infty(\RR^d)$
\begin{equation*}
\begin{split}
\tilde \CP_t u  =\frac 1{2\pi i}\int_{\Gamma_{\theta'}(\rho)} e^{\lambda t}\tilde R(\lambda:a(x,D))u d\lambda
=\frac 1{2\pi i}\int_{\Gamma_{\theta'}(\rho)} e^{\lambda t}\int_{\RR^ d} e^{i \xi x }\tilde R(\lambda,x,\xi) \hat u (\xi)\, d\xi d\lambda\\
=\int_{\RR^ d} e^{i \xi x }\biggl[\tinv{2\pi i}\int_{\Gamma_{\theta'}(\rho)} e^{\lambda t}\tilde R(\lambda,x,\xi) d\lambda\biggr] \hat u (\xi)\, d\xi.
\end{split}
\end{equation*}
Therefore $$\tilde \CP(x,\xi,t)=\tinv{2\pi i}\int_{\Gamma_{\theta'}(\rho)} e^{\lambda t}\tilde R(\lambda,x,\xi) d\lambda.$$
Now we would like to estimate $$\partial^{\alpha}_\xi\tilde \CP(x,\xi,t)m(\xi)=\tinv{2\pi i}\int_{\Gamma_{\theta'}(\rho)} e^{\lambda t} \partial^{\alpha}_\xi\tilde R(\lambda,x,\xi) d\lambda,$$ where $\alpha=(\alpha_1,\alpha_2,\ldots,\alpha_d)$.
First consider,
\begin{equation*}
\begin{split}
\partial^{\alpha}_\xi\tilde\CP(x,\xi,t) =\tinv{2\pi i}\int_{\Gamma_{\theta'}(\rho)} e^{\lambda t} \partial^{\alpha}_\xi \tilde R(\lambda,x,\xi) d\lambda
  = \frac{1}{2\pi it} \int_{\rho}^{\infty} e^{r e^{-i(\frac{\pi}{2}+\theta')} }\, \partial^{\alpha}_\xi \tilde R(\frac{r}{t}e^{-i(\frac{\pi}{2}+\theta')},x,\xi)\,  e^{i(\frac{\pi}{2}+\theta')} dr \\
+ \frac{1}{2\pi it} \int_{\rho}^{\infty} e^{r e^{i(\frac{\pi}{2}+\theta')} }\, \partial^{\alpha}_\xi \tilde R(\frac{r}{s}e^{i(\frac{\pi}{2}+\theta')},x,\xi)\, e^{-i(\frac{\pi}{2}+\theta')} dr
+\frac{1}{2\pi it} \int_{-\frac{\pi}{2}-\theta^\prime}^{\frac{\pi}{2}+\theta^\prime} e^{\rho e^{i\beta} }\, \partial^{\alpha}_\xi \tilde R(\frac{\rho}{s} e^{i\beta},x,\xi) \, \rho^{-1} e^{i\beta} d\beta
\end{split}
\end{equation*}
By taking to the account that
$R(\lambda,x,\xi)\in\Hyp_{1,0;\frac 1\delta}^{-\delta,-\delta}(\RR^d \times \RR^ d\times \Sigma_{ \theta+\frac\pi 2})
$ (see the Definition B.8 in \cite{BG1} for the definition of the symbol class $\Hyp_{\rho,\delta;\gamma}^{m,m_0}(U\times \RR^ d\times\Lambda ) $)
and $m(\xi)\in S^\kappa_{1,0}(\RR^ d\times \RR^ d)$ together with Leibniz's formula , we get,
\begin{equation*}
\begin{split}
\lk|\partial^{\alpha}_\xi\tilde \CP(x,\xi,t)\rk| \le\ \frac{1}{2t\pi} \int_{\rho}^{\infty} e^{- r\sin\theta'}\,\lk| \partial^{\alpha}_\xi\tilde R(\frac{r}{t}e^{-i(\frac{\pi}{2}+\theta')},x,\xi)\rk|\,dr \\
+ \frac{1}{2t\pi} \int_{\rho}^{\infty} e^{- r\sin\theta' }\, \lk|\partial^{\alpha}_\xi \tilde R(\frac{r}{t}e^{i(\frac{\pi}{2}+\theta')},x,\xi)\rk|\, dr
+\frac{\rho^{-1}}{2t\pi} \int_{-\frac{\pi}{2}-\theta^\prime}^{\frac{\pi}{2}+\theta^\prime}e^{\rho \cos\beta }\, \lk|\partial^{\alpha}_\xi\tilde R(\frac{\rho}{t} e^{i\beta},x,\xi)\rk| \, d\beta
\\\le \frac{1}{t\pi} \int_{\rho}^{\infty} e^{- r\sin\theta'}\sum_{|\alpha|\le d,\varrho\le\alpha}C^\alpha_\varrho\lk|\partial^{\alpha-\varrho}_\xi R(\frac{r}{t}e^{-i(\frac{\pi}{2}+\theta')},x,\xi)\rk|\lk|\partial^{\varrho}_\xi m(\xi)\rk|\,dr\\+\frac{\rho^{-1}}{2t\pi} \int_{-\frac{\pi}{2}-\theta^\prime}^{\frac{\pi}{2}
+\theta^\prime} \sum_{|\alpha|\le d,\varrho\le\alpha}C^\alpha_\varrho\lk|\partial^{\alpha-\varrho}_\xi R(\frac{\rho}{t} e^{i\beta},x,\xi)\rk|\lk|\partial^{\varrho}_\xi m(\xi)\rk|\, d\beta
\\\le \frac{1}{t\pi} \int_{\rho}^{\infty} e^{- r\sin\theta'}\sum_{|\alpha|\le d,\varrho\le\alpha}C^\alpha_\varrho\gr |\xi|+|\frac{r}{t}|^ {\frac 1 \delta} \gl ^{-\delta-|\alpha-\varrho|}\gr |\xi| \gl ^{\kappa-|\varrho|}\,dr\\+\frac{\rho^{-1}}{2t\pi} \int_{-\frac{\pi}{2}-\theta^\prime}^{\frac{\pi}{2}
+\theta^\prime} \sum_{|\alpha|\le d,\beta\le\alpha}C^\alpha_\varrho\gr |\xi|+|\frac{\rho}{t}|^ {\frac 1 \delta} \gl ^{-\delta-|\alpha-\varrho|}\gr |\xi| \gl ^{\kappa-|\varrho|}\, d\beta
\\ \le C_1\frac{\gr |\xi| \gl ^{-\delta-|\alpha|+\kappa}}{t\pi} \int_{\rho}^{\infty} e^{- r\sin\theta'}\,dr+C_2\frac{\gr |\xi| \gl ^{-\delta-|\alpha|+\kappa}\rho^{-1}}{2t\pi} \int_{-\frac{\pi}{2}-\theta^\prime}^{\frac{\pi}{2}+\theta^\prime}e^{\rho \cos\beta }  \, d\beta=\frac{C}{t}\gr |\xi| \gl ^{-\delta-|\alpha|+\kappa}.
\end{split}
\end{equation*}
\\

Now let
$$
\tilde k_j(x,z,t) = \lk(\CF^ {-1} \tilde \CP_j(x,\cdot,t)\rk)(z), \quad t\ge 0, x,z\in\RR^d,
$$
where $\tilde \CP_j(x,\xi,t)=\tilde \CP(x,\xi,t)\phi_j(\xi).$
A straightforward computation gives that $\tilde k_j(\,\cdot\,,z\,\cdot\,t)$ is the kernel of the operator $\tilde \CP_j(\,\cdot\,,D,t)=\text{Op}[\tilde \CP_j(.,\xi,t)]$, i.e.\
$$\tilde\CP_j(x,D,t) u =\int_{\RR^d} \tilde k_j(x,x-z,t)u(z)\, dz, \quad t\ge 0, x,z\in\RR^d.
$$
The aim is to calculate the $L^ \infty$--norm of $\tilde\CP_j(x,D,t) u$.
From above we know that for all $u\in L^ \infty(\RR^d)$ 
\DEQSZ\label{heresss1}
\sup_{x\in{\RR^d}} \lk|\tilde \CP_j(x,D,t) u (x)\rk| &\le&  |u|_{L^\infty(\RR^d)}\sup_{x\in{\RR^d}} \int_{\RR^d} \lk| \tilde k_j(x,x-z,t) \rk|\, dz\,
\EEQSZ
If $ \lk| \tilde k_j(x,x-z,t) \rk|  \lesssim z^ {-d-1}$, then the RHS is integrable.
From elementary calculations we know for any multiindex $\alpha$ (see Lemma 5.14 \cite{pseudo2})
\DEQS
(i 2 \pi z) ^ {\alpha} \tilde k_j(x,z,t) = \lk( \CF^ {-1} \partial _\xi^ \alpha\tilde \CP_j(x,\xi,t)\rk)(z) .
\EEQS
Now we get by straightforward calculations for  $t>0$
\DEQS
\lqq{ \int_{\RR^d} \lk| \partial _\xi^ \alpha \tilde\CP_j(x,\xi,t)\rk|\, d\xi }
&&\\
\hspace{1cm}&=& \int_{2^ {j-1} \le |\xi|\le 2^ {j+1}} \lk| \partial _\xi^ \alpha \tilde\CP_j(x,\xi,t)\rk|\, d\xi
\le \int_{2^ {j-1} \le |\xi|\le 2^ {j+1}} \lk| \partial _\xi^ \alpha \tilde\CP(x,\xi,t)\rk|\, d\xi
\\ &\le& \frac{C}{t}\int_{2^ {j-1} \le |\xi|\le 2^ {j+1}} \gr \xi \gl^{-\delta-|\alpha|+\kappa}\, d\xi
\\&\lesssim &\frac{C}{t} 2^ {-(\delta-\kappa+|\alpha|)j}2^{jd}.
\EEQS
\del{
Consider (see section 4.2 in \cite{pseudo2} for more details about the symbol of the composition of operators)
\DEQSZ\label{esti-1}
\lk|\int_{\RR^d} \tilde\CP_j(x,\xi,t)\, d\xi\rk| &=&\lk|\int_{\RR^d}\int_{\RR^d}\int_{\RR^d}\phi_j(\eta) e^{-ix.\eta}e^{ix.\xi}\CP(x,\xi,t)\, d\eta\, dx\, d\xi\rk|
\nonumber
\\ &=& \lk|\int_{\RR^d}\int_{\RR^d}\int_{\RR^d}\phi_1(\lambda\eta) e^{-ix.\eta}e^{ix.\xi}\CP(x,\xi,t)\, d\eta\, dx\, d\xi\rk|,
\EEQSZ
where $\lambda=2^{-j+1}$. By using substitution method with $\lambda\eta=\hat{\eta}$, $\frac{x}{\lambda}=\hat{x}$ and $\lambda\xi=\hat{\xi}$ accordingly, we get
\DEQS
\lk|\int_{\RR^d} \CP_j(x,\xi,t)\, d\xi\rk| &=&\lambda^{-d}\lk|\int_{\RR^d}\int_{\RR^d}\int_{\RR^d}\phi_1(\hat{\eta}) e^{-ix.\frac{\hat{\eta}}{\lambda}}e^{ix.\xi}\CP(x,\xi,t)\, d\hat{\eta}\, dx\, d\xi\rk|
\\ &=&\lambda^{-d}\lambda^{d}\lk|\int_{\RR^d}\int_{\RR^d}\int_{\RR^d}\phi_1(\hat{\eta}) e^{-i\lambda\hat{x}.\frac{\hat{\eta}}{\lambda}}e^{i\lambda\hat{x}.\xi}\CP(\lambda\hat{x},\xi,t)\, d\hat{\eta}\, d\hat{x}\, d\xi\rk|
\\ &=&\lambda^{-d}\lk|\int_{\RR^d}\int_{\RR^d}\int_{\RR^d}\phi_1(\hat{\eta}) e^{-i\hat{x}.\hat{\eta}}e^{i\lambda\hat{x}.\frac{\hat{\xi}}{\lambda}}\CP(\lambda\hat{x},\frac{\hat{\xi}}{\lambda},t)\, d\hat{\eta}\, d\hat{x}\, d\hat{\xi}\rk|
\\ &=&\lambda^{-d}\lk|\int_{\RR^d}\int_{\RR^d}\int_{\RR^d}\phi_1(\hat{\eta}) e^{-i\hat{x}.\hat{\eta}}e^{i\hat{x}.\hat{\xi}}\CP(\lambda\hat{x},\frac{\hat{\xi}}{\lambda},t)\, d\hat{\eta}\, d\hat{x}\, d\hat{\xi}\rk|
\\ &\le& 2^{jd}2^{d}\Leb(\mathbf{B}_2(0)) \sup_{\hat{\eta}\in \mathbf{B}_2(0)}\lk|\phi_1(\hat{\eta}) \rk|\lk|\int_{\RR^d}\int_{\RR^d} e^{-i\hat{x}.(\hat{\eta}-\hat{\xi})}\CP(\lambda\hat{x},\frac{\hat{\xi}}{\lambda},t)\, d\hat{x}\, d\hat{\xi}\rk|,
\EEQS
now again applying substitution method with $\tilde{\xi}=\hat{\eta}-\hat{\xi}$
\DEQSZ\label{esti-2}
\ldots&=&2^{jd}2^{d}\Leb(\mathbf{B}_2(0)) \sup_{\hat{\eta}\in \mathbf{B}_2(0)}\lk|\phi_1(\hat{\eta}) \rk|\lk|\int_{\RR^d}\int_{\RR^d} e^{-i\hat{x}.\tilde{\xi}}\CP(\lambda\hat{x},\frac{\hat{\eta}-\tilde{\xi}}{\lambda},t)\, d\hat{x}\, d\tilde{\xi}\rk|
\\ &=& C2^{jd}\sup_{\hat{\eta}\in \mathbf{B}_2(0)}\lk|\phi_1(\hat{\eta}) \rk|\lk|\int_{\RR^d}\int_{\RR^d} e^{-i\hat{x}.\tilde{\xi}}\CP(\lambda\hat{x},\frac{\hat{\eta}-\tilde{\xi}}{\lambda},t)\, d\hat{x}\, d\tilde{\xi}\rk|
\nonumber
\\ &=& C2^{jd}\lk|\text{Os}-\int_{\RR^d}\int_{\RR^d} e^{-i\hat{x}.\tilde{\xi}}\CP(\lambda\hat{x},\frac{\hat{\eta}-\tilde{\xi}}{\lambda},t)\, d\hat{x}\, d\tilde{\xi}\rk|
\nonumber
\\ &\le& C2^{jd}\|\CP\|_{\mathcal{A}_{0,d}^0},
\EEQSZ
where the notation $\text{Os}-$ represent the corresponding oscillatory integral (see theorem 4.9 in \cite{pseudo2}) and the norm $\|.\|_{\mathcal{A}_{\tau,d}^m}$ is the norm defined for the space of amplitudes $\mathcal{A}_{\tau}^m(\RR^d\times\RR^d)$ (see definition 4.8 in \cite{pseudo2}).
}
Hence, 
\DEQS
\lk|\tilde k_j(x,z,t)\rk| & \le& \,| z| ^ {-|\alpha|} \int_{\RR^d}  \lk| \partial _\xi^ \alpha \tilde\CP_j(x,\xi,t)e  ^{2\pi i\xi x} \rk|\, d\xi
\\ &\le
& \,| z| ^ {-|\alpha|} \int_{\RR^d}  \lk| \partial _\xi^ \alpha \tilde\CP_j(x,\xi,t) \rk|\, d\xi\le   \frac{C}{t} 2^ {-(\delta-\kappa+|\alpha|)j}2^{jd}\, | z| ^ {-|\alpha|}.
\EEQS

Going back to \eqref{heresss}, we get for $u\in L^ \infty(\RR^d)$ and $|\alpha|=(d+1)>d$,
\DEQSZ
\sup_{x\in{\RR^d}} \lk| \tilde\CP_j(x,D,t) u (x)\rk| &\le&  |u|_{L^\infty(\RR^d)}\sup_{x\in{\RR^d}} \int_{\RR^d} \lk| \tilde k_j(x,x-z,t) \rk|\, dz\,
\nonumber
\\&\le &|u|_{L^\infty(\RR^d)}\frac{C}{t} 2^ {-(\delta-\kappa+d+1)j}2^{jd}\sup_{x\in{\RR^d}} \int_{\RR^d}| z| ^ {-|\alpha|}\, dz
\nonumber
\\&\lesssim &\frac{C}{t} 2^ {-(\delta-\kappa+1)j}|u|_{L^\infty(\RR^d)}.
\EEQSZ
Now by combining the Remark \eqref{opbou} together with Lemma 6.22 in \cite{pseudo2},
\DEQSZ\label{twooperators1}
\lk\|\phi_k(D) \tilde\CP_j(x,D,t) \rk\|_{L(L^\infty,L^\infty)}\lesssim  \frac{K_{d}}{t}2^ {\min(k,j)(-(\delta-\kappa)) }2^ {-|j-k|l },
\EEQSZ
for all $j,k\in \mathbb{N}_0$ and $l\in \mathbb{N}$. In other words,
\DEQSZ\label{twooperators21}
\sup_{x\in{\RR^d}} \lk| \phi_k(D)\tilde\CP_j(x,D,t) u (x)\rk|\lesssim  \frac{K_{d}}{t}2^ {\min(k,j)(-(\delta-\kappa)) }2^ {-|j-k|l }|u|_{L^\infty(\RR^d)}.
\EEQSZ
We know that $$u=\sum_{j=0}^\infty u_j,$$ where $u_j=\phi_j(D)u$ and $u\in L^\infty(\RR^d)$. On the other hand we know that $$\tilde\CP(x,D,t)u=\sum_{j=0}^\infty\tilde\CP(x,D,t)\phi_j(D)u=\sum_{j=0}^\infty\tilde\CP_j(x,D,t)u,$$
(see result (5.15) in Remark 5.13, \cite{pseudo2}). Therefore by using the fact that $\supp \phi_j \cap \supp \phi_k=\emptyset$ for $|j-k|>1$,
$$\tilde\CP(x,D,t)u=\sum_{j=0}^\infty\tilde\CP_j(x,D,t)u=\sum_{j=0}^\infty\tilde\CP_j(x,D,t)\hat{u}_k,$$
where $\hat{u}_k=u_{k-1}+u_{k}+u_{k+1}$ and we have set $u_{-1}\equiv0$ (see proof of Theorem 6.19 in \cite{pseudo2}). Now using $\eqref{twooperators21}$,
$$2^{\gamma k}\lk| \phi_k(D)\tilde\CP_j(x,D,t) u\rk|_{L^\infty(\RR^d)}\lesssim  \frac{K_{d}}{t}2^{\gamma k}2^ {\min(k,j)(-(\delta-\kappa))  }2^ {-|j-k|l }|u|_{L^\infty(\RR^d)},$$ where
$$
2^ {\gamma k+\min(k,j)(-(\delta-\kappa)) -|j-k|l } = \lk\{ \begin{array}{rcl} 2^ {\gamma k-j(\delta-\kappa)-|j-k|l} <2^{-(l-\delta+\kappa)|j-k|},&\If & k> j,\\
2^ {-(\delta-\kappa-\gamma)k-|j-k|l},&\If & k\le j.
\end{array}\rk.
$$
Now the assertion follows by the definition of the norm
$B_{\infty,\infty}^{\gamma}(\RR^d)$. In particular for $l>3$, we have  for $u\in L^\infty(\RR^d)\cap B_{\infty,\infty}^{\gamma}(\RR^d)$ such that
\DEQS
\lk|  \tilde\CP_t u\rk|_{B^{\gamma}_{\infty,\infty}(\RR^d)} &=& \sup_{k\in\NN}  2 ^{\gamma k}\lk| \phi_k(D)\tilde\CP(x,D,t) u\rk|_{L^\infty(\RR^d)}
\\
\le  \sup_{k\in\NN}  2 ^{\gamma k}\sum_{j=0}^\infty\lk\|\phi_k(D)\tilde \CP_j(x,D,t) \rk\|_{L(L^\infty,L^\infty)}|\hat{u}_k|_{L^\infty(\RR^d)}
&\le& \frac{K_{d}}{t}\sup_{k\in\NN}  2 ^{\gamma k}\sum_{j=0}^\infty 2^{-|j-k|}  |u|_{L^\infty(\RR^d)}
\\
 = \frac{K_{d}}{t}|u|_{L^\infty(\RR^d)}\le \frac{K_{d}}{t}|u|_ {B_{\infty,\infty}^{\gamma}(\RR^d)}
.\EEQS
}

\end{proof}

If $L$ is an $\alpha $ stable process, the problem appears that only the moments up to $p<\alpha$ are bounded. Therefore,
the symbol is in higher dimensions not uniformly differentiable up to order $d+1$ in any neighborhood of $\xi=0$. However,
if $\alpha>0$, then this problem can be solved.

\begin{cor}\label{sfcor2}
Let $L$ be  a  L\'evy process   $L$ with Blumenthal--Getoor index $1<\delta<2$ of order $2d+4$. Let  $\sigma\in C^{d+3}_b(\RR^d)$  be  bounded away from zero and $ b\in C^{d+3}_b(\RR^d)$.
Then, the Markovian semigroup $(\CP_t)_{t\ge 0}$ of the process defined by \eqref{eq1sf} is strong Feller.
\end{cor}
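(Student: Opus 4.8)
\emph{Proof strategy.} The hypotheses here are exactly those of Corollary \ref{sftheo1} except that $L$ is no longer assumed square integrable, so the plan is to locate where that assumption entered and bypass it. By \eqref{schill} the symbol of \eqref{eq1sf} is $a(x,\xi)=\psi(\sigma^T(x)\xi)$, and the assumption that the generalised Blumenthal--Getoor index of order $2d+4$ of $L$ is well defined and lies in $(1,2)$ means precisely that $\psi$ is $2d+4$ times continuously differentiable away from the origin and satisfies the symbol estimates defining $\CA^{\delta}_{2d+4,d+3;1,0}$ and $\Hyp^{\delta}_{2d+4,d+3;1,0}$ on every set $\{|\xi|\ge r\}$, $r>0$; since $\sigma\in\cal^{d+3}_b(\RR^d)$ is bounded and bounded away from zero, $a$ inherits these estimates on $\{|\xi|\ge r\}$ for every $r>0$ and stays of type $(0,\theta)$. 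The only feature that can be lost relative to the square--integrable case is differentiability of $a$ in a neighbourhood of $\xi=0$, and this is exactly the issue flagged in the remark after the proof of Theorem \ref{main22}.

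Accordingly, I would run the proof of Corollary \ref{sftheo1} essentially verbatim, with one modification. Fix $R$ large (possible because $a$ is elliptic for $|\xi|\ge R$, cf. Theorem \ref{invert22}) and a cutoff $\chi\in\cal^\infty_b(\RR_0^+)$ with $\chi\equiv0$ on $[0,1]$ and $\chi\equiv1$ on $[2,\infty)$, and split $a(x,D)=\tilde a(x,D)+b(x,D)$ with $\tilde a(x,\xi):=a(x,\xi)\chi(\xi/R)$ and $b(x,\xi):=a(x,\xi)(1-\chi(\xi/R))$ --- the same splitting used in the proof of Theorem \ref{main22}. Because $\tilde a$ vanishes on $\{|\xi|\le R\}$, it is a globally smooth symbol lying in $\CA^{\delta}_{2d+4,d+3;1,0}(\RR^d\times\RR^d)\cap\Hyp^{\delta}_{2d+4,d+3;1,0}(\RR^d\times\RR^d)$ and of type $(0,\theta)$, so Theorem \ref{main22} and the contour--integral argument of Theorem \ref{sftheo111} apply to $\tilde a(x,D)$ unchanged and yield the smoothing estimate $|\tilde\CP_t u|_{B^{\gamma}_{p,q}}\le \tfrac{C}{t}\,|u|_{B^{\gamma-\delta}_{p,q}}$ for the semigroup it generates. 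The symbol of $b(x,D)$ is supported in $\{|\xi|\le 2R\}$ and bounded, but only H\"older in $\xi$ near $0$; the fact that $b(x,D)$ is nonetheless bounded on every $B^m_{p,q}(\RR^d)$ is precisely the extension of Theorem 9.7 of \cite{wong} to symbols whose derivatives are singular at $\{0\}$ that is invoked after the proof of Theorem \ref{main22}. Granting this, the bounded perturbation theorem (Theorem 2.1 in \cite[Chapter~3.2]{Pazy:83}, used as in that proof) shows that $a(x,D)=\tilde a(x,D)+b(x,D)$ generates an analytic semigroup $(\CP_t)_{t\ge0}$ on $B^m_{p,q}(\RR^d)$, the resolvent bound of Theorem \ref{sftheo111} carries over, and one obtains $|\CP_t u|_{B^{\gamma}_{p,q}}\le \tfrac{C}{t}|u|_{B^{\gamma-\delta}_{p,q}}$ for all $1\le p,q<\infty$ and $\gamma\in\RR$. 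Iterating this $n$ times and inserting the embeddings $\cal^{\gamma}(\RR^d)=B^{\gamma}_{\infty,\infty}(\RR^d)$ and $B^{\gamma+d/p}_{p,q}(\RR^d)\hookrightarrow B^{\gamma}_{\infty,\infty}(\RR^d)$ exactly as in the proof of Corollary \ref{sftheo1} gives, for $n\ge d$ and $0<\gamma<n\delta-d$, that $\CP_t\bigl(\CB_b(\RR^d)\bigr)\subset\cal^{\gamma}_b(\RR^d)\subset C_b(\RR^d)$ for every $t>0$, i.e.\ the strong Feller property.

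The step I expect to be the real obstacle is the boundedness of the low--frequency operator $b(x,D)$ on $B^m_{p,q}(\RR^d)$ when its symbol fails to be smooth at the origin; this is the entire reason one cannot simply quote Corollary \ref{sftheo1}, and it is what makes the extension of Wong's theorem from the remark after Theorem \ref{main22} indispensable. If one prefers not to rely on that extension, an alternative is to keep the Duhamel identity $\CP_t=\tilde\CP_t+\int_0^t\tilde\CP_{t-s}\,b(x,D)\,\CP_s\,ds$ and to defeat the non--integrable singularity $t^{-n}$ of the iterated smoothing by splitting $\int_0^t=\int_0^{t/2}+\int_{t/2}^t$: on $[0,t/2]$ one has $\|\tilde\CP_{t-s}\|_{L^\infty(\RR^d)\to\cal^{\gamma}_b(\RR^d)}\lesssim (t/2)^{-n}$ uniformly, while on $[t/2,t]$ one uses, inductively along the Dyson series, that the remaining factors already map $\CB_b(\RR^d)$ into $C_b(\RR^d)$ for positive time and that $\tilde\CP$ is a contraction on $C_b(\RR^d)$; summing the series in $C_b(\RR^d)$ again yields $\CP_t\bigl(\CB_b(\RR^d)\bigr)\subset C_b(\RR^d)$.
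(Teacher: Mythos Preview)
Your route is genuinely different from the paper's. The paper does not work in frequency space at all; it decomposes the L\'evy process probabilistically as $L=L_0+L_1$ by jump size, where $L_0$ collects the jumps in $\{|z|\le 1\}$ (hence is square integrable, so Theorem~\ref{sftheo111} and Corollary~\ref{sftheo1} apply directly to the semigroup $\CP^0$ of the SDE driven by $L_0$) and $L_1$ is the compound Poisson process carrying the large jumps. The full solution is then rebuilt by interlacing: between consecutive large-jump times $T_i$ the process evolves under $\CP^0$, and at each $T_i$ one applies a shift operator $B_{Y_i}$. This yields a Duhamel-type identity for $u(t)=\CP_t\phi$ in terms of $\CP^0$, the bounded jump operator, and a drift correction, after which the smoothing of $\CP^0$ (Corollary~\ref{smooth1} with $m(D)=D$ and $m(D)=\CB_y$) plus a Gr\"onwall argument in $B^\gamma_{\infty,\infty}$ gives the strong Feller property.

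Your frequency splitting $a=\tilde a+b$ is the analytic mirror of this: lack of square integrability of $L$ is exactly failure of $\psi$ to be $C^2$ at $\xi=0$, and you isolate that singularity in the compactly $\xi$-supported piece $b$. The trade-off is that the paper's ``bad'' piece---the compound Poisson generator $f\mapsto\int_{|z|>1}[f(\cdot+\sigma(\cdot)z)-f(\cdot)]\,\nu(dz)$---is trivially bounded on $L^\infty$ and $C_b$ because $\nu_1$ is finite, whereas your $b(x,D)$ needs the extension of Wong's multiplier theorem that you correctly flag as the crux. Once that is in hand your perturbative argument is cleaner than the interlacing bookkeeping. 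One point in your sketch to tighten: to get the smoothing bound $|\CP_t u|_{B^\gamma_{p,q}}\le Ct^{-1}|u|_{B^{\gamma-\delta}_{p,q}}$ for the \emph{perturbed} generator you need $R(\lambda,\tilde a(x,D)+b(x,D))$ to map $B^{\gamma-\delta}_{p,q}\to B^\gamma_{p,q}$ with norm uniform in $\lambda$ along the contour; the bounded-perturbation theorem alone gives analyticity on each fixed space but not this cross-space estimate, so you should insert a Neumann series (or second resolvent identity) step using that $b(x,D)$ is bounded on both $B^{\gamma-\delta}_{p,q}$ and $B^\gamma_{p,q}$.
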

\del{Similarly, we get also estimates for the density.
\begin{cor}\label{density}
Let us assume that
$L$ is a  L\'evy process with Blumenthal--Getoor index $1<\delta<2$ of order $2d+4$, $\sigma\in C^{d+3}_b(\RR^d)$ is bounded away from zero,
and $ b\in C^{d+3}_b(\RR^d)$.
Then, the density of the process is $n$ times differentiable. In particular, we have
$$
\Big|\frac {\partial ^\alpha }{\partial _y^\alpha} \, p_t(x,y) \Big|\le \frac  {C(n,d)} {t^n} ,
$$
where $\alpha $ is a multiindex of length $\theta$ and $n=[\frac {\theta+d}\delta]+1$.
\end{cor}
\begin{proof}
Note, that we can write $p_t(x,y)=\la \CP_t\delta_x,\delta_y\ra$.
We know (see \cite[p. ]{runst}) that $\delta_x\in B_{r,r}^{-d(1-\frac 1r)}(\RR^d)$ for any $r\in(1,\infty)$. Hence,
Therefore, we have to give an estimate of
$$
|\CP_t\delta_x|_{B_{q,q}^{|\alpha|+d(1-\frac 1p)}}
$$
where $\frac 1p+\frac 1q\le1$. Since $d(1-\frac 1q)+d(1-\frac 1p)+\theta=d+\theta$ and the Blumenthal-Getoor index is $\delta$, the assertion is shown. 

\end{proof}}
\del{

Since we need it for the proof of the corollaries, we list here two important properties.
\begin{rem}\label{inthoelder}(compare \cite[p.\ 201, Section 2.7.2, Theorem1]{triebelint})
For $t_1,t_2\in\NN$ and $t_1<t_2$ we have
$$
\lk( C_b^{t_1}(\RR^d ), C_b^{t_2}(\RR^d )\rk)_{\theta,\infty} = C_b^{t}(\RR^d ),
$$
where $t=(1-\theta)t_1+\theta t_2$.
\end{rem}
\begin{rem}\label{intungl}(compare \cite[p.\ 25, Section 1.3.1, Theorem 1 -(g)]{triebelint})
For any $\theta\in (0,1)$, $1<q\le \infty$ and couple of interpolation spaces $A_1$ and $A_2$, there exists a constant $c=c(1)>0$ such that
$$
\lk| a\rk|_{(A_1,A_2)_{\theta,q}}\le c |a|_{A_1}^\theta |a|_{A_2}^{1-\theta}.
$$
\end{rem}
\begin{rem}
One has to use the Besov spaces, since $C_b^m(\RR^d)$ does not have the weak multiplier property (see \cite[Chapter 2.2.3, p. 40, Remark 1 and Remark 2]{triebel1983}).
\end{rem}
}
\medskip
\del{
\begin{proof}[Proof of Corollary \ref{sfcor1}]
Due to the assumption on the symbol we know $a(x,\xi):=\psi(\sigma(x)^ T \xi)\in S_{...}^ {...}(\RR^d\times\RR^d)$.
Moreover, due to Theorem \ref{main2} we know for all $\delta\in\RR$  $(\CP_t)_{t\ge 0}$ is an analytic semigroup on $B_{\infty,\infty}^ \delta(\RR^d)$.
Hence, we have
\DEQS
\lk| a(x,D) \CP_t \phi\rk|_{B_{\infty,\infty}^\delta }\le \frac 1t \lk| \phi\rk|_{B_{\infty,\infty}^\delta }.
\EEQS
On the other side, straightforward calculations show that
$$
\frac { a(x,\xi)}{\la \xi\ra ^\alpha } \in S_{d+1,1}^{0,0}(\RR^d,\RR ^d ).
$$
Hence
\DEQS
&\lk| (I-\la D\ra ^\alpha) \CP_t \phi\rk|_{B_{\infty,\infty}^\delta }=\lk|  (I-\la D\ra ^\alpha)a(x,D)^{-1} \, a(x,D) \CP_t \phi\rk|_{B_{\infty,\infty}^\delta }
\\
\le& \lk\|  (I-\la D\ra ^\alpha)a(x,D)^{-1} \rk\|_{L(B_{\infty,\infty}^\delta,B_{\infty,\infty}^\delta)}
\, \lk| a(x,D)   \CP_t \phi\rk|_{B_{\infty,\infty}^\delta }
 \le
\frac 1t \lk| \phi\rk|_{B_{\infty,\infty}^\delta }.
\EEQS
Interpolation gives
\DEQS
\lk| (I-\la D\ra ^\alpha)^\gamma \CP_t \phi\rk|_{B_{\infty,\infty}^\delta }
& \le &
\frac 1{t^\gamma} \lk| \phi\rk|_{B_{\infty,\infty}^\delta }.
\EEQS
In addition, since $B_{\infty,\infty}^\delta (\RR^d)$ has the weak multiplier property and $\psi$ has generalized Blumenthal--Getoor index $\alpha$, we have
\DEQS
\lk| (I-\la D\ra ^\gamma) \CP_t \phi\rk|_{B_{\infty,\infty}^\delta }
& \le &
\frac 1{t^\frac \gamma\alpha } \lk| \phi\rk|_{B_{\infty,\infty}^\delta }.
\EEQS
Finally, note that due to the equivalence of the norms $B_{\infty,\infty}^s(\RR^d)$ and $C_b^{\delta}(\RR^d)$ (see \cite[p.\ 14, Proposition]{Runst+Sickel_1996})
and the continuous embedding of $L^\infty(\RR^d)$ in $B_{\infty,\infty}^{-\gamma }(\RR^d)$,
we know that
\DEQS
\lk| \CP_t \phi\rk|_{\cal^ 0_b}\le \lk| \CP_t \phi\rk|_{B_{\infty,\infty}^\gamma }
\le \frac C {t ^{2\gamma} } \lk|\phi\rk|_{B_{\infty,\infty}^{-\gamma }}\le \frac C {t ^{2\gamma} } \lk|\phi\rk|_{L^\infty},
\EEQS
which gives the assertion.
\end{proof}
}
\begin{proof}[Proof of Corollary \ref{sfcor2}]

In order to deal with the large jumps we decompose the \levy process into one \levy process recollecting the jumps smaller than one and one \levy process, recollecting the jumps larger than one.
 Therefore, let $\nu_0$ be the L\' evy measure defined by
$$
\nu_0:\CB(\RR^d)\ni U \mapsto \nu(U\cap Z_0),
$$
and
$\nu_1$ be the L\' evy measure defined by
$$
\nu_1:\CB(\RR^d)\ni U \mapsto \nu(U\cap Z_1),
$$
where $Z_0=\{ z\in \RR^ d : |z|\le 1\}$ and $Z_1=\{ z\in \RR^ d : |z|> 1\}$.
Since the proof of this theorem mainly rely on the analysis of the decomposition of the small and large jumps it is important to decompose also the probability space $\mathfrak{A}=(\Omega,\CF,\{\CF_t\}_{t\in [0,T]},\PP)$. Let  $\tilde \eta_0$ be a compensated Poisson random measure on $(Z_0\times \RR_+,\mathcal{B}(Z_0)\otimes \mathcal{B}({\mathbb{R}_+}))$ over
$\mathfrak{A} ^0=(\Omega^0,\CF^0,\{\CF^0_t\}_{t\in [0,T]},\PP^0)$ with intensity measure $\nu_0$ where
$$\CF^0=\sigma \{ \eta(B ,[0,s]): B \in \CB(Z_0), s\in[0,T] \}
$$
and for $ 0\leq t\leq T$
$$\CF^0_t=\sigma \{ \eta(B ,[0,s]): B \in \CB(Z_0), s\in[0,t] \}.
$$
Furthermore, let $\tilde\eta_1$ be a compensated Poisson random measure on $(Z_1\times \RR_+,\mathcal{B}(Z_1)\otimes \mathcal{B}({\mathbb{R}_+}))$ over $\mathfrak{A} ^1=(\Omega^1,\CF^1,\{\CF^1_t\}_{t\in [0,T]},\PP^1)$ with finite intensity measure $\nu_1$ where
$$\CF^1=\sigma \{ \eta(B ,[0,s]): B \in \CB(Z_1), s\in[0,T] \}$$
and for $ 0\leq t\leq T$
$$
\CF^1_t=\sigma \{ \eta(B ,[0,s]): B \in \CB( Z_1), s\in[0,t] \}.
$$
Let $\Omega:=(\Omega^0\times\Omega^1)$, $\mathcal{F}:=\CF^0\otimes \CF^1$, $\CF_t:=\CF^0_t\otimes \CF^1_t$, $P=P^0\otimes P^1$ and $\EE=\EE^0\otimes \EE^1$.
We denote the to $\nu_0$ and $\nu_1$ associated Levy processes by $L_  0 $ and $L_ 1 $.
It is clear by the independent scattered property of a Poisson random measure, that  $L_ 0 $ and $L_1 $ are independent.
Since $\nu_1$ is a finite  measure, $L_1$ can be represented as a sum
of its jumps. In particular,
let $\rho=\nu_1(\RR^d )$, $\{ \tau_n:n\in\NN\}$ be a family of independent exponential distributed random variables with parameter $\rho$,
\DEQSZ\label{stoppingtimes}
 T_n=\sum_{j=1}^n \tau_j,\quad n\in\NN,
\EEQSZ
and   $\{ N(t):t\ge 0\}$ be the counting process defined by
$$ N(t) :=\sum_{j=1}^\infty 1_{[T_j,\infty)}(t),\quad t\ge 0.
$$
Observe, for any $t>0$, $N(t)$ is a Poisson distributed random variable with parameter $\rho t$.
Let $\{ Y_n:n\in \NN\}$ be a family of independent, $\nu_0/\rho$ distributed random variables. Then
the \levy process $L_1$ given by (see \cite[Chapter 3]{cont})
$$
L_ 1(t)=\int_0^t \int_{Z_1} z\,\tilde \eta_1(dz,ds),\quad t\ge 0,
$$
can be represented as
$$
L_ 1(t) = \bcase - z_0 t & \mbox{ for } N(t)=0,\\
\sum_{j=1}^{N(t)} Y_j -z_0 t & \mbox{ for } N(t)>0,
\ecase
$$
where $z_0= \int_{\RR^ d } z\, \nu_1(dz)$.
Let $(\CP_t^ 0)$ the Markovian semigroup of the solution process $X_0^x$ given by
\DEQSZ\label{eq1sfp0}
\lk\{\barray
dX_0^ x(t) &=& b(X_0(t-))\, dt +\sigma(X_0(t-)) dL_0(t)
\\
X_0^ x(0)&=&x,\quad x\in\RR^ d.
\earray\rk.
\EEQSZ
Now, we have for $u(t)=\EE \phi( X(t))$, where $X$ is the solution to the original equation \eqref{eq1sf} and the following identity
\DEQSZ\label{toooshow}
u(t)  = \EE(\CP^ 0_t\phi)(x)+\EE \sum_{i=1}^ {N(t)} \CP_{t-T_i}^ 0  B_{Y_i} u(T^-_i) -
\int_0^ t \CP^ 0_{t-s} D u(s) [ z_0]\, ds ,
\EEQSZ
where $(B_{y} \phi)(x)= \phi ( x+y)-\phi(x)$.
To verify formula \eqref{toooshow}, observe that in the time interval $[0,T_1)$ the solution of $u$ is given by
\DEQS
 u(t)  &=& (\CP^ 0_t\phi)(x)+\int_0^ t \CP^ 0_{t-s} D u(s) [ z_0]\, ds , \quad t\in[0,T_1].
\EEQS
In particular, $u$ solves on the time interval  $[0,T_1)$
\DEQSZ\label{erstes}
\lk\{\barray
\dot u(t) & = & 
a(x,D) u(t) +D u(t) z_0\, 
, \quad t\in[0,T_1),
\\
u(0) &=&  \phi.
\earray\rk.
\EEQSZ
Let us denote the solution of \eqref{erstes} on the first time interval $[0,T_1]$ by $u_1$.
At time $T_1$ the first large jump occurred. Hence, on the time interval $[T_1,T_2)$, $u$ solves
\DEQSZ\label{zweittes}
\lk\{\barray
\dot u(t) & = & 
a(x,D) u(t) + Du(t) z_0\, 
 , \quad t\in(T_1,T_2),
\\
u(T_1) &=& \EE u_1(T_1,\cdot + Y_1).
\earray\rk.
\EEQSZ
Let us denote the solution of \eqref{zweittes} by $u_2$.
The variation of constant formula gives for $t\in (T_1,T_2)$ 
$$
u_2(t)= \CP^0_{t-T_1} u_2(T_1)+ \int_{T_1}^t \CP^0_{t-s} D u_2(s)\, z_0\, ds.
$$
Let us put
$$
u(t):= \bcase u_1(t), & \mbox{ if } t\in [0,T_1),\\ u_2(t), & \mbox{ if } t\in [T_1,T_2).
\ecase
$$
Since
$$
u_1(T_1)= \CP^0_{T_1} \phi+ \int_{0}^{T_1} \CP^0_{T_1-s} D u(s)\, z_0\, ds,
$$
and $u_2(T_1,x)=u_1(T_1, x +Y_1)$, $x\in\RR^d$,
it follows
\DEQS
u(t) &=& \CP^0_{t-T_1 }\CP^0_{T_1}\phi +  \CP^0_{t-T_1 }\int_0^{T_1} \CP^0_{T_1-s} D 
 u(s)\, z_0\, ds
\\ &&{} +  \int_{T_1}^t\CP^0_{t-s} D 
 u(s)\, z_0\, ds
+ \EE\CP^0_{t-T_1 } u(T_1^-, \cdot + Y_1) - \EE\CP^0_{t-T_1 } u(T_1^-, \cdot ) .
\\
&=& \CP^0_{t}\phi + \int_{0}^t \CP^0_{t-s} D 
 u(s)\, z_0\, ds +  \EE\CP^0_{t-T_1 }\lk[ u(T_1^-, \cdot + Y_1) - u(T_1^-, \cdot )\rk]
.
 \EEQS
Repeating  this calculations successively for all time intervals gives formula \eqref{toooshow}.

\medskip

Since, given $N(t)=k$, the random variables $\{ Y_1,Y_2,\ldots, Y_{k}\}$ and
$\{ T_1,T_2,\ldots, T_{k}\}$  are mutually independent and $T_i$, $i=1,\ldots,k$, is uniform distributed on $[0,t]$, it follows that
\DEQS
\EE  \sum_{i=1}^ {N(t)} \CP_{t-s}^ 0  B_{Y_i} u(T_i) &=&  \sum_{k=1}^\infty \PP\lk( N(t)=k\rk)\,
            \EE\lk[\sum_{l=1}^ {N(t)} \EE^1\lk[  \CP_{t-T_l}^ 0  B_{Y_l} u(T_l)\mid N(t)=k\rk]\rk]
            \\
            \lqq{  =     \sum_{k=1}^\infty \PP\lk( N(t)=k\rk)\,
             \EE^1\lk[  \sum_{l=1}^{k} \CP_{t-T_l}^ 0  B_{Y_l} u(T_l)\rk]}
            \\
            \lqq{  =     \sum_{k=1}^\infty \PP\lk( N(t)=k\rk)\,
            \EE^1\lk[   \int_0^ t \CP_{t-s}^ 0  B_{Y} u(s)\rk],}&&
            \EEQS
            where $Y$ is distributed as $\nu_0/\rho$.
Thus, we get for $\gamma\le \delta-1$
\DEQS
|u(t)|_{B^ \gamma _{\infty,\infty}}  = \lk|\CP^ 0_t\phi+C\int_0^ t  \CP_{t-s}^ 0  \CB_{y} u(s)\, ds +
\int_0^ t \CP^ 0_{t-s} D u(s)[ z_0]\, ds\rk|_{B^ \gamma _{\infty,\infty}},
\EEQS
where $(\CB_{y} \phi)(x)= \int_{\RR^d} [\phi ( x+y)-\phi(x)]\, \nu_0(dy)$ and $C$ is a constant depending  on $\rho$ and $\nu_0$.
By applying Minkowski inequality and 
Theorem \ref{sftheo1}, resp.\  Corollary \ref{smooth1}
with $m(D)=D$ and $m(D)=\CB_{y}$, gives for some $p>1$ with $n>\frac{\gamma+1+d}{\delta}+1$
\DEQS
|u(t)|_{B^ \gamma _{\infty,\infty}} \le \frac 1{t} |\phi|_{B^0 _{\infty,\infty}}+\int_0^ t  \lk| \CP^ 0_{t-s}\CB_{y} u(s) \rk|_{B^\gamma _{\infty,\infty}}\, ds +
|z_0|\int_0^ t    \lk| \CP^ 0_{t-s}  Du(s) \rk|_{B^ \gamma _{\infty,\infty}}\, ds
\\\le \frac 1{t} |\phi|_{B^0 _{\infty,\infty}}+K_{1}\int_0^ t (t-s)^{-n} \lk| u(s) \rk|_{B^\gamma _{\infty,\infty}}\, ds +
K_{2}|z_0|\int_0^ t   (t-s)^{-n} \lk|u(s) \rk|_{B^ \gamma _{\infty,\infty}}\, ds
\\\le \frac 1{t} |\phi|_{B^0 _{\infty,\infty}}+K(1+|z_0|)\lk[\int_0^ t (t-s)^{-p}\,ds\rk]^{\frac{1}{p}}
\lk[\int_0^ t \lk| u(s) \rk|^{\frac{p}{p-1}}_{B^\gamma _{\infty,\infty}}\, ds \rk]^{\frac{p-1}{p}}
\\\le \frac 1{t} |\phi|_{B^0 _{\infty,\infty}}+\frac{C_1}{t^{\frac{np-1}{p}}}
\lk[\int_0^ t \lk| u(s) \rk|^{\frac{p}{p-1}}_{B^\gamma _{\infty,\infty}}\, ds \rk]^{\frac{p-1}{p}}.
\EEQS
Rearranging gives 
$$|u(t)|^{\frac{p}{p-1}}_{B^ \gamma _{\infty,\infty}}\le\frac 1{t^{\frac{p}{p-1}}} |\phi|^{\frac{p}{p-1}}_{B^0 _{\infty,\infty}}+ \frac{C_2}{t^{\frac{np-1}{p-1}}}
\,\int_0^ t \lk| u(s) \rk|^{\frac{p}{p-1}}_{B^\gamma _{\infty,\infty}}\, ds .
$$
\del{
where $\gamma =\frac \delta\alpha$. Since $\delta <\alpha$, there exists a $\gamma'>0$ such that
\DEQS
|u(t)|^ q_{B^ \delta _{\infty,\infty}}  = \frac 1{t^{q \delta}} |\phi|^q_{B^0 _{\infty,\infty}}+t ^ {\gamma'}\int_0^ t   \lk| \CB_{y} u(s) \rk|^ q_{B^0 _{\infty,\infty}}\, ds +
t ^ {\gamma'}\int_0^ t   \lk|  u(s) z_0\rk|^ q_{B^0 _{\infty,\infty}}\, ds.
\EEQS
Observe, $|(\CB_{y} \phi)(x)| \le |\int_0^y \phi'(x+r)\, dr|$.
}
A simple application of  Gr\"onwall's Lemma gives
$$
|u(t)|_{B^ \gamma _{\infty,\infty}}\le C(t,p,n) |\phi|_{B^0 _{\infty,\infty}}.
$$
By the definition of $B^0 _{\infty,\infty}(\RR^d)$, it follows that  the process is strong Feller.

\end{proof}

\section{The second application: Error Estimates for Monte-Carlo Simulation}
\label{secondapp}
Given the intensity measure, in most of the cases, one does not know the distribution of $L(t)$ for a fixed time point $t\ge 0$.
However, simulating stochastic differential equations driven by  a  \levy process using the explicit or implicit Euler-Marayuama scheme, one has to simulate the increments $\Delta_\tau^nL:=L(n\tau)-L((n-1)\tau)$ for $\tau>0$ small.
Here, one can apply several strategies to simulate the  random variables $\Delta_\tau^nL$, $n\in\NN$,
to generate a so-called \levy walk given by
%
$$
L:[0,\infty) \ni t\mapsto L(t):= \int_0 ^ t \int_{\RR^d} \zeta\; \tilde{\eta}({\rm d}\zeta, {\rm d}s)
$$
\del{For a given time-step size $\tau>0$, it is a sequence of random variables
\DEQSZ\label{levy-walk}
\Bigl( \Delta_\tau  ^ 0L,  \Delta_\tau ^ 1 L , \Delta_\tau ^ 2 L ,\,\ldots,\,  \Delta_\tau  ^k L ,\,\ldots \,\Bigr)\,,
\EEQSZ
where
$$
 \Delta_\tau ^ k L :=
 L(t_{k+1}) - L(t_{k})
\qquad\forall\, k\in\NN\, .
$$
In general, the distribution of $\Delta_\tau ^ k L  $ is not known, such that
$\Delta _\tau ^ k L$ cannot be simulated directly.}
One way is to cut off the jumps being smaller than a given $\ep$ and to simulate the corresponding compound Poisson process directly.
Now, one has two possibilities, to neglect the small jumps or to replace the small jumps by a Gaussian random variable.
Doing so, one gets a new L\'evy process, denoted by $\hat L_\ep$. This method was introduced by Tsuchiya \cite{tsu}. Asmussen and Rosinki \cite{assmusen}
investigated  the process generated only by the small jumps and characterised the .

To be more precise, 
let us cut off all jumps being in the unit ball with radius $\ep$, denoted in the following by $B_\ep$.
I.e., let $B_\ep=\{ z\in \RR^d: |z|\le\ep\}$, where $|\cdot|$ defined a metric on $\RR^d$.
Then, $\Delta_\tau^nL$ is the sum over $N$ random variables $\{Y_1,\ldots,Y_N\}$, where
$N$ is Poisson distributed with parameter $\nu(\RR^d\setminus B_\ep)$ and the random variables  $\{Y_1,\ldots,Y_N\}$
 are identical and independent distributed with
 $$
 Y_i\stackrel{d}{=}\frac {\nu(\cdot\cap B^c_\ep)}{\nu( B^c_\ep)},\quad i=1,\ldots N.
 $$
Now, one can replace the neglected small jumps by increments of a Wiener process. Here, the rate of convergence for the strong error will not improved. However,
calculating the weak error the quality of the approximation will be improved.
The reason is that the Markovian semigroup of the approximation where the small jumps  are  approximated by a Wiener process is analytic.
To explain the implication of this, let us consider the function $\phi:\RR\ni x\mapsto \mathds{1} _{[a,\infty)}(x)$.
Then, for $t>0$ we know 
$
 \PP\lk(  X^{x_0}_t \ge a\rk)=\EE \mathds{1}_{[a,\infty)}(X_t^{x_0})
$, where $X$ solves the stochastic differential equation ($b:\RR^ d \to\RR^ d $ are  Lipschitz continuous, $\sigma>0$)
\DEQSZ\label{eq1sf}
\lk\{\barray
dX^ {x_0}(t) &=& b(X^ {x_0}(t-))\, dt +\sigma dL(t)
\\
X^ x(0)&=&{x_0},\quad {x_0}\in\RR.
\earray\rk.
\EEQSZ
Let us denote the approximation of $X$, where we replaced the small jumps by a Wiener process, by $\hat X$.
Then, the two functions   $\phi_1:\RR\ni x_0\mapsto \EE \mathds{1} _{[a,\infty)}(X_t^{x_0})$ and  $\phi_2:\RR\ni x_0\mapsto \EE \mathds{1} _{[a,\infty)}(X_t^{x_0})$ are differentiable
and one can use the Taylor approximation to get a nice error estimate. In this way, the analyticity of the Markovian semigroup has a strong impact of the quality
of the approximation. After presenting our main result in this section, we illustrate the result by some numerical simulation.

\medskip

Fix a truncation parameter $0< \epsilon <1$. Let us  define the approximate
L\'{e}vy measure
$$\nu ^ {\ep} :\CB(\RR)\ni \mathcal{C} \mapsto \nu\Bigl( \mathcal{C}\cap\setminus B_\ep^c \Bigr) \,  .
$$
Let  $\hat L_{ \ep} $ be the L\' evy process induced by
truncating the small jumps. In particular,  $\hat L_{ \ep} $ ia a  L\' evy process having
intensity $\nu^{\ep}$.
%
Not to neglect the small jumps, we generate at each time-step $k\in\mathbb{N}$ a
Gaussian random variable $\Delta_{\tau}^{k} W_{\epsilon} $, where
\begin{equation}\label{wien1}
\Delta_{\tau}^{k}  W_{\epsilon} \sim \CN \Bigl(  0,
\Sigma^{2}({\ep}) \tau \Bigr)\,, \qquad \mbox{and} \qquad
\Sigma^2(\ep) =\int_{[-\ep,\ep]^ d} \la y,y\ra \,\nu(dy)
\, .
\end{equation}
Then, the increments of the L\'evy process are  approximated
by \DEQS \lk( \hat{\Delta}_{\tau}^{0} L_{\epsilon,1} +
\Delta_{\tau}^{0}  W_{\epsilon} ,  \hat{\Delta}_{\tau}^{1}
L_{\epsilon,1} +\Delta_{\tau}^{1}
W_{\epsilon},\hat{\Delta}_{\tau}^{2}
L_{\epsilon,1}+\Delta_{\tau}^{2}  W_{\epsilon} ,\, \ldots\, ,
\hat{\Delta}_{\tau}^{k} L_{\epsilon,1} +\Delta_{\tau}^{k}
W_{\epsilon},\,\ldots\, \rk). \EEQS

In the following we give an error estimate of the  two processes $X^x$ and $\hat X^x$, where $X^x$ solves \eqref{eq1sf}
and  $\hat X^x$ solves
\DEQSZ\label{eq1app}
\lk\{\barray
d\hat X^ x_\ep (t) &=& b(\hat X^ x_\ep(t-))\, dt +\sigma(\hat X^ x_\ep(t-)) d\hat L_\ep(t)+\sigma(\hat X^ x_\ep(t-)) d W_\ep(t)
\\
X^ x(0)&=&x,\quad x\in\RR^ d.
\earray\rk.
\EEQSZ
Again let us define the corresponding Markovian semigroups.
Let $(\CP_t)_{t\ge0}$ be the Markovian semigroup of the process \eqref{eq1sf}, i.e.
\DEQSZ\label{msg_exact}
\CP_t\phi(x) &:= &\EE \phi(X^ x(t)),\quad t\ge 0.
\EEQSZ
and
let  $(\hat \CP^ \ep _t)_{t\ge0}$ be the Markovian semigroup of the process \eqref{eq1app}, i.e.
\DEQSZ\label{msg_appepw}
\hat \CP^ \ep _t\phi(x)  &:= & \EE \phi(\hat X_\ep ^ x(t)),\quad t\ge 0.
\EEQSZ
The first proposition shows that the semigroup 
 $(\hat \CP^\ep _t)_{t\ge0}$ of the approximation $\hat X^x$ is  analytic on ${B^m _{p,q}(\RR^d)}$.
\begin{defn}
A probability measure on $\RR^d$ is selfdecomposable, iff for any $b>1$, there exists a probability measure $\rho_b$ on $\RR^d$ such that
$$
\hat \mu(z)=\hat \mu(b^{-1}z)\hat \rho_b(z),\quad z\in\RR^d.
$$
\end{defn}
By Theorem 15.10 of \cite[p. 95]{sato}, we know that there exists
\begin{itemize}
\item a finite measure $\lambda$ on the sphere $\mathbb{S}=\{x\in\RR^d: |x|=1\}$
\item  and a function
measurable $k:S\times \RR^+\to \RR_0^+$, decreasing in the second variable,
\end{itemize}
 such that the L\'evy measure $\nu$ of $L$ has the following
representation
$$
\nu(B) = \int_\mathbb{S} \int_0^\infty 1_B(rx)k(r,x)\frac {dr}r \,\lambda(dx), \quad B\in\CB(\RR^d).
$$

\begin{prop}\label{analytic}
Let $L$ be a L\'evy process, such that $L(1)$ has a selfdecomposable distribution such that there exists a
a finite measure $\lambda$ on $\mathbb{S}$ and a function
measurable $k:\mathbb{S}\times \RR^+\to \RR_0^+$, slowly varying at zero and monoton decreasing for $x\to\infty$ in the second variable
such that
$$
\nu(B) = \int_\mathbb{S} \int_0^\infty 1_B(rx)k(r,x)\frac {dr}{r^{1+\alpha}} \,\lambda(dx), \quad B\in\CB(\RR^d).
$$
Let us assume that there exists some $c_0>0$ such that $\lambda( \{|\la x,e_j\ra |> 1/\sqrt{2}\})\ge c_0$ for $j=1,\ldots,d$, where $e_j=(e_j^1,e_j^2,\ldots ,e_j^d)$ with $e_j^k=0$ iff $j\not=k$ and $e_j^j=1$.
Let us assume that $\sigma\in \cal^{d+3}_b(\RR^d)$ and $ b\in \cal^{d+3}_b(\RR^d)$ and that  $\sigma$ is bounded away from zero.
\begin{itemize}
\item Then,  for all $1\le p,q<\infty$, the Markovian semigroup $(\hat \CP^\ep_t)_{t\ge0}$ is an analytic semigroup in $B^m _{p,q}(\RR^d)$ for all $m\in\RR$. 
\item
Let $q(D)$
 be a pseudo--differential operator with symbol $q(\xi)$, where $q\in S^\kappa_{1,0}(\RR^ d\times \RR^ d)$ with $\kappa\le 2(\delta-1)$.
 Then, we have for $u\in{B^m _{p,q}(\RR^d)}$
 \DEQSZ\label{est1brauche}
 \big|  \hat\CP^\ep_t q(D) u \big|_{B^m _{p,q}}\le \frac Ct |u|_{B^m _{p,q}}.
 \EEQSZ
\end{itemize}
\end{prop}

\begin{thm}\label{error}
Let us assume that $\sigma\in \cal^{d+3}_b(\RR^d)$ and $ b\in \cal^{d+3}_b(\RR^d)$. Let  $\sigma$ be bounded away from zero.
Then,  for $\delta\in(1,2)$ (Blumenthal--Getoor index of L), $r_1,r_2\in(0,1)$ such that $r_1+r_2>1$ and $2r_1>r_2$ with $\delta_1=\frac{\delta r_2}{2}$ and $\delta_2=\delta(r_1-\frac{ r_2}{2})$,
 \del{ then for any $\gamma\in [0,2(\alpha-1))$ there exists a constant $C>0$ such that for any $r\in\RR$}
\DEQS
\big\| \CP_t-\hat \CP^ \ep_t\big\|_{L(B_{\infty,\infty}^{ -\delta_2},B_{\infty,\infty}^ {\delta_1})}\le C  t^{2(\delta-1)( r_1+ r_2)-1}\ep ^ {(2-\delta)} .
\EEQS
\end{thm}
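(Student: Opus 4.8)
The plan is to write the semigroup difference by Duhamel's formula and then balance the $\ep^{2-\delta}$ smallness of the \emph{truncated} part of the symbol against the analytic smoothing of the two semigroups, distributing the regularity gained among the factors so that the $s$-integral produces exactly $t^{\delta(r_1+r_2)-1}$. Both $(\CP_t)_{t\ge0}$ and $(\hat\CP^\ep_t)_{t\ge0}$ are analytic semigroups on every $B^m_{p,q}(\RR^d)$ (Theorem \ref{main2} and the preceding Proposition), with generators $a(x,D)$, resp.\ $\hat a_\ep(x,D)$, and $\CSS(\RR^d)$ is contained in both domains; so for $u\in\CSS(\RR^d)$,
\[
\CP_t u-\hat\CP^\ep_t u=\int_0^t\frac{d}{ds}\big(\hat\CP^\ep_{t-s}\CP_s u\big)\,ds=\int_0^t\hat\CP^\ep_{t-s}\,\big(a(x,D)-\hat a_\ep(x,D)\big)\,\CP_s u\,ds .
\]
The passage from this identity to operator bounds on $B^{-\delta_2}_{\infty,\infty}(\RR^d)$ is routed through $B^m_{p,q}$ with finite $p,q$ together with Besov embeddings as in Corollaries \ref{sftheo1} and \ref{smooth1}, or carried out directly via Littlewood--Paley as in the analysis behind Corollary \ref{sfcor2}.

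\textbf{Step 1 (the generator difference and its symbol).} By \eqref{schill} and the L\'evy--Khintchine representation, $a(x,D)-\hat a_\ep(x,D)=-c_\ep(x,D)$ with
\[
c_\ep(x,\xi)=\int_{\{|z|\le\ep\}}\Big(e^{i\la\sigma^T(x)\xi,z\ra}-1-i\la\sigma^T(x)\xi,z\ra+\tfrac12\la\sigma^T(x)\xi,z\ra^2\Big)\,\nu(dz),
\]
i.e.\ the small-jump part of the symbol corrected by the matched second moment of the added Gaussian. From the elementary bound $|e^{iu}-1-iu+\tfrac12 u^2|\lesssim\min(|u|^3,|u|^2)$, the estimate $|\sigma^T(x)\xi|\lesssim|\xi|$, $\sigma,b\in\cal^{d+3}_b(\RR^d)$, and the moment bounds $\int_{\{|z|\le\ep\}}|z|^{2+k}\,\nu(dz)\lesssim\ep^{2+k-\delta}$ for $k\ge0$ (which follow from the generalized Blumenthal--Getoor index being $\delta<2$, with $\delta$ replaced by any slightly larger number if necessary, a change that only affects constants), one checks that $\ep^{-(2-\delta)}c_\ep$ lies in a bounded subset of $\CA^{2}_{2d+4,d+3;1,0}(\RR^d\times\RR^d)$ uniformly in $\ep\in(0,1)$; interpolating the two Taylor bounds also gives $c_\ep\in\ep^{2-\delta+\theta}\,\CA^{2+\theta}_{2d+4,d+3;1,0}(\RR^d\times\RR^d)$ for $\theta\in[0,1]$. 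The pseudodifferential boundedness results of Appendix \ref{pseudo-app} then yield, uniformly in $\ep$, $\|c_\ep(x,D)\|_{L(B^s_{\infty,\infty},B^{s-2}_{\infty,\infty})}\lesssim\ep^{2-\delta}$ for all $s\in\RR$ (and, using the interpolated class, a loss of $2+\theta$ derivatives with gain $\ep^{2-\delta+\theta}$).

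\textbf{Step 2 (uniform smoothing and the $s$-integral).} Since $\Re\psi\gtrsim\lgxi^{\delta}$ and deleting the jumps with $|z|\le\ep$ only enlarges the real part, $\Re\hat a_\ep(x,\xi)\ge\Re a(x,\xi)\gtrsim\lgxi^{\delta}$ and $\hat a_\ep\in\Hyp_{d+1,0;1,0}^{\delta,\delta}(\RR^d\times\RR^d)$ with seminorms bounded independently of $\ep$; hence, exactly as in the proof of Theorem \ref{sftheo111}, $\|\CP_s\|_{L(B^{\gamma-a}_{\infty,\infty},B^{\gamma}_{\infty,\infty})}\lesssim s^{-a/\delta}$ and $\|\hat\CP^\ep_s\|_{L(B^{\gamma-b}_{\infty,\infty},B^{\gamma}_{\infty,\infty})}\lesssim s^{-b/\delta}$ for $0\le a,b<\delta$, with constants independent of $\ep$, while the small-coefficient Gaussian component of $\hat a_\ep$ allows a further gain of up to two derivatives at the cost of a negative power of $\ep$. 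Distributing the required total regularity gain $\delta_1+\delta_2+2=\delta r_1+2$ between $\CP_s$ and $\hat\CP^\ep_{t-s}$ — and, for the part of the gain above $\delta$, between the $\delta$-order and the order-$2$ smoothing of $\hat\CP^\ep$ — and inserting Step 1 into the Duhamel integral leads to
\[
\big\|\CP_t-\hat\CP^\ep_t\big\|_{L(B^{-\delta_2}_{\infty,\infty},B^{\delta_1}_{\infty,\infty})}\lesssim\ep^{2-\delta}\int_0^t s^{-\mu_1}(t-s)^{-\mu_2}\,ds\lesssim\ep^{2-\delta}\,t^{\,1-\mu_1-\mu_2},
\]
with $\mu_1,\mu_2<1$; the hypotheses $0<r_1,r_2<1$, $r_1+r_2>1$ and $2r_1>r_2$ are precisely what make $\delta_1,\delta_2>0$, guarantee that an admissible split with $\mu_1,\mu_2<1$ exists (so the Beta-type integral converges), and force $1-\mu_1-\mu_2=\delta(r_1+r_2)-1$.

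The hard part is Step 1 together with the $\ep$-uniformity in Step 2: one must control every mixed derivative $\partial_\xi^\alpha\partial_x^\beta c_\ep$ with $|\alpha|\le2d+4$, $|\beta|\le d+3$ — the orders demanded by the boundedness theorems of the appendix — by $\ep^{2-\delta}\lgxi^{(2-|\alpha|)_+}$ with constants that stay bounded as $\ep\downarrow0$, and simultaneously ensure that the negative power of $\ep$ produced by the order-$2$ smoothing of $\hat\CP^\ep$ is always beaten by the $\ep^{2-\delta}$ gained from the truncation; this trade-off is exactly what couples $\delta$ to the admissible range of $(r_1,r_2)$ through the two side conditions on $r_1,r_2$.
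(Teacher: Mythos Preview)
Your approach is essentially the paper's: Duhamel's formula for $\CP_t-\hat\CP^\ep_t$, a symbol estimate showing $a-\hat a_\ep\in\ep^{2-\delta}\CA^2$, analytic smoothing of both semigroups, and a Beta-type time integral. The paper is more explicit where you are abstract: it computes $\big|\partial_\xi^\gamma[a(x,\xi)-\hat a_\ep(x,\xi)]\big|$ case by case for $|\gamma|=0,1$ and $|\gamma|\ge 2$ using the stable-like scaling $\int_{|z|\le\ep}|z|^k\,\nu(dz)\lesssim\ep^{k-\delta}$, and it names the concrete chain of intermediate spaces $B^{-\delta_2}_{\infty,\infty}\xrightarrow{\CP_s}B^{\delta r_2/2}_{\infty,\infty}\xrightarrow{a-\hat a_\ep}B^{-\delta r_2/2}_{\infty,\infty}\xrightarrow{\hat\CP^\ep_{t-s}}B^{\delta r_2/2}_{\infty,\infty}$ together with the exponents $(t-s)^{-\delta r_2}s^{-\delta r_1}$, rather than leaving the split of $\mu_1,\mu_2$ implicit; your Step~2 asserts but does not exhibit an admissible split with $1-\mu_1-\mu_2=\delta(r_1+r_2)-1$.

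One concrete error to fix: the sentence ``deleting the jumps with $|z|\le\ep$ only enlarges the real part, $\Re\hat a_\ep(x,\xi)\ge\Re a(x,\xi)$'' is backwards. Removing jumps \emph{decreases} $\Re\psi$, since $\Re[\psi-\psi_\ep](\xi)=\int_{|z|\le\ep}(1-\cos\langle\sigma^T\xi,z\rangle)\,\nu(dz)\ge0$. The $\ep$-uniform hypoellipticity you need still holds, but for a different reason: for $|\xi|\lesssim\ep^{-1}$ the removed piece is $\lesssim(\ep|\xi|)^{2-\delta}|\xi|^\delta$, a small fraction of $\Re\psi\sim|\xi|^\delta$, while for $|\xi|\gtrsim\ep^{-1}$ the added Gaussian term $\Sigma(\ep)|\sigma^T(x)\xi|^2\sim\ep^{2-\delta}|\xi|^2$ dominates and restores $\Re\hat a_\ep\gtrsim|\xi|^\delta$. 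With that correction your $\ep$-uniform smoothing claim for $\hat\CP^\ep$ goes through, and the rest of your outline matches the paper.
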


To illustrate Theorem \ref{error} we would like to present the following simulations.
Here, we took as underlying process
\DEQSZ\label{sdestable}
dX_t=-aX^x(t^-)\, dt+dL(t),\quad X^x(0)=x,
\EEQSZ
where $(L_t)_{t\ge 0}$ is a strictly $\alpha$--stable process, $\alpha>1$.
We approximated this process once by cutting off the small jumps and replacing the small jumps by an independent Wiener process
described by \eqref{eq1app}, and secondly, by only cutting off the small jumps, i.e. by
\DEQSZ\label{eq1appnw}
\lk\{\barray
d\bar  X^ x_\ep (t) &=&-a\bar X^ x(t-) dt + d\hat L_\ep(t)
\\
X^ x(0)&=&x_0,\quad x\in\RR^ d.
\earray\rk.
\EEQSZ
Let $(\CP_t)_{t\ge0}$ be the Markovian semigroup of the process \eqref{eq1sf},
let  $(\hat \CP^ \ep _t)_{t\ge0}$ be the Markovian semigroup of the process \eqref{eq1app}, and, finally,
\DEQSZ\label{msg_appepw}
\bar \CP^ \ep _t\phi(x)  &:= & \EE \phi(\bar X_\ep ^ x(t)),\quad t\ge 0.
\EEQSZ
We simulated the Markovian semigroup  for two different functions $\phi_1$ and $\phi_2$
$$
\phi_1:= \mathds{1}_{x\ge 0.5}\quad \mbox{and}\quad  \phi_2(x)=\frac {x^2}{1+x^2}.
$$
In the first pictures we simulated  $\hat  \CP^ \ep _t\phi_i(x_0) $, $\bar \CP^ \ep _t\phi_i(x_0) $ and the difference
 $|\CP_t\phi_i(x_0)- \hat  \CP^ \ep _t\phi_i(x_0) |$ and  $|\CP_t\phi_i(x_0)- \bar \CP^ \ep _t\phi_i(x_0) |$ for $x_0=0$, $x_0=0.45$, and $i=1,2$, $\alpha=1.25, 1.90$.

\medskip
\del{
  \includegraphics[width=0.5\textwidth]{MS_phi_1hatx175.eps}
   \includegraphics[width=0.5\textwidth]{MS_phi_1barx175.eps}

\medskip

   \includegraphics[width=0.5\textwidth]{MS_phi_2hatx175.eps}
   \includegraphics[width=0.5\textwidth]{MS_phi_2barx175.eps}
}
It is observable that,  if $\alpha$ is closed to two, replacing the small jumps by a Wiener process improves the quality of the approximation.
Especially, if the underlying function is not differentiable. In our example, this can be easily seen by taking $x_0$ closed to $0.5$ and $\phi=\phi_2$.
If $\alpha$ is closed to one and $\phi$ is differentiable, there is nearly no improvement. This we could also verify by the following picture, where
we simulated the error between $\CP_1(\phi_i)$ and $\hat \CP_1^\varepsilon(\phi_i)$, respectively, between $\CP_1(\phi_i)$ and $\bar \CP_1^\varepsilon(\phi_i)$
for $i=1,2$ with $x_0=0.45$. Since our starting point is $0$ and $\phi_2$ is not continuously differentiable in zero, we verified quality of the convergence for $\alpha$ closed to $2$ and
starting point closed to $0.5$ for $\phi_2$. Here, we see that the improvement is not as good as for $\phi_1$.

Before presenting the proof, we want to give some remarks.
Note, that $(\CP_t)_{t\ge 0}$ has generator given by the symbol $a(x,\xi)=\psi(\sigma(x)^ T\xi)$ and
$(\hat \CP_t)_{t\ge 0}$ has generator given by the symbol $\hat a_\ep(x,\xi)=\psi_\ep (\sigma(x)^ T\xi)-\Sigma^2(\ep)\la \xi,\sigma^T(x)\xi\ra$,
where
$$
\psi_\ep(\xi) =\int_{\RR^ d\setminus B_\ep } (e^ {i\la y,\xi\ra }-1)\nu(dy),\quad \xi\in\RR^d,
$$
and
$$
\Sigma^2(\ep) =\int_{B_\ep } \la y,y\ra  \, \nu(dy).
$$
In addition, let us note that the L\'evy measure $\nu$ can be written as (see \cite{sato})
$$
\nu(B)=\int_{\mathbb{S}}\lambda(d\eta)\int_0^\infty 1_B(r\xi)k_\xi(r)\frac {dr}r,\quad B\in\CB(\RR^d),
$$
where $\lambda$ is a probability measure on the sphere  $\mathbb{S}$ and $k_\xi(r)$ is nonnegative, measurable in $\xi$ and decreasing in $r$.
\medskip

\begin{proof}[Proof of Proposition \ref{analytic}]
Let us assume that the support of $\nu$ belongs to $\{x\in\RR^d\mid |x|\le 1\}$.
The symbol for the approximation is given by
$$
\psi_\ep(\xi)+\Sigma^2(\ep)\la\xi,\xi\ra,
$$
where
$$
\psi_\ep(\xi):=\int_\mathbb{S} \int_\ep^1 (1-e^{ir\la x,\xi\ra}-i\la x,\xi\ra )\,k(r,x)\frac {dr}{r^{1+\alpha}} \,\lambda(dx)
$$
and
$$
\Sigma^2(\ep)=\int_\mathbb{S} \la x,x\ra\, \int_0^\ep k(r,x)\frac {dr}{r^{1+\alpha}} \,\lambda(dx).
$$
First we will show that there exists some $R>0$ and $c>0$ such that
$$
|\psi_\ep(\xi)+\Sigma^2(\ep)\la\xi,\xi\ra|\ge |\xi|^{\alpha},\quad \xi\in R\CU_1.
$$
Let $R>0$ such that $|\psi(\xi)\ge |\xi|^{\alpha}$ with $\xi\in R\CU_1$.
Substitution gives
\DEQS
\lqq{ \psi_\ep(\xi):=\int_\mathbb{S} \int_\ep^1(1-e^{ir\la x,\xi\ra}-i\la x,\xi\ra)\,k(r,x)\frac {dr}{r^{1+\alpha}} \,\lambda(dx)
}&&
\\
&\ge &
\int_\mathbb{S} \int_\ep^1(1-\cos( r\la x,\xi\ra ))\,k(r,x)\frac {dr}{r^{1+\alpha}} \,\lambda(dx)
\\
&\ge & c \int_\mathbb{S}(\la x,\xi\ra)^\alpha \int_{\ep\la x,\xi\ra }^{\la x,\xi\ra}(1-\cos( r))\frac {dr}{r^{1+\alpha}} \,\lambda(dx)
\EEQS
For  $|\xi|\in(2\pi,\ep^{-1})$ we get   and
\DEQS
 |\psi_\ep(\xi)|\ge  c\, c_0 |\xi|^\alpha \int_{\ep|\xi| }^{|\xi|}(1-\cos( r))\frac {dr}{r^{1+\alpha}}
\ge  \tilde c |\xi|^\alpha  \frac {2\pi-1+\sin(1)}{(2\pi)^{-1-\alpha}}.
\EEQS
For $|\xi|>\ep^{-1}$ we have
$$
\la \Sigma^2(\ep)\xi,\xi\ra\ge c_0\frac {\ep^{2-\alpha}}{2-\alpha}|\xi|^2\ge  c_0\frac {\xi^{\alpha}}{2-\alpha}|\xi|^2.
$$
Hence,
\DEQS
\frac 1{\psi_\ep(\xi)+\Sigma^2(\ep)\la\xi,\xi\ra}\le C |\xi|^{-\alpha}.
\EEQS
It remains to investigate for a multiindex
$$
\partial^{\alpha}_\xi\Big[ \frac 1{\psi_\ep(\xi)+\Sigma^2(\ep)\la\xi,\xi\ra}\Big]
$$
with  derivatives up to order $2d+4$.
 Then, we have for $|\alpha|=1$ and $\xi\in\delta \CU_1$ 
\DEQS
 |\frac{\partial }{\partial \xi_j} \psi_\ep(\xi)|\le | \frac{\partial }{\partial \xi_j} \psi(\xi)|\le \gr |\xi|\gl^{\alpha-1}.
\EEQS
On the other side
$$
|\frac{\partial }{\partial \xi_j}\Sigma(\ep)\la\xi,\xi\ra |\le \frac{\ep^{2-\alpha}}{2-\alpha} |\xi| ,
$$
In this way we get
\DEQS
 \partial^{\alpha}_\xi\Big[ \frac 1{\psi_\ep(\xi)+\Sigma^2(\ep)\la\xi,\xi\ra} \Big]\le \gr\xi\gl^{-2\alpha+\alpha-1}\wedge \big(\frac{\ep^{2-\alpha}}{2-\alpha}  \gr\xi\gl^{-2\alpha+1}\Big).
\EEQS
Iterating gives that 
$\psi_\ep(\xi)+\Sigma^2(\ep)\la\xi,\xi\ra$ belongs to $\mbox{Hyp}^{2(\alpha-1)}_{2d+4,d+3;1,0}(\RR^d\times \RR^d)$. Therefore we can conclude that the Markovian semigroup is analytic.

 \end{proof}

\medskip

\begin{proof}[Proof of Theorem \ref{error}]
Firstly, we have to show that
\DEQSZ\label{firstres}
\lk| a(x,\xi)-a_\ep(x,\xi)\rk|\le \lk|\xi\rk|^ 2 \ep ^ {(2-\delta)}
\EEQSZ
In particular, we have to show for any  multiindex $\gamma$ with $|\gamma|\le d+1$,
we have
$
\lk|\frac {\partial ^ \gamma}{\partial _\xi} \lk[ a(x,\xi)-a_\ep(x,\xi)\rk] \rk|  \le C\, \ep ^ {|\gamma|-\delta}.
$
 That means $a(x,\xi)-a_\ep(x,\xi)\in S^{2}_{1,0}(\RR^ d\times \RR^ d)$. Through out this proof $C$, is denoted as a varying constant.
 Let us start with $\gamma=0$.

\DEQS
\lqq{ \lk| a(x,\xi)-a_\ep(x,\xi) \rk| = \lk|\psi(\sigma(x)^ T\xi)-\psi_\ep (\sigma(x)^ T\xi)+\Sigma(\ep)( \xi,\sigma^T(x)\xi) \rk|}
&&
\\
&=& \biggl| \int_ {\RR^ d\setminus \{0\} } \lk[e^{i(y,\sigma(x)^ T\xi)}-1-i( y,\sigma(x)^ T\xi) \rk]\nu(dy)-\int_ {\RR^ d\setminus B_\ep} \lk[e^{i(y,\sigma(x)^ T\xi)}-1\rk]\nu(dy)
\\
&+&
 \int_{B_\ep } (\xi,\sigma(x)^ T\xi)( y,y) \nu(dy) \biggr|
\\
& =& \biggl| \int_{B_\ep }\lk[e^{i(y,\sigma(x)^ T\xi)}-1-i(y,\sigma(x)^ T\xi)+(\xi,\sigma(x)^ T\xi)( y,y)\rk] \nu(dy) \biggr|
\\
 &\le &  \int_{B_\ep }\biggl|e^{i(y,\sigma(x)^ T\xi)}-1-i( y,\sigma(x)^ T\xi)\biggr| \nu(dy)+ \lk|(\xi,\sigma(x)^ T\xi)\rk|\int_{B_\ep }\lk|y\rk|^2 \nu(dy)
\\
&\le & \int_{B_\ep }\lk|(y,\sigma(x)^ T\xi)\rk|^2 \nu(dy)+ C_1(d)\lk|(\xi,\sigma(x)^ T\xi)\rk|\int_{B_\ep }\lk|y\rk|^2 \nu(dy)
\\
&\le& \lk[\lk|\xi\rk|^2\lk|\sigma(x)^ T\rk|^2+ \lk|\xi\rk|^2\lk|\sigma(x)^ T\rk|\rk]\int_{B_\ep }\lk|y\rk|^2 \nu(dy)\\
&\leq
& C \lk|\xi\rk|^2\int_{B_\ep }\lk|y\rk|^2\nu(dy)\le C \lk|\xi\rk|^2\int_{\mathbb{S}}\lk|\eta\rk|^2\lambda(d\eta)\int_0^\ep r^2\frac{dr}{r^{1+\delta}}\le C \lk|\xi\rk|^2\ep^{2-\delta},
\EEQS
\del{\red{why does we have
$$\int_{[-\ep,\ep]^ d} \lk| y\rk| ^2  \nu(dy)= \lk( \int_{-\ep}^ {\ep} \lk| y\rk| ^{1-\delta} dy\rk)^d
$$
In my opinion this is wrong. Also for the case of independent Levy processes the intensity measure consits of a sum. Please, take a decomposable
multidimensional \levy measure as example  }
}
where 
$\lk|y\rk|= \sqrt{y_1^2+\cdots +y_d^ 2 }$.  The second inequality from the top of the above estimate is due to the first estimate in the proof of the Lemma 15.1.7 in \cite{Davar}. Since $\nu$ is a $\delta$-stable \levy measure, we can apply result 14.7 in p.79 \cite{sato} to get the last estimate.

\medskip

Now, let us consider $\gamma=1$. We have for $1\le j\le d$
 \DEQS
\lqq{ \lk|\frac{d}{d\xi_j}\lk[ a(x,\xi)-a_\ep(x,\xi) \rk]\rk| = \lk|\frac{d}{d\xi_j}\lk[\psi(\sigma(x)^ T\xi)-\psi_\ep (\sigma(x)^ T\xi)+\Sigma(\ep)( \xi,\sigma^T(x)\xi) \rk]\rk|}
&&
\\
& =& \biggl| \int_{B_\ep }\frac{d}{d\xi_j}\lk[e^{i(y,\sigma(x)^ T\xi)}-1-i(y,\sigma(x)^ T\xi)+(\xi,\sigma(x)^ T\xi)( y,y)\rk] \nu(dy) \biggr|
\\
& =& \biggl| \int_{B_\ep }\sum_{k=1}^d\lk[iy_k\sigma_{jk}(x)\lk[e^{i(y,\sigma(x)^ T\xi)}-1\rk]+2\lk[\xi_k\sigma_{jk}(x)\rk]( y,y)\rk] \nu(dy) \biggr|
\\
&\le  &  \int_{B_\ep }\sum_{k=1}^d\Big[\lk|y_k\sigma_{jk}(x)\rk|\lk|e^{i(y,\sigma(x)^ T\xi)}-1-i(y,\sigma(x)^ T\xi)\rk|
\\
&&\vspace{2cm}+\lk|y_k\sigma_{jk}(x)\rk|\lk|(y,\sigma(x)^ T\xi)\rk|+2\lk[\xi_k\sigma_{jk}(x)\Big]( y,y)\rk] \nu(dy)
\\
&\le  &  \int_{B_\ep }\sum_{k=1}^d\lk[\lk|y_k\sigma_{jk}(x)\rk|\lk|(y,\sigma(x)^ T\xi)\rk|^ 2+\lk|y_k\sigma_{jk}(x)\rk|\lk|(y,\sigma(x)^ T\xi)\rk|+2\lk[\xi_k\sigma_{jk}(x)\rk]( y,y)\rk] \nu(dy)
\\
&\le  &  \int_{B_\ep }\sum_{k=1}^d\lk|y_k\sigma_{jk}(x)\rk|\lk[\lk|(y,\sigma(x)^ T\xi)\rk|+\lk|(y,\sigma(x)^ T\xi)\rk|^ 2 \rk]\nu(dy)+ \lk|\xi\rk|\lk|\sigma(x)^ T\rk|\int_{B_\ep }\lk|y\rk|^2 \nu(dy)
\\
&\le& C\sum_{n=2 }^3\int_{B_\ep } \lk|\xi\rk|^n\lk|y\rk|^n\nu(dy)\le C\sum_{n=2 }^3 \lk|\xi\rk|^n\int_{\mathbb{S}}\lk|\eta\rk|^n\lambda(d\eta)\int_0^\ep r^n\frac{dr}{r^{1+\delta}}\le C \sum_{n=2 }^3\lk|\xi\rk|^n\ep^{n-\delta}.
\EEQS
To get the last estimate we applied the similar steps  as in previous estimate.
Now we estimate
$$\lk|\frac{\partial^\gamma}{\partial^{\alpha_1}\xi_1\ldots\partial^{\alpha_d}\xi_d}\lk[ a(x,\xi)-a_\ep(x,\xi) \rk]\rk|
$$ with $2\le|\gamma|\le d+1 $.
Here, we get the following sequence of calculations
\DEQS
\lqq{ \lk|\frac{\partial^\gamma}{\partial^{\alpha_1}\xi_1\ldots\partial^{\alpha_d}\xi_d}\lk[ a(x,\xi)-a_\ep(x,\xi) \rk]\rk|}
&&
\\
& =& \lk|\frac{\partial^\gamma}{\partial^{\alpha_1}\xi_1\ldots\partial^{\alpha_d}\xi_d}\lk[\psi(\sigma(x)^ T\xi)-\psi_\ep (\sigma(x)^ T\xi)+\Sigma(\ep)( \xi,\sigma^T(x)\xi) \rk]\rk|
\\
 &=&\biggl| \int_{B_\ep }\frac{\partial^\gamma}{\partial^{\alpha_1}\xi_1\ldots\partial^{\alpha_d}\xi_d}\lk[e^{i(y,\sigma(x)^ T\xi)}-1-i(y,\sigma(x)^ T\xi)+(\xi,\sigma(x)^ T\xi)( y,y)\rk] \nu(dy) \biggr|
\\
&\le &\lk| \int_{B_\ep }\sum_{k=1}^d\Pi_{j=1}^d \lk(y_k\sigma_{jk}(x)\rk)e^{i(y,\sigma(x)^ T\xi)} \nu(dy) \rk|+\lk|C(x)\rk|\int_{B_\ep }\lk| ( y,y)\rk|\nu(dy)
\\
&\le & C\int_{B_\ep }\lk|\sum_{j=1}^d y_j\rk|^{\lk|\gamma\rk| }\nu(dy) +\lk|C(x)\rk|\int_{B_\ep }\lk| ( y,y)\rk|\nu(dy)
+ C\int_{B_\ep }\lk| y\rk|^{\lk|\gamma\rk| }\nu(dy) +\lk|C(x)\rk|\int_{B_\ep }\lk| y\rk|^ 2\nu(dy)
\\
&\le& C \int_{B_\ep }\lk|y\rk|^{\lk|\gamma\rk| }\nu(dy)\le C \int_{\mathbb{S}}\lk|\eta\rk|^{\lk|\gamma\rk| }\lambda(d\eta)\int_0^\ep r^{\lk|\gamma\rk| }\frac{dr}{r^{1+\delta}}\le C \ep^{^{\lk|\gamma\rk| }-\delta},
\EEQS
where $C(x)=f\lk(\sigma_{jk}(x)\rk)$ if $\lk|\gamma\rk| =2$ and $C(x)=0$ if $\lk|\gamma\rk| \geq 3$ .
   Observe, due to \cite[ Proposition 3.1.2, in p.77 ]{Pazy:83}   we can write
\DEQS
\lk[ \CP_t  -  \hat \CP_t^ \ep\rk] u  = \int_0^ t \hat \CP^ \ep_{t-s} \lk[ a(x,D)-a_\ep(x,D)\rk] \CP_s u\, ds.
\EEQS
Now let $r_1,r_2\in(0,1)$ such that $r_1+r_2>1$ and $2r_1>r_2$. Then for $u\in B^{-\delta(r_1-\frac{ r_2}{2})}_{\infty,\infty}(\RR^d)$  we get
\DEQS
\lqq{ \lk| \lk[ \CP_t  -  \hat \CP_t^ \ep\rk] u \rk|_{ B^{\frac{\delta r_2}{2}}_{\infty,\infty}} }
&&
\\
&\le& \int_0^ t \lk|  \hat \CP^ \ep_{t-s}\lk[ a(x,D)-a_\ep(x,D)\rk] \CP_s u\rk| _{ B^{\frac{\delta r_2}{2}}_{\infty,\infty}}\, ds
\\
&\le& \int_0^ t \lk\|  \hat\CP^ \ep_{t-s}\rk\|_{L\big( { B^{-\frac{\delta r_2}{2}}_{\infty,\infty}},{ B^{\frac{\delta r_2}{2}}_{\infty,\infty}} \big)}
\lk\| a(x,D)-a_\ep(x,D)\rk\|_{L\big( { B^{\frac{\delta r_2}{2}}_{\infty,\infty}},{ B^{-\frac{\delta r_2}{2}}_{\infty,\infty}} \big)}
\\
&\times & \lk\| \CP_s \rk\|_{L\big( { B^{-\delta(r_1-\frac{ r_2}{2})}_{\infty,\infty}},{ B^{\frac{\delta r_2}{2}}_{\infty,\infty}} \big)} \lk|u\rk| _{B^ {-\delta(r_1-\frac{ r_2}{2})}_{\infty,\infty}}\, ds
\EEQS
\DEQS
\lqq{\le\lk\|a(x,D)-a_\ep(x,D)\rk\|_{L\big( { B^{\frac{\delta r_2}{2}}_{\infty,\infty}},{ B^{-\frac{\delta r_2}{2}}_{\infty,\infty}} \big)}\lk|u\rk| _{B^ {-\delta(r_1-\frac{ r_2}{2})}_{\infty,\infty}} \int_0^ t   (t-s)^ { -\delta r_2}
 s^ {-\delta r_1}
\, ds}
&&
\\
&\le& t^{\delta( r_1+ r_2)-1} \lk\| a(x,D)-a_\ep(x,D)\rk\|_{L\big( { B^{\frac{\delta r_2}{2}}_{\infty,\infty}},{ B^{-\frac{\delta r_2}{2}}_{\infty,\infty}} \big)}\lk|u\rk| _{B^ {-\delta(r_1-\frac{ r_2}{2})}_{\infty,\infty}} \int_0^ 1   (1-s)^ { -\delta r_2}
 s^ {-\delta r_1}
\, ds
\\
&\le &t^{\delta( r_1+ r_2)-1}\lk\| a(x,D)-a_\ep(x,D)\rk\|_{L\big( { B^{\frac{\delta r_2}{2}}_{\infty,\infty}},{ B^{-\frac{\delta r_2}{2}}_{\infty,\infty}} \big)}\lk|u\rk| _{B^ {-\delta(r_1-\frac{ r_2}{2})}_{\infty,\infty}} \, B(1-\delta r_1,1-\delta r_2)
\\
&\le & Ct^{\delta( r_1+ r_2)-1}\lk|u\rk| _{B^ {-\delta(r_1-\frac{ r_2}{2})}_{\infty,\infty}} .
\EEQS
For the last inequality we used the fact that  we have already proven \eqref{firstres}. Therefore,  it follows from   Theorem 6.19 in \cite{pseudo2} that for any  $m\in\RR$
$$( a(x,D)-a_\ep(x,D)):B_{\infty,\infty}^{2 +m}(\RR^d)\to B_{\infty,\infty}^{m}(\RR^d)
$$
is a bounded operator. Therefore we have
$$
\lk\| a(x,D)-a_\ep(x,D)\rk\|_{L\big( { B^{\frac{\delta r_2}{2}}_{\infty,\infty}},{ B^{-\frac{\delta r_2}{2}}_{\infty,\infty}} \big)}\le C \ep ^ {(2-\delta)}.
$$
Let $\delta_1=\frac{\delta r_2}{2}$ and $\delta_2=\delta(r_1-\frac{ r_2}{2})$. Rewriting above gives
\DEQS
 \lk|  \lk[ \CP_t  -  \hat \CP_t^ \ep\rk] u \rk|_{\cal^ 0_b} \le  \lk|  \lk[ \CP_t  -  \hat \CP_t^ \ep\rk] u \rk|_ {B^ {\delta_1}_{\infty,\infty} }
\le t^{\delta( r_1+ r_2)-1}\ep ^ {(2-\delta)} |u|_{ B^ {-\delta_2}_{\infty,\infty} }\le  t^{\delta( r_1+ r_2)-1} \ep^  {(2-\delta)} |u|_{\cal^ 0_b}.
\EEQS
\del{
Due to the Taylor expansion one gets
\DEQS
\lk| a(x,\xi)-a_\ep(x,\xi)\rk|\le \xi^ 2 \ep ^ {2-\alpha}, \quad \xi >0.
\EEQS
Hence, since $\alpha>1$ we can find some $r_1,r_2\in(0,1)$ such that $r_1+r_2>1$, $\alpha (r_1+r_2-\delta)\ge 2$. Therefore,
\DEQS
\lk| a(x,D)-a_\ep(x,D)\rk|_{L( B^{\alpha r_2}_{\infty,\infty}, B^ {-\alpha r_1}_{\infty,\infty}) } \le C\, \ep^ {2-\alpha}.
\EEQS
Therefore,
\DEQS
 \lk|  \lk[ \CP_t u -  \hat \CP_t^ \ep\rk] u \rk|_{\cal^ 0_b} \le  \lk|  \lk[ \CP_t u -  \hat \CP_t^ \ep\rk] u \rk|_ {B^ {\delta/2}_{\infty,\infty} }
\\
\le \ep^ {2-\alpha} |u|_{ B^ {-\delta/2}_{\infty,\infty} }\le  \ep^ {2-\alpha} |u|_{\cal^ 0_b}.
\EEQS
}
\end{proof}

\section{Invertibility of  pseudo--differential operators}
\label{nsec}
In this section, we study under which conditions the pseudo-differential operator is invertible.
To investigate the inverse of a pseudo-differential operator one has to introduce the set of elliptic and hypoelliptic symbols. For the reader's convenience, we define the elliptic and hypoelliptic symbols in this section.


\medskip

\del{\begin{rem}\label{hypadjoint}
		Let $a(x,\xi)\in \Hyp_{\rho,\delta}^{m,m_0}(X\times \RR^ d)$ be a symbol and $a^\ast(x,\xi)$ the symbol of the formal adjoint operator $a^\ast(x,D)$. Then, one can see from the expansion in
		\eqref{expadj}, that if $a(x,\xi)\in\Hyp_{\rho,\delta}^{m,m_0}(X\times \RR^ d) $ then $a^\ast(x,\xi)\in \Hyp_{\rho,\delta}^{m,m_0}(X\times \RR^ d) $.
	\end{rem}
}
We are now interested, under which condition an operator  $a(x,D)$ is invertible.
To be more precise, %
we aim to answer  the following questions. Given $f\in B_{p,r}^s(\RR^d)$, does there exists an element  $u\in\CSS'(\RR^d)$ such that
\DEQSZ\label{probelm01all}
a(x,D) u(x) = f(x),\quad x\in\RR^d,
\EEQSZ
and to which Besov space belongs 
$u$?
\medspace

The invertibility is used for giving bounds of the resolvent of an operator $a(x,D)$.
Here, one is interested not only in the invertibility of $a(x,D)$ but also  in the invertibility of  $\lambda+a(x,D)$, $\lambda \in \rho(a(x,D))$. In particular, we are interested in the  norm of the operator $[\lambda+a(x,D)]^{-1}$ uniformly for all $\lambda$ belonging to the set of resolvents.
However, executing a careful analysis, we can see that certain constants depend only on the first or second derivative on
the symbol of $\lambda+a(x,D)$, which has the effect that this norm is independent of $\lambda$. Hence,  it is necessary to introduce the following class.%
\begin{defn} 
	Let 
	$\rho,\delta$ be two real numbers such that $0\le \rho\le 1$ and $0\le \delta\le 1$. Let $m\in\RR$ and $\kappa\in\NN_0$.
	Let $\tilde \CA^{m,\kappa}_{k_1,k_2;\rho,\delta}(\RR^ d\times \RR^ d)$ be the set of all functions $a:\RR ^d \times \RR^ d \to \mathbb{C}$, where
	\begin{itemize}
		\item  $a(x,\xi)$ is infinitely often differentiable, i.e.\ $a\in \cal_b^\infty(\RR^d \times\RR^d)$;
		\item
		for any two multi-indices $\alpha$ and $\beta$, with $|\alpha|\ge \kappa$, there exists $C_{\alpha,\beta}$  such that 
		$$
		\lk| \partial ^ \alpha_{\xi'} \partial ^ \beta_x a(x,\xi')\Big|_{\xi'=\gamma\xi} \rk|\leq C_{\alpha,\beta}  
		\gr| \gamma\xi|\gl ^ {m-\rho|\alpha|}\ggx^{\delta|\beta |},\quad x\in \RR^d,\xi\in \RR^ d .
		$$
	\end{itemize}
\end{defn}
For $k_1,k_2\in\NN_0$, we also introduce the following  semi--norm  for $ a\in \tilde{\CA} ^{m,\kappa}_{k_1,k_2;\rho,\delta}(\RR^d,\RR^d)$ by
\DEQS
\lqq{ \| a\| _{\tilde{\CA}_{k_1,k_2;{\rho,\delta}}^{m,\kappa} }
} &&
\\
&=& \sup_{\kappa\le|\alpha|\le k_1,|\beta|\le k_2} \, \sup_{(x,\xi,\gamma)\in \RR^d
	\times \CU_1\times [1,\infty) } \lk| \partial ^ \alpha_{\xi'} \partial ^ \beta_x a(x,\xi')\Big|_{\xi'=\gamma\xi} \rk|
\gr|\gamma\xi|\gl  ^ {\rho|\alpha|-m}\ggx^{\delta|\beta|} \,.
\EEQS
Now we are ready to state the main result of this section.
\begin{rem}
The outline of the proof of following theorem, i.e. Theorem \eqref{invert22} is quite similar to the proof of Theorem 5.4 in \cite{kumanago}, however there is an important difference. We have to introduced a symbol class $\tilde \CA^{m,\kappa}_{k_1,k_2;\rho,\delta}(\RR^ d\times \RR^ d)$, since we needed to construct the parametrix of the resolvent of $[\lambda+a(x,D)]$ of an operator $a(x,D)$, where $\lambda$ belongs to $\sigma(A)$ and can be quite large.	
\end{rem}
\begin{thm}\label{invert22}
	Let $ k \ge 0$, $m\in\RR$, $1\le p,r<\infty$. 
	Let $a(x,\xi)$ be a symbol such that $a\in \tilde \CA^{1,-1}_{2d+4,d+3;1,0}(\RR^d\times \RR^d)\cap \Hyp_{d+1,0;1,0}^{\kappa}(\RR^d\times\RR^d)$ for $\kappa=[k]$.
	Let $R\in\NN$ such that 
\DEQSZ\label{r1cond}
R &\ge& 10\times d \times  \lk\| a\rk\|_{\tilde \CA_{2d+1,d+1;1,0}^{1,-1}}\, \|a\|_{\tiny \Hyp_{2d+1,0;1,0}^{\kappa}}
\EEQSZ
	and
\DEQSZ\label{r2cond}
\gr|\gamma\xi|\gl^{\kappa}&\le& \frac{|a(x,\gamma\xi)|}{\,\,\,\,|a|_{\CA^\kappa_{0,0;1,0}}}\quad \mbox{for all} \quad x\in\RR^d\quad \mbox{and} \quad\xi\in \CU_1\quad \mbox{with}\quad \gamma\ge R.
\EEQSZ
	Then, there exists  
a  bounded pseudo--differential operator $B:B_{p,r}^{m}(\RR^d)\to B_{p,r}^{m}(\RR^d)$ with symbol $b(x,D)$, such that
	\begin{itemize}
		\item $\{ (\xi,x)\in \RR^d\times \RR^d: \sup_{x\in\RR^d}    b(x,\xi)>0 \}\subset \{ |\xi|\le 2 R\}$,
		\item $B$ has norm $R^k$ on $B_{p,r}^{m}(\RR^d)$ into itself,
		\item $
		a(x,D)=A+B
		$,
	\end{itemize}
	and, given $f\in B_{p,r}^{m}(\RR^d)$, the problem
	\DEQSZ\label{probelm01}
	A u(x) = f(x),\quad x\in\RR^d
	\EEQSZ
	has a unique solution $u$ belonging to $ B_{p,r}^{m+\kappa}(\RR^d)$.
	In addition,
	there exists a constant $C_1>0$ such that for all $f\in B_{p,r}^{m}(\RR^d)$ and $u$ solving \eqref{probelm01} we have
	$$
	|u|_{B_{p,r}^{m+\kappa}} \le C_1\,\|a\|_{\tiny \Hyp_{d+1,0;1,0}^{\kappa}}
	|f|_{ B_{p,r}^{m}},\quad f\in B_{p,r}^{m}(\RR^d).
	$$
\end{thm}
\begin{rem}
Since $a(x,\xi)$ is elliptic, we can find a number $R>0$ satisfying \eqref{r1cond} and \eqref{r2cond}.
\end{rem}

\begin{rem}
	In fact, analyzing the resolvent $[\lambda+a(x,D)]^{-1}$  of an operator $a(x,D)$, it will be important that we have here in the Theorem the norm of $\tilde \CA^{m,1}_{\rho,\delta}(\RR^ d\times \RR^ d)$ and
	not the norm of  $\CA^{m}_{\rho,\delta}(\RR^ d\times \RR^ d)$. The reason is that calculating the norm in $\tilde \CA^{m,1}_{\rho,\delta}(\RR^ d\times \RR^ d)$
	the first derivative has to be taken. Therefore, the norm in $\tilde \CA^{m,1}_{\rho,\delta}(\RR^ d\times \RR^ d)$ is independent of $\lambda$.
\end{rem}


\begin{proof}
Note, that, for convenience for the reader, we summerized several definition and results necessary for the proof in the appendix \ref{pseudo-app}.
	For simplicity, let $E=B_{p,r}^{m}(\RR^d)$.
	Let $\chi\in \cal^\infty_b(\RR_0^+)$ such that
	$$
	\chi(\xi)=\bcase 0 &\mbox{ if } |\xi|\le 1,\\1 &\mbox{ if } |\xi|\ge 2,
\\ \in (0,1) &\mbox{ if } |\xi|\in (1,2). \ecase
	$$
\del{	A good choice is e.g.\
	\DEQSZ\label{goodchoice}
	\chi (\xi) = \bcase 0 &\quad \mbox{for} \quad |\xi|\le 1
	\\  1-g(|\xi|) &\quad \mbox{for} \quad 1\le |\xi|\le 2
	\\1 &\quad \mbox{for} \quad |\xi|\ge 2
	\ecase
	\EEQSZ
	where
	$$
	g(x):= \exp\lk[-\frac{(1-x)^2)}{(4-x^2)}\rk].
	$$}
	Let us put $\chi_R(\xi):=\chi(\xi/R)$, $\xi\in\RR^d$.
	%
	%
%
In addition, let us set
$$ p_R(x,\xi):= a(x,\xi)\chi_R(\xi), \quad b(x,\xi):= a(x,\xi)(1-\chi_R(\xi)),  \quad \mbox{and} \quad
q_R(x,\xi):=\frac{1}{a(x,\xi)}\chi_R(\xi).
$$
Due to the condition on $R$,
	the function $q_R(x,\xi)$ is bounded and  as a symbol, it is well defined.

\medskip
	Let us  consider the following problem:
Given $f\in B_{p,r}^m(\RR^d)$, find  an element  $u\in\CSS'(\RR^d)$ such that we have
	\DEQSZ\label{shortq}
	p_R(x,D) u(x) = f(x),\quad x\in\RR^d.
	\EEQSZ
	Observe, that on one hand for a solution $u$ of \eqref{shortq} we have
	\DEQS
	\lk[ q(x,D)  p_R(x,D) \rk] u = q(x,D) f, 
	\EEQS
	and, on the other hand, by Remark \ref{compositionremark},  the symbol for
	$q(x,D) \, p_R(x,D)$ is given by
	\DEQS\lqq{
	(q\, \sharp\,  p_R)(x,\xi)=q(x,\xi) p_R(x,\xi)} &&
\\
& & {}+
C(x,\xi)+
\sum_{|\gamma |=\max(d-k+2,k)}
	\OS\iint 
	e^{- i\la y,\eta\ra }  \eta ^\gamma
	r_\gamma (x,\xi,y,\eta)\, dy\, d\eta,
\EEQS
	where
\DEQSZ\label{naechstersummnd}
C(x,\xi)=\sum_{1\le |\rho|\le \max(d-k+1,k-1)} \partial _\xi^\rho q(x,\xi)  \partial _x^\rho  p_R(x,\xi)
\EEQSZ
and
	\DEQSZ\label{gammar}
	r_\gamma (x,\xi,y,\eta)=\int_0^1 \lk[ \partial^\gamma   _{\xi'} q(x',\xi')\Big|_{\xi'=\xi-\theta \eta\atop x'=x}
	\partial^\gamma   _{x'} p_R(x',\xi')\Big|_{\xi'=\xi\atop x'=x-y}   \rk]\, d\theta.
	\EEQSZ
Observe, firstly, for $\xi\in\gamma\CU_1$ we have 
\DEQSZ\label{careful}
\lqq{ | \partial _\xi^\rho q(x,\xi)  \partial _x^\rho  p_R(x,\xi)|} &&\nonumber
\\ &\le &2|a|_{\tiny \Hyp_{|\rho|,0;1,0}^{\kappa}} \, \lgxi^{-\kappa-|\rho|} \, |p|_{\tilde \CA^{\kappa,1}_{0,\rho;0,0}} \lgxi^{\kappa}
\le 2|a|_{\tiny \Hyp_{|\rho|,0;1,0}^{\kappa}} \, |p|_{\tilde \CA^{\kappa,1}_{0,\rho;0,0}} \lgxi^{-1}
\EEQSZ
	Observe, secondly, that by integration by part 
	we have
	\DEQS
	\OS\iint 
	e^{- i\la y,\eta\ra }  \eta ^\gamma
	r_\gamma (x,\xi,y,\eta)\, dy\, d\eta
	=-\,\OS\iint 
	e^{- i\la y,\eta\ra } 
	\partial_\zeta^\gamma r_\gamma (x,\xi,\zeta,\eta)\big|_{\zeta=y}\, dy\, d\eta
	.
	\EEQS
	Putting
	$$
	m_R(x,\xi):=  \sum_{|\gamma| =1} \OS\iint e^{- i\la y,\eta\ra } 
	\partial y^\gamma r_\gamma  (x,\xi,y,\eta)\, dy\, d\eta,
	$$
	one can verify that  $m_R(x,\xi)\in \tilde \CA^{-1,1}_{d+1,0;1,0}(\RR^d\times \RR^d)$.
	In fact, since $a\in \tilde  \CA^{\kappa,1}_{2d+4,d+3;1,0}(\RR^d\times \RR^d)\cap
	\mbox{Hyp}^{\kappa}_{d+1,0;1,0}(\RR^d\times \RR^d)$,
	we know by Theorem \ref{theoremb1} that 
	$$
	\partial_y^\gamma r_\gamma(\cdot,\cdot,x,\xi)\in \CA_{d+1,d+3;1,0}^{-1}(\RR^d\times \RR^d)\cap \CA_{d+1,0;1,0}^{0}(\RR^d\times \RR^d).
	$$
	This can be seen by straightforward calculations.
	First, \del{we have
	$$
	\partial_\xi ^\gamma\lk[\frac 1 {p_R(x,\xi)} \rk] = \frac 1 {[p _R(x,\xi)]^2} \,\,\partial_\xi ^\gamma p_R(x,\xi)\lesssim \lgxi^{-2\kappa } \lgxi^{\kappa-|\gamma|}
	\lesssim \lgxi^{-\kappa-|\gamma|},\quad \xi \in \RR^d.
	$$
	In particular,}
 by the definition of the hypoelliptic norm we have for any multiindex $\alpha$ and $\xi\in\delta\CU_1$, $\delta>1$ 
	$$
	\partial_\xi ^{(\gamma,\alpha)}\lk[\frac 1 {p_R(x,\xi)} \rk]\le \|a\|_{\tiny \Hyp_{|\alpha|+|\gamma|,0;1,0}^{\kappa}}\lgxi^{-\kappa-|\alpha|-|\gamma| }. 
	$$
	Next,  by the definition of the norm in $\tilde \CA^{\kappa,1}_{1,1;1,0}$ we have for any multiindex $\alpha$
	$$
	\partial_x ^{(\gamma,\alpha)} {p_R(x,\xi)} \le 
\|a\|_{\tilde \CA^{\kappa,1}_{|\alpha|+|\gamma|,1;1,0}}\,  \lgxi^{\kappa},\quad \xi\in\delta\CU_1, \,\delta>1.
	$$
	Going back to the operator $m_R(x,D)$. By Theorem 3.13 in \cite[p.\ 50]{pseudo2},
	we can interchange the derivatives with the oscillatory integral. That is
	\DEQS
	\partial^\alpha_\xi m_R(x,\xi) =
	\iint e^{- i\la y,\eta\ra} \int_0^1\partial^\alpha_\xi \lk[ \partial^\gamma   _{\xi'} q(x',\xi')\mid_{\xi'=\xi+\theta \eta\atop x'=x}
	\partial^\gamma   _{x'} p_R(x',\xi')\mid_{\xi'=\xi\atop x'=x+y}   \rk]\, d\theta
	\, dy\, d\eta,
	\EEQS
Secondly, \del{straightforward calculations gives for two symbols $p_1$ and $p_2$ the Leibniz formula
$$
\partial ^\alpha (p_1(x,\xi)p_2(x,\xi))=\sum_{|\beta_1|+|\beta_2|=n} \lk( {|\alpha|\atop \beta_1!\beta_2!}\rk) \partial ^{\beta_1}p_1(x,\xi)\partial^{\beta_2} p_2(x,\xi).
$$
	%
	Thirdly,}
by the Young inequality for a product, we know   for $s>0$
	$$\gr \xi+\theta \eta \gl^{-2s}\le \gr \xi\gl^{-s}\gr \theta \eta\gl^{-s}, 
	$$
and
by the Peetre inequality (see \cite[Lemma 3.7, p.\ 44]{pseudo2}),  we know for $s>0$
	$$\gr \xi+\theta \eta \gl^{s}\le \gr \xi\gl^{s}\gr \theta \eta\gl^{s}. 
	$$
Next,
straightforward calculations gives for $s>d$
\DEQS
\int_{\RR^d}\gr \eta\gl^{-s}\,d\eta\,\le C.
\EEQS
\del{In addition, we have for $s>d$ and $0<\ep<\frac 1d$
\DEQS
\lqq{ \int_0^1 \int_{\RR^d}\gr \theta \eta\gl^{-s}\,d\eta\, d\theta }&&
\\
&\le& \int_0^1 \int_{B(\theta^{-\ep })}\gr \theta \eta\gl^{-s}\,d\eta\, d\theta+\int_0^1 \int_{\RR^d\setminus B(\theta^{-\ep })}\gr \theta \eta\gl^{-s}\,d\eta\, d\theta
\\
&\le& \int_0^1\lk(
C\, \theta^{-d\ep }+ \frac 1 \theta \int_{\RR^d\setminus B(\theta \theta^{-\ep })}\gr y\gl^{-s}\,dy\,\rk)d\theta
\le \int_0^1\lk(
C\, \theta^{-d\ep }+ \frac 1 \theta \,\theta^{(1-\ep )(d-s)}\rk)d\theta\le C.
\EEQS}
Using $\partial_\eta^{\rho} e^{-i\la y,\eta\ra} =(-y)^\rho e^{-i\la y,\eta\ra}$, where $\rho$ is a multiindex, and integration by parts, gives
	\DEQS
\lqq{	\partial^\alpha_\xi m_R(x,\xi)}&&
\\
&=& \sum	\iint (-y)^{-\rho} e^{- i\la y,\eta\ra} \int_0^1\partial^\alpha_\xi \partial^{\rho}_\eta \lk[ \partial^\gamma   _{\xi'} q(x',\xi')\mid_{\xi'=\xi+\theta \eta\atop x'=x}
	\partial^\gamma   _{x'} p_R(x',\xi')\mid_{\xi'=\xi\atop x'=x+y}   \rk]\, d\theta
	\, dy\, d\eta
\\
&=& \sum	\iint (-y)^{-\rho} e^{- i\la y,\eta\ra} \int_0^1\partial^\alpha_\xi \theta^{d+1}\lk[ \partial^{\gamma+\rho}   _{\xi'} q(x',\xi')\mid_{\xi'=\xi+\theta \eta\atop x'=x}
	\partial^\gamma   _{x'} p_R(x',\xi')\mid_{\xi'=\xi\atop x'=x+y}   \rk]\, d\theta
	\, dy\, d\eta,
	\EEQS
Here the sum runs over all multiindex of the form $(d+1,0,\ldots,0)$, $(0,d+1, \ldots,0)$, $\cdots$, $(0,\ldots,d+1)$.
Analysing the proof of Theorem 3.9 \cite{pseudo2} we see that we have to estimate
	\DEQS
|	\partial^\alpha_\xi m_R(x,\xi) |
&\le&
	\iint \int_0^1 \theta^{d+1}
|y|^{-(d+1)} \lk[   \gr \xi+\theta \eta \gl^{-(|\gamma|+|\alpha|+\kappa+(d+1))}  
 \gr \xi \gl^{\kappa-|\alpha|} 
    \rk]\, d\theta
	\, dy\, d\eta
\\
&\le&
	\int \int_0^1 \theta^{d+1}
\lk[ \gr \xi   \gl^{-|\alpha|}   \gr \xi\gl^{-\frac 12 (|\gamma|+\kappa+(d+1))}   \gr  \theta \eta  \gl^{-\frac 12 (|\gamma|+\kappa+(d+1))}  
 \gr \xi \gl^{\kappa-|\alpha|} 
    \rk]\, d\theta
\, d\eta
\\
&\le&\gr \xi   \gl^{-2|\alpha|-(d+1)}
	\int \int_0^1 \theta^{d+1}
   \gr  \theta \eta  \gl^{-\frac 12 (|\gamma|+\kappa+(d+1))}  
\, d\theta
\, d\eta
\\
&\le&\gr \xi   \gl^{-2|\alpha|-(d+1)}
	\int \int_0^1 \theta
   \gr  \eta  \gl^{-\frac 12 (|\gamma|+\kappa+(d+1))}  
\, d\theta
\, d\eta.
	\EEQS
The calculation above gives that for $|\gamma|>d-k+1$, the integration with respect to $\eta$ and $\theta$ is finite.
	Taking into account Theorem \ref{theoremb1}, we can verify for $\xi\in\delta\CU_1, \,\delta>1$ that
	\DEQS
	\lqq{
		\sup_{|\alpha|\le d+1}\lk|\partial_\xi^\alpha  m_R(x,\xi) \rk|\gr |\xi|\gl ^{|\alpha |+1}  }
	&&
	\\
	&\lesssim&
	\sup_{1\le |\alpha|\le d+1\atop 1\le |\beta|\le 2d+1}
	\lk|\partial_\xi^\alpha  \partial_\xi^\beta  \lk[{q_R(x,\xi)}  \rk]\rk|
	\, \sup_{ |\delta|\le d+1 }\lk|\partial_x^\delta p_R(x,\xi)\rk|.
	\EEQS
	Hence, by the generalized Leibniz  rule (see \cite[p.\ 200, (A.1)]{pseudo2}) we have
	$$
	\lk\| m_R\rk\|_{\CA_{d+1,0;1,0}^{-1}} \le \lk\| q_R \rk\|_{\mbox{\rm\tiny Hyp}_{2d+1,0;1,0}^{\kappa}}\lk\| p_R\rk\|_{\tilde \CA_{2d+1,2d+1;1,0}^{\kappa,1}},
	$$
	from what it follows that
	$m_R(x,D)$ is a bounded operator with from $B_{p,r}^m(\RR^d)$ to $B_{p,r}^{m+1}(\RR^d)$.
	In addition, by the same analysis, we get
	$$
	\lk\| m_R\rk\|_{\CA_{d+1,0;1,0}^{0}} \le \lk\| q \rk\|_{\mbox{\rm\tiny Hyp}_{2d+1,0;1,0}^{\kappa}}\lk\| p_R\rk\|_{\tilde \CA_{2d+1,2d+1;1,0}^{\kappa-1,1}}.
	$$
	Observe, that  
	$$\lk\| p_R\rk\|_{\tilde \CA_{2d+4,d+3;1,0}^{\kappa-1,1}}\le \frac 1 R  \lk\| p_R\rk\|_{\tilde \CA_{2d+1,2d+1;1,0}^{\kappa,1}}.
	$$
	Therefore, analysing the symbols, $m_R(x,D)$  is a bounded operator from $B_{p,r}^{m}(\RR^d)$ to $B_{p,r}^{m}(\RR^d)$
	having norm
	$$\|m_R\|_{\CA_{d+1,0;1,0}^{0}}\le \frac 1R\, 
	\|m_R\|_{\CA_{d+1,0;1,0}^{-1}}.
	$$
	
	\medskip

	Now, let us go back to a slightly modified problem to verify for a given $f\in B_{p,r}^m(\RR^d)$, the regularity of $u$, where $u$ solves
	\DEQSZ\label{problem1}
p_R(x,D) u(x) = f(x),\quad x\in\RR^d.
	\EEQSZ
	From before, we know that 
	\DEQS
(q\sharp p_R)(x,\xi) &=&
q(x,\xi)p_R(x,\xi) + C(x,\xi) + 
m_R(x,\xi)
=I + C(x,\xi) + 
m_R(x,\xi).
	\EEQS
A careful analysis (see \eqref{careful}) shows that $C\in \CA^{-1}_{d+1;0;1,0}$, and
$$
\|C\|_{\CA^1_{d+1;0,1,0}}\le \frac 1R |C|_{\CA^{-1}_{d+1;0;1,0}}.
$$
	Due to the assumption on $R$ we know that  $R$ is such large that
	$$\Big(\|C\|_{\CA^1_{d+1;0,1,0}}+\| m_R\|_{\CA_{d+1,0;1,0}^{0} }\Big)\le \frac 16.
	$$

	First, we will show that if $f\in B_{p,r}^m(\RR^d)$, then it follows that  $u\in B_{p,r}^m(\RR^d)$.
	Suppose 
	for any $M\in\NN$ we have $|u|_{B_{p,r}^m}\ge M$. Since,  from before we know that
	\DEQS
	( I+ C(x,D)+m_R(x,D)) u= q(x,D)  f,
	\EEQS
	we get
	\DEQS
	| (I+C(x,D)+ m_R(x,D)) u|_{B_{p,r}^m} \ge \lk||u|_{B_{p,r}^m}-\frac 1R |u|_ {B_{p,r}^m}\rk|\ge \frac 56 |u|_ {B_{p,r}^m}.
	\EEQS
	On the other side,
	\DEQS
	\lk| (I+ C(x,D)+m_R(x,D)) u\rk| _{B_{p,r}^m}=\lk|  q(x,D)  f\rk|_  {B_{p,r}^m}\le \|q\|_{\CA^{0}_{d+1,0;1,0}} |f|_ {B_{p,r}^m}<\infty,
	\EEQS
	which leads to a contradiction, since we assumed that  for any $M\in\NN$ we have $|u|_{B_{p,r}^m}\ge M$. Hence, we know that $u\in B_{p,r}^m(\RR^d)$.
	In the next step, we will show that we have even $u\in B_{p,r}^{m+\kappa}(\RR^d)$ and calculate its norm in this space.
	Using similar arguments as above, we know 	by Theorem
\ref{bound1} and Remark \ref{bound11} that
	\DEQS
	|q(x,D) f |_{B^{m+\kappa}_{p,r}}\le \| q\|_{\CA^{-\kappa}_{d+1,0;1,0}} |f|_{B^{m}_{p,r}}
	\EEQS
	Similar as in the proof of Theorem 3.24 in \cite[p.\ 59]{pseudo2} we
	define
	$$\tilde q(x,D) :=\sum_{j=0}^{k}
	(-1)^j(C(x,D)+ m_R(x,D))^jq(x,D),
	$$
	where
	$$
(C(x,D)+	m_R(x,D))^j=\underbrace{(C(x,D)+m_R(x,D))\ldots (C(x,D)+m_R(x,D))}_{j \quad \mbox{times}}.
	$$
	Since the right hand side is an alternating sum, it follows by the identity
	$$
	q(x,D)p_R(x,D)=I+C(x,D)+m_R(x,D)u(x)
	$$
	that
	\DEQSZ\label{itera}
	\tilde q(x,D) p_R(x,D)=  I+(-1)^{k+1}(C(x,D)+ m_R(x,D))^{k+1}.
	\EEQSZ
	On the other side, since
	$$
	u(x)=q(x,D)f(x)-(C(x,D)+m_R(x,D))u(x),
	$$
	we have
	\DEQS
	q(x,D)f(x)
	&=& q(x,D)p_R(x,D)u(x)
	=\lk(I+C(x,D)+m_R(x,D)\rk)u(x).
	\EEQS
	Since $\lk|m_R(x,\xi)\rk|_{\CA_{d+1,0;1,0}^{0} }\le \frac 16$,
	the sequence $\{u_N:n\in\NN\}$
	defined by
	$$
	u_N(x)=\lk(I+\sum_{k=1}^N (-1)^k(C(x,D)+m_R(x,D))^k \rk)q(x,D)f(x),
	$$
	is bounded and a Cauchy sequence.
	Therefore, there exists a $u$ with $u_N\to u $ strongly,
	and, therefore,
	\DEQS
	\lqq{|u|_{B_{p,r}^{m+\kappa} }}
	&&
	\\
	&\lesssim & \|  q\|_{\mbox{\rm \tiny Hyp}
		_{d+1,0;1,0}^{\kappa,\kappa} } \lk(1+\sum_{k=1}^\infty \|C(x,D)+m_R\|_{\tilde\CA_{d+1,0;1,0}^{-1,1} }^k \rk)  |f|_{B_{p,r}^m}
	\\
	&\lesssim & \|  q\|_{\mbox{\rm \tiny Hyp}
		_{d+1,0;1,0}^{\kappa,\kappa} } \lk(1+\sum_{k=1}^\infty \lk(\frac{1}{6}\rk)^k \rk)  |f|_{B_{p,r}^m}
	\\
	&\lesssim& \frac 65\, \|  q\|_{\mbox{\rm \tiny Hyp}
		_{d+1,0;1,0}^{\kappa,\kappa} }  |f|_{B_{p,r}^m}.
	\EEQS
	This gives the assertion.
	\del{$$
		|u|_{B_{p,r}^{m+\kappa} }\lesssim \|   a\|_{\mbox{\rm \tiny Hyp}
			_{d+1,0;1,0}^{\kappa,\kappa} } \lk( 1+ |   a|_{\mbox{\rm \tiny Hyp} _{2d+4,d+3;1,0}^{\kappa} }|a|_{\CA_{d+1,0;1,0}^\kappa}\rk)  |f|_{B_{p,r}^m}
		$$
	}
\end{proof}

\appendix

\section{Symbol classes and pseudo--differential operators}\label{pseudo-app}

In this section, we shortly introduce pseudo--differential operators and their symbols. Also, we introduce the definitions and Theorems which are necessary for our purpose. For a detailed introduction on pseudo--differential operators and their symbols in the context of partial differential equations
we recommend the books \cite{pseudo2,pseudo,shubin,wong}, or the monograph of Kumano-go \cite{kumanago}, in the context of Markov processes we recommend the books \cite{Jacob-I,Jacob-II,Jacob-III} or the survey \cite{levymatters}.

In order to treat pseudo-differential operators, different classes of symbols have been introduced.
Here, we closely follow the definition of \cite{pseudo2}.

\begin{defn} 
Let 
 $\rho,\delta$ two real numbers such that $0\le \rho\le 1$ and $0\le \delta\le 1$.
  Let $S^m_{\rho,\delta}(\RR^ d\times \RR^ d)$ be the set of all functions $a:\RR ^d \times \RR^ d \to \mathbb{C}$, where
 \begin{itemize}
   \item  $a(x,\xi)$ is infinitely often differentiable, i.e.\ $a\in \cal^\infty_b(\RR^d \times\RR^d)$;
   \item
for any two multi-indices $\alpha$ and $\beta$ there exists $C_{\alpha,\beta}>0$  such that 
$$
\lk| \partial ^ \alpha_{\xi'} \partial ^ \beta_x a(x,\xi')\Big|_{\xi'=\gamma\xi} \rk|\leq C_{\alpha,\beta}  
\gr|\gamma\xi|\gl ^ {m-\rho|\alpha|}\ggx^{\delta| \beta|},\quad x\in \RR^d,\xi\in \CU_1,\,\gamma\ge 1 .
$$
 \end{itemize}
\end{defn}
We call any function $a(x,\xi)$ belonging to  $\cup_{m\in\RR} S^m_{0,0}(\RR^d ,\RR^d )$ a {\sl symbol}.
For many estimates, one does not need that the function is infinitely often differentiable.
It is often only necessary to know the estimates with respect to $\xi$ and $x$ up to a particular order.
For this reason, one also introduces the following classes. 
\begin{defn}(compare \cite[p. 28]{wong})
Let $m\in\RR$.  Let $\CA ^m_{k_1,k_2;\rho,\delta}(\RR ^d,\RR ^d)$ be the set of all functions $a:\RR ^d \times \RR^ d \to \mathbb{C}$, where
 \begin{itemize}
   \item  $a(x,\xi)$ is $k_1$--times differentiable in $\xi$ and $k_2$ times differentiable in $x$;
   \item 
for any two multi-indices $\alpha$ and $\beta$ with $|\alpha|\le k_1$ and $|\beta |\le k_2$,
there exists a  constant $C_{\alpha,\beta}>0$ depending only on $\alpha$ and $\beta$ such that
$$
\lk| \partial ^ \alpha_{\xi'} \partial ^ \beta_x a(x,\xi')\Big|_{\xi'=\gamma\xi} \rk|\leq C_{\alpha,\beta}  
\gr|\gamma\xi|\gl ^ {m-\rho|\alpha|}\ggx^{\delta| \beta|},\quad x\in \RR^d,\xi\in \CU_1,\,\gamma\ge 1 .
$$
\end{itemize}
\end{defn}
\noindent
Moreover, one can introduce a  semi--norm in $\CA^m_{k_1,k_2;\rho,\delta}(\RR^d,\RR ^d)$ by
\DEQS
\lqq{ \| a\| _{\CA_{k_1,k_2;{\rho,\delta}}^m }
} &&
\\
&=& \sup_{|\alpha|\le k_1,|\beta|\le k_2} \, \sup_{(x,\xi,\gamma)\in \RR^d
\times \CU_1\times[1,\infty)} \lk| \partial ^ \alpha_\xi \partial ^ \beta_x a(x,\xi) \rk|
\gr|\gamma\xi|\gl  ^ {\rho|\alpha|-m}\ggx^{\delta|\beta|}, \quad a\in \CA ^m_{k_1,k_2;\rho,\delta}(\RR^ d\times \RR^ d).
\EEQS
\begin{rem}
For $m_1\le m_2$ it follows that  $S^{m_1}_{\rho,\delta}(\RR^d ,\RR ^d)\supseteq S^{m_2}_{\rho,\delta}(\RR^d ,\RR ^d)$ and $\CA^{m_1}_{k_1,k_2;\rho,\delta}(\RR^d ,\RR ^d)\supseteq \CA^{m_2}_{k_1,k_2;\rho,\delta}(\RR^d ,\RR ^d)$, $k_1,k_2\in\NN$.
\end{rem}

\begin{defn}(compare \cite[p.28, Def. 4.2]{wong})
Let $a(x,\xi)$ be a symbol. 
Then, to  $a(x,\xi)$ corresponds an operator $a(x,D)$ defined by
$$
a(x,D) \,u (x): = \int_{\RR^d} e^ {i\la x,\xi\ra } a(x,\xi)\,\hat u(\xi)\, d\xi,\quad u\in \CSS( 
\RR^ d)
$$
and called pseudo--differential operator.
\end{defn}

Clearly, $a(x,D) $ is bounded from $\CSS(\RR^d)$  into $\CSS'(\RR^d)$.

\begin{cor}\label{limitting}
Let $u\in H^m_2(\RR^d)$ for all  $m\in\RR$. Then 	$$
a(x,D) \,u (x): = \int_{\RR^d} e^ {i\la x,\xi\ra } a(x,\xi)\,\hat u(\xi)\, d\xi,
$$
is well defined with $a(x,D)$ being a pseudo-differential operator.
\end{cor}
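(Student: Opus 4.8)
The plan is to reduce the whole statement to one Cauchy--Schwarz estimate, using that a symbol $a\in\cup_{m\in\RR}S^m_{0,0}(\RR^d,\RR^d)$ has \emph{all} of its $x$- and $\xi$-derivatives bounded by a fixed power of $\ggxi$ (with $\delta=0$, so no growth in $x$), while $u\in\bigcap_{m\in\RR}H^m_2(\RR^d)$ has a Fourier transform that is weighted-$L^2$ against every power of $\ggxi$.

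First I would fix $M\in\RR$ with $a\in S^M_{0,0}(\RR^d,\RR^d)$, so that $|\partial_x^\beta a(x,\xi)|\le C_\beta\,\ggxi^{M}$ for every multi-index $\beta$, uniformly in $x$ (this uniformity is the point where $\delta=0$ is used). Next, fix $N\in\NN$ with $2N>d$. For each fixed $x\in\RR^d$ the Cauchy--Schwarz inequality gives
\[
\int_{\RR^d}\bigl|a(x,\xi)\,\hat u(\xi)\bigr|\,d\xi
\le C\int_{\RR^d}\ggxi^{M}\,|\hat u(\xi)|\,d\xi
\le C\Bigl(\int_{\RR^d}\ggxi^{2(M+N)}|\hat u(\xi)|^2\,d\xi\Bigr)^{1/2}\Bigl(\int_{\RR^d}\ggxi^{-2N}\,d\xi\Bigr)^{1/2}.
\]
The first factor equals a constant multiple of $\|u\|_{H^{M+N}_2}$, finite by hypothesis, and the second is finite since $2N>d$; both bounds are independent of $x$. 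Hence the integral defining $(a(x,D)u)(x)$ converges absolutely for every $x\in\RR^d$, and the resulting function is bounded on $\RR^d$.

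Then I would upgrade this to smoothness and to the assertion that $a(x,D)$ "is a pseudo-differential operator''. Differentiating the integrand $e^{i\la x,\xi\ra}a(x,\xi)\hat u(\xi)$ in $x$ produces finitely many terms of the form (polynomial in $\xi$ of degree $\le|\beta|$)$\times\partial_x^{\beta'}a(x,\xi)\times\hat u(\xi)$, each dominated in modulus by $C_\beta\,\ggxi^{M+|\beta|}|\hat u(\xi)|$, which is again integrable by the very same estimate applied with $u\in H^{M+|\beta|+N}_2(\RR^d)$. The classical differentiation-under-the-integral theorem then yields $a(x,D)u\in\cal^\infty_b(\RR^d)$. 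Finally, for $u\in\CSS(\RR^d)\subset\bigcap_mH^m_2(\RR^d)$ the displayed formula is \emph{literally} the original definition of $a(x,D)$; since $\CSS(\RR^d)$ is dense in each $H^m_2(\RR^d)$ and $a(x,D)$ extends to a bounded operator $H^m_2(\RR^d)\to H^{m-M}_2(\RR^d)$ (standard, see e.g.\ \cite{wong,pseudo2}), the integral formula realizes this canonical extension on $\bigcap_mH^m_2(\RR^d)$. Thus the object defined by the integral is well defined and coincides with the pseudo-differential operator $a(x,D)$.

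The only genuinely delicate step is the last one: verifying that the pointwise integral formula agrees with the extension-by-density of $a(x,D)$, rather than merely defining \emph{some} operator. This is handled by a continuity argument, using that both the Fourier transform and multiplication by $a(x,\cdot)$ are continuous in the relevant weighted $L^2$ topologies together with the density of $\CSS(\RR^d)$; once this is in place, everything else is the Cauchy--Schwarz bound above plus routine dominated-convergence bookkeeping.
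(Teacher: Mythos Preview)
Your argument is correct and more direct than the paper's. You establish absolute convergence of the defining integral via a single Cauchy--Schwarz estimate, exploiting that $u\in H^{M+N}_2(\RR^d)$ for every choice of $M+N$; this immediately gives pointwise (indeed $\cal^\infty_b$) well-definedness of the formula, and your closing density remark ties it to the canonical extension.

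The paper proceeds by duality instead: for $v,\phi\in\CSS(\RR^d)$ it rewrites $(a(x,D)v,\phi)_{L^2}$ via Fubini as $(v,a^\ast(x,D)\phi)$ with $a^\ast(x,D)\phi\in\CSS(\RR^d)$ (citing \cite[Lemma~3.31]{pseudo2}), and then passes to $u\in H^m_2(\RR^d)$ through the density of $\CSS(\RR^d)$ and the distributional pairing $\langle a(x,D)u_n,\phi\rangle=\langle u_n,a^\ast(x,D)\phi\rangle\to\langle u,a^\ast(x,D)\phi\rangle$. Your route is more elementary and addresses the literal claim (convergence of the integral for each $x$) head-on; the paper's adjoint argument is the standard template for extending $a(x,D)$ to tempered distributions or to a \emph{single} Sobolev space, situations in which the strong hypothesis ``$u\in H^m_2$ for all $m$'' is unavailable and absolute convergence of the integral may genuinely fail. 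Here both work; you are simply taking full advantage of the very strong assumption on $u$.
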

\begin{proof}
Let $v,\phi\in \CSS(\RR^d)$. Then consider,
\DEQS
\lqq{ (a(x,D)v,\phi)_{L^2(\RR^d)}= \int_{\RR^d} \int_{\RR^d} e^ {i\la x,\xi\ra } a(x,\xi)\,\hat v(\xi)\, d\xi\overline{\phi(x)}\,dx
} &&
\\
&=&  \int_{\RR^d} \int_{\RR^d} e^ {ix^T\xi} a(x,\xi )\overline{\phi(x)}\,dx\,\hat v(\xi)\, d\xi.
\\
&=&  \int_{\RR^d} \hat v(\xi)\overline{\int_{\RR^d} e^ {i\la x,\xi\ra } \overline{a(x,\xi )}\phi(x)\,dx}\,\, d\xi,
\EEQS
where we used Fubini theorem and the fact that $\phi, \hat{v}\in \CSS(\RR^d)$. In Lemma 3.31 in \cite{pseudo2} showed that
$$w(\xi)=\int_{\RR^d} e^ {i\la x,\xi\ra } \overline{a(x,\xi )}\phi(x)\,dx\in \CSS(\RR^d),$$
where
 $a(x,\xi)\in S^{m}_{1,0}(\RR^ d\times \RR^ d)$ with $m\in \RR$. Therefore we have,
 $$(a(x,D)v,\phi)_{L^2(\RR^d)}=(v,a^{*}(x,D)\phi)_{L^2(\RR^d)},$$
 such that $ a^{*}(x,D)\phi \in \CSS(\RR^d)$. Now let $u\in H^m_2(\RR^d)$.
 There exist $\{u_n\}_{n\in\NN}\subset\CSS(\RR^d)$ such that (see Corollary 3.42 in \cite{pseudo2}), $$\lim_{n\to\infty}\langle u_n-u,\phi\rangle=0,$$ for any $\phi\in\CSS(\RR^d)$. Therefore due to the above facts we have  $$\lim_{n\to\infty}\langle a(x,D)u_n,\phi\rangle=\lim_{n\to\infty}\langle u_n,a^{*}(x,D)\phi\rangle= \langle u,a^{*}(x,D)\phi\rangle =\langle a(x,D)u,\phi\rangle<\infty. $$
 Then we could conclude that the Fourier integral representation of $a(x,D)u$ is well defined in $H^m_2(\RR^d)$ with $m\in\RR$.
 See Theorem 3.41 in \cite{pseudo2} as well.%
\end{proof}

One can easily see under which conditions  $a(x,D)$ is also bounded from $L^p(\RR^d)$ into $L^p(\RR^d)$, $1\le p\le \infty$.
To see it, first, observe that the operator can also be represented by a kernel of the form
$$
a(x,D)  f(x) = \int_{\RR^d} k(x,x-y) f(y)\, dy, \quad x\in\RR^d,
$$
where the kernel is given by the inverse Fourier transform
$$
k(x,z) = \CF_{\xi\to z} \lk[ a(x,\xi)\rk] (z)\footnote{$ \CF_{\xi\to z}[f(x,\xi)](z)=\int_{\RR^d}e^{ -2\pi i \la\xi, z\ra }a(x,\xi)\, d\xi$.}.
$$
Differentiation gives the following estimate 
$$
\lk| k(x,z)\rk| \le C\, \lk| \partial _\xi ^\alpha p(x,\xi)\rk|\, |z|^{-\alpha}.
$$
By this estimate and the Young inequality for convolutions one can calculate bounds of the operator between Lebesgue spaces, like
$$
|a(x,D) f|_{L^q}\le \lk\|a\rk\|_{\CA^0_{\gamma,0;1,0}} |f|_{L^q},
$$
for $\gamma\ge  {d+1}$. In case, we have additional regularity of the functions, or the function is a distribution, it is not that obvious.
The next Theorem gives characterize the action of a pseudo--differential operator on Besov spaces.
\begin{thm}\label{bound1}
(compare  \cite[Theorem 6.19, p.\ 164]{pseudo2}) Let $\kappa,m\in\RR$, $a(x,\xi)\in S^{\kappa}_{1,0}(\RR^ d\times \RR^ d)$ and $1\le p,r\le \infty$. Then, $a(x,D):B^{\kappa +m}_{p,r}(\RR^d)\to B^{m}_{p,r}(\RR^d)$ is a linear and bounded operator.
\end{thm}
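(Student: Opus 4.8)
The plan is to reduce to the case $\kappa=m=0$ by a lifting argument, and then to run a Littlewood--Paley decomposition of the symbol together with an almost--orthogonality estimate.

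First I would use that the Bessel potential $\gr D\gl^{s}$, i.e.\ the Fourier multiplier with symbol $\gr\xi\gl^{s}\in S^{s}_{1,0}(\RR^d\times\RR^d)$, is an isomorphism of $B^{t}_{p,r}(\RR^d)$ onto $B^{t-s}_{p,r}(\RR^d)$ for all $s,t\in\RR$ and all $1\le p,r\le\infty$. Writing
$$
a(x,D)=\gr D\gl^{-m}\,\big[\,\gr D\gl^{m}\,a(x,D)\,\gr D\gl^{-\kappa-m}\,\big]\,\gr D\gl^{\kappa+m},
$$
the bracketed operator has a symbol in $S^{0}_{1,0}(\RR^d\times\RR^d)$ by the composition calculus for pseudo--differential operators (the orders $m$, $\kappa$, $-\kappa-m$ add up to $0$), while $\gr D\gl^{\kappa+m}\colon B^{\kappa+m}_{p,r}\to B^{0}_{p,r}$ and $\gr D\gl^{-m}\colon B^{0}_{p,r}\to B^{m}_{p,r}$ are bounded by the lifting property. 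Hence it suffices to treat a symbol $a\in S^{0}_{1,0}(\RR^d\times\RR^d)$ and to show that $a(x,D)\colon B^{0}_{p,r}(\RR^d)\to B^{0}_{p,r}(\RR^d)$ is bounded.

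Next I would fix a Littlewood--Paley partition of unity $\{\phi_j\}_{j\ge0}$, with $\phi_0$ supported in $\{|\xi|\le2\}$, $\phi_j$ supported in the annulus $\{2^{j-1}\le|\xi|\le2^{j+1}\}$ for $j\ge1$, and $\sum_{j}\phi_j\equiv1$, and decompose the symbol as $a=\sum_j a_j$ with $a_j(x,\xi):=a(x,\xi)\phi_j(\xi)$. Each $a_j$ lies in $S^{0}_{1,0}$ with $\supp_\xi a_j\subset\{|\xi|\sim2^j\}$ and with $\CA^{0}_{k_1,k_2;1,0}$--seminorms bounded uniformly in $j$; moreover $a_j(x,\xi)\phi_l(\xi)\equiv0$ for $|j-l|\ge2$, so that $a(x,D)f=\sum_j a_j(x,D)\,\tilde\phi_j(D)f$ with $\tilde\phi_j:=\phi_{j-1}+\phi_j+\phi_{j+1}$ (and $\phi_{-1}:=0$). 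Using the kernel representation recalled after Corollary \ref{limitting}, $a_j(x,D)$ has kernel $K_j(x,y)$ equal to the inverse Fourier transform in $\xi$ of $a_j(x,\xi)$ evaluated at $x-y$; integrating by parts in $\xi$ on $\{|\xi|\sim2^j\}$ gives $|K_j(x,y)|\le C_N\,2^{jd}(1+2^j|x-y|)^{-N}$ for every $N$, uniformly in $j$, whence $\|a_j(x,D)\|_{L(L^p,L^p)}\le C$ uniformly in $j$ and $p$ by Young's inequality, as in the estimate recorded after Corollary \ref{limitting}. The essential refinement I would establish is the almost--orthogonality bound
$$
\|\varphi_k(D)\,a_j(x,D)\|_{L(L^p,L^p)}\le C_N\,2^{-N|j-k|},\qquad j,k\ge0,\ N\in\NN,
$$
for the Littlewood--Paley family $\{\varphi_k\}$ defining the $B^{0}_{p,r}$--norm: for $k\ge j$ one writes the kernel of $\varphi_k(D)a_j(x,D)$ and integrates by parts in the $x$--variable, using that all $\partial_x^\beta a_j$ are bounded (the $\delta=0$ part of the $S^0_{1,0}$ estimates), while for $k<j$ one uses in addition the separation of the frequency supports together with Peetre's inequality.

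Finally I would sum up: for $f\in B^{0}_{p,r}(\RR^d)$,
$$
\|\varphi_k(D)\,a(x,D)f\|_{L^p}\le\sum_j\|\varphi_k(D)\,a_j(x,D)\|_{L(L^p,L^p)}\,\|\tilde\phi_j(D)f\|_{L^p}\le C_N\sum_j 2^{-N|j-k|}\,\|\tilde\phi_j(D)f\|_{L^p},
$$
and then take the $\ell^r(\NN)$--norm over $k$ and apply the discrete Young inequality (convolution with the $\ell^1$--sequence $\{2^{-N|l|}\}_{l}$) to obtain $\|a(x,D)f\|_{B^{0}_{p,r}}\le C\,\|f\|_{B^{0}_{p,r}}$; combined with the reduction this proves the theorem. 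The hard part will be the almost--orthogonality estimate: turning the mere boundedness of the $x$--derivatives of the symbol into the quantitative frequency decay $2^{-N|j-k|}$ needs a careful kernel computation (equivalently, the composition expansion of $\varphi_k\sharp a_j$ together with a quantitative bound on its remainder term); the remaining steps are routine Littlewood--Paley bookkeeping and applications of Young's inequality.
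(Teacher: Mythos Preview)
The paper does not give its own proof of this theorem: it is stated in the appendix with the citation ``compare \cite[Theorem 6.19, p.\ 164]{pseudo2}'' and is simply imported from Abels' book, with Remark \ref{bound11} pointing the reader to trace that proof for the quantitative constant. Your outline is precisely the standard argument carried out there: reduction to order zero via Bessel potentials and the composition calculus (Theorem \ref{product1}), a Littlewood--Paley decomposition $a=\sum_j a_j$ with $a_j(x,\xi)=a(x,\xi)\phi_j(\xi)$, the almost--orthogonality estimate $\|\phi_k(D)a_j(x,D)\|_{L(L^p,L^p)}\lesssim 2^{-N|j-k|}$ (this is exactly Lemma 6.22 in \cite{pseudo2}, which the present paper also invokes elsewhere), and the discrete Young inequality to sum. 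So your approach coincides with the reference the paper defers to; there is nothing to compare beyond noting that the paper itself contains no independent argument.
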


\begin{rem}\label{bound11}
Tracing step by step of the proof of Theorem 6.19 in \cite[p.\ 164]{pseudo2}, one can see that for all
$\kappa,m\in\RR$, $a(x,\xi)\in S^{\kappa}_{1,0}(\RR^ d\times \RR^ d)$ and $1\le p,r\le \infty$ and any $k\ge d+1$
the following inequality holds 
$$
\lk|a(x,D) f\rk|_{B^{m}_{p,r}} \le \lk\| a\rk\|_{\CA^\kappa_{k,0;\delta,0}}\, \lk| f\rk|_{B^{\kappa+m}_{p,r}}.
$$
\end{rem}

\medskip

To analyze the composition of two operators of given symbols, one has to evaluate
a so-called oscillatory integral. In particular, for any  $\chi\in\CSS(\RR^d\times \RR^d)$ with $\chi(0,0)=1$
and $a\in \CSS(\RR^d\times \RR^d)$,
we define the oscillatory integral by
\DEQS
\OS \iint e ^{ -iy\eta} a(y,\eta) \, dy\, d\eta :=\lim_{\ep\to 0}  \iint_{\RR^d\times\RR^d} \chi(\ep  y,\ep\eta) \, e ^{ -i\la y, \eta\ra } a(y,\eta) \, dy\, d\eta.
\EEQS
To calculate the oscillatory integral, the following Theorem is essential.
\begin{thm}\label{theoremb1}(compare \cite[Theorem 3.9,  p.\ 46]{pseudo2})
Let $m\in \RR$, $a\in\CA_{(d+1+m)\wedge 0,d+1;1,0}^m(\RR^d\times\RR^d)$,  and let $\chi\in\CSS(\RR^d\times \RR^d)$ with $\chi(0,0)=1$. Then the oscillatory integral
\DEQS
\OS \iint e ^{ -i\la y, \eta\ra } a(y,\eta) \, dy\, d\eta
\EEQS
exists and
\DEQS
\lk| \OS\iint e ^{ -i\la y, \eta\ra } a(y,\eta) \, dy\, d\eta \rk|\le C_{m,d} \lk\|a\rk\|_{\CA^m_{(d+1+m)\wedge 0,d+1;1,0}}.
\EEQS
\end{thm}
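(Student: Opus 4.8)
The plan is to prove existence of the oscillatory integral by a regularization argument and to obtain the bound by integration by parts, exploiting both factors of the phase $e^{-i\la y,\eta\ra}$: one uses $\eta^\alpha e^{-i\la y,\eta\ra} = (-1)^{|\alpha|} i^{-|\alpha|}\partial_y^\alpha e^{-i\la y,\eta\ra}$ to gain decay in $y$, and $y^\beta e^{-i\la y,\eta\ra} = (-1)^{|\beta|}i^{-|\beta|}\partial_\eta^\beta e^{-i\la y,\eta\ra}$ to gain decay in $\eta$, at the cost of transferring derivatives onto $a$ and onto the cutoff $\chi(\ep y,\ep\eta)$. First I would fix $\chi\in\CSS(\RR^d\times\RR^d)$ with $\chi(0,0)=1$ and set $I_\ep = \iint \chi(\ep y,\ep\eta)e^{-i\la y,\eta\ra} a(y,\eta)\,dy\,d\eta$, which is an absolutely convergent integral for each $\ep>0$ since $a\in\cal_b^\infty$ (or at least $\CA^m$-regular) and $\chi$ is Schwartz. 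The goal is to show $\lim_{\ep\to0} I_\ep$ exists and is bounded by $C_{m,d}\|a\|_{\CA^m_{(d+1+m)\wedge 0,\,d+1;1,0}}$, independently of the chosen $\chi$.

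The key step is the integration-by-parts identity. Introduce the differential operators $L_\eta = \la\eta\rangle^{-2}(1-\Delta_y)$ and $L_y = \la y\rangle^{-2}(1-\Delta_\eta)$, each of which fixes $e^{-i\la y,\eta\ra}$; then for any integers $N_1,N_2\ge0$ one rewrites
\[
I_\ep = \iint e^{-i\la y,\eta\ra}\, {}^tL_y^{N_2}\,{}^tL_\eta^{N_1}\bigl[\chi(\ep y,\ep\eta) a(y,\eta)\bigr]\,dy\,d\eta,
\]
where the transposed operators distribute derivatives (via Leibniz) onto $\chi(\ep y,\ep\eta)$ and onto $a$. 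Derivatives hitting $\chi$ produce factors of $\ep$ together with bounded functions of $(\ep y,\ep\eta)$; derivatives hitting $a$ are controlled by $\|a\|_{\CA^m}$ with the gains $\la\eta\rangle^{m-|\alpha|}$ and $\la y\rangle^{|\beta|-\cdot}$ built into the symbol class. Choosing $N_1$ so that $2N_1$ beats $m$ plus $d$ in the $\eta$-variable (this is exactly where $(d+1+m)\wedge0$ and $d+1$ derivatives in $\eta$ enter — we need enough $\eta$-derivatives of $a$ available and enough decay $\la\eta\rangle^{m-2N_1}\in L^1$, so $2N_1 > m+d$, i.e. $N_1 = d+1$ suffices when $m+d+1\le 0$ is handled by the $\wedge 0$), and $N_2 = d+1$ in the $y$-variable so that $\la y\rangle^{-2N_2}\in L^1(\RR^d)$, one gets a bound on the integrand that is integrable in $(y,\eta)$ uniformly in $\ep\in(0,1]$ and dominated by $C_{m,d}\|a\|_{\CA^m_{(d+1+m)\wedge0,d+1;1,0}}$ times an $L^1$ function. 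The terms carrying at least one $\ep$-factor vanish as $\ep\to0$ by dominated convergence, while the term with no derivative on $\chi$ converges to $\iint e^{-i\la y,\eta\ra}\,{}^tL_y^{N_2}\,{}^tL_\eta^{N_1}a(y,\eta)\,dy\,d\eta$ using $\chi(\ep y,\ep\eta)\to\chi(0,0)=1$ pointwise with a uniform bound. This simultaneously proves existence of the limit, its independence of $\chi$, and the claimed estimate.

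The main obstacle I expect is bookkeeping the exact number of derivatives: one must verify that the Leibniz expansion of ${}^tL_y^{N_2}{}^tL_\eta^{N_1}$ never requires more than $(d+1+m)\wedge0$ derivatives of $a$ in $\xi$ and $d+1$ derivatives in $x$, matching the symbol class hypothesis $a\in\CA^m_{(d+1+m)\wedge0,d+1;1,0}$, while still producing $L^1$-decay in both variables; the role of the operator $L_\eta = \la\eta\rangle^{-2}(1-\Delta_y)$ (which moves $y$-derivatives, not $\eta$-derivatives, onto $a$) versus $L_y$ has to be tracked carefully so that the count of $x$-derivatives and $\xi$-derivatives of $a$ comes out to the stated orders. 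Once the operators are paired correctly with the variables, the estimate is routine, and I would quote the Peetre inequality only if a shift of the form $\la\eta\rangle\lesssim\la\eta'\rangle\la\eta-\eta'\rangle^{|\cdot|}$ arises, though in this fixed-phase situation no shift occurs and the plain bounds suffice.
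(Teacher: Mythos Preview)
The paper does not supply its own proof of this result; it is stated in the appendix as a quotation of \cite[Theorem~3.9, p.~46]{pseudo2} with no argument given. Your outline is precisely the standard textbook proof found there: regularize by $\chi(\ep y,\ep\eta)$, use the identity $\langle\eta\rangle^{-2}(1-\Delta_y)e^{-i\la y,\eta\ra}=e^{-i\la y,\eta\ra}$ (and its companion with the roles of $y$ and $\eta$ swapped) sufficiently many times, integrate by parts to move the derivatives onto $\chi$ and $a$, and pass to the limit by dominated convergence. So there is nothing to compare against; your plan \emph{is} the reference's proof.

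One remark on the bookkeeping you flag as the obstacle. The index $(d+1+m)\wedge 0$ in the paper's statement appears to be a typo for $(d+1+m)\vee 0$: as written it is nonpositive whenever $m\ge -(d+1)$, which is absurd as a derivative count. Moreover, in the paper's convention $k_1$ counts $\xi$-derivatives (here $\eta$) and $k_2$ counts $x$-derivatives (here $y$), whereas your argument needs roughly $m+d+1$ derivatives in $y$ (from $(1-\Delta_y)^{N_1}$ with $2N_1>m+d$) and roughly $d+1$ in $\eta$ (from $(1-\Delta_\eta)^{N_2}$ with $2N_2>d$); so the two slots also appear to be transposed in the paper relative to the standard statement in \cite{pseudo2}. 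With those corrections your counts match exactly and no further idea is needed.
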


\begin{cor}\label{cor3.10}
(compare \cite[Corollary 3.10,  p.\ 48]{pseudo2}) Let  $a_j\in S^m_{1,0}(\RR^d\times\RR^d)$ be a bounded sequence in
$\CA_{d+1+m,d+1;\rho,\delta}^m(\RR^d\times \RR^d)$ such that there exists some $a\in \CA_{d+1+m,d+1;\rho,\delta}^m(\RR^d\times\RR^d)$
$$
\lim_{j\to\infty} \partial ^\alpha _\eta\partial^\beta _ya_j(y,\eta) =  \partial_\eta ^\alpha \partial_y^\beta a(y,\eta),
$$
for any $|\alpha|\le d+m+1$, $|\beta|\le d+1$, $y\in \RR^d$ and $\eta\in \RR^d$. Then
\DEQS
\lim_{j\to\infty} \OS\iint e^{-i \la y, \eta\ra  } a_j(y,\eta)\, dy\, d\eta=  \OS  \iint e^{-i \la y, \eta\ra  } a(y,\eta)\, dy\, d\eta.
\EEQS
\end{cor}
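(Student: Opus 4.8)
The plan is to deduce the convergence of the two oscillatory integrals from Lebesgue's dominated convergence theorem, applied to the $\ep$–free, absolutely convergent integral representation that is already produced inside the proof of Theorem \ref{theoremb1}. Recall the mechanism there: one inserts the regularising factor $\chi(\ep y,\ep\eta)$, integrates by parts a fixed finite number of times in $y$ and in $\eta$ — using $\langle\eta\rangle^{-2N}(1-\Delta_y)^N e^{-i\la y,\eta\ra}=e^{-i\la y,\eta\ra}$ to trade $\langle\eta\rangle$–growth for $\langle\eta\rangle$–decay, and $\langle y\rangle^{-2N'}(1-\Delta_\eta)^{N'}e^{-i\la y,\eta\ra}=e^{-i\la y,\eta\ra}$ to create $\langle y\rangle$–decay, the numbers $N,N'$ chosen inside the differentiability budget of the class $\CA^m_{(d+1+m)\wedge 0,d+1;1,0}$ — and finally lets $\ep\downarrow 0$. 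The outcome is a fixed linear partial differential operator $K$ in $(y,\eta)$, whose coefficients are smooth, bounded and decay like suitable powers of $\langle y\rangle$ and $\langle\eta\rangle$, such that for every $b$ in the relevant symbol class
\[
\OS\iint e^{-i\la y,\eta\ra}\,b(y,\eta)\,dy\,d\eta=\iint e^{-i\la y,\eta\ra}\,(Kb)(y,\eta)\,dy\,d\eta ,
\]
the right–hand side being an ordinary absolutely convergent Lebesgue integral with $|(Kb)(y,\eta)|\le C\,\|b\|_{\CA^m_{d+1+m,d+1;1,0}}\,w(y,\eta)$ for a fixed integrable weight $w$ and a constant $C=C(m,d)$.

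I would then apply this identity to $b=a_j$ (legitimate by hypothesis) and to $b=a$ (legitimate, since $a$ is assumed to lie in the same class) and compare the two Lebesgue integrals. On the one hand, the bound above gives a $j$–independent integrable majorant, $|(Ka_j)(y,\eta)|\le C\big(\sup_j\|a_j\|_{\CA^m_{d+1+m,d+1;1,0}}\big)\,w(y,\eta)$, the supremum being finite by assumption. On the other hand, $Kb$ is, by Leibniz, a finite linear combination of terms $c(y,\eta)\,\partial^\alpha_\eta\partial^\beta_y b(y,\eta)$ with fixed coefficients $c$ and with $|\alpha|\le d+m+1$, $|\beta|\le d+1$; hence the hypothesis that $\partial^\alpha_\eta\partial^\beta_y a_j(y,\eta)\to\partial^\alpha_\eta\partial^\beta_y a(y,\eta)$ pointwise for exactly those multi-indices yields $(Ka_j)(y,\eta)\to(Ka)(y,\eta)$ for every fixed $(y,\eta)$.

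Dominated convergence then gives $\iint e^{-i\la y,\eta\ra}(Ka_j)\to\iint e^{-i\la y,\eta\ra}(Ka)$, which by the displayed identity is precisely the asserted convergence of the oscillatory integrals. The only point that needs care is the very first step: one must check that the integrations by parts manufacturing the integrable weight $w$ can be performed within the differentiability orders available in the symbol class, and that the resulting constant is controlled by the $\CA^m$–norm uniformly in $j$ — but this is exactly the bookkeeping already carried out in the proof of Theorem \ref{theoremb1}, so no new idea is needed. Alternatively, one may transcribe the argument of \cite[Corollary 3.10, p.\ 48]{pseudo2} verbatim.
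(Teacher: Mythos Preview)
The paper does not supply its own proof of this corollary; it merely cites \cite[Corollary 3.10, p.\ 48]{pseudo2}. Your proposal reproduces exactly the standard argument found there --- rewrite the oscillatory integral via the integrations by parts of Theorem \ref{theoremb1} as an absolutely convergent Lebesgue integral, use the uniform $\CA^m$--bound on $(a_j)$ for a $j$--independent integrable majorant, use the pointwise convergence of the finitely many relevant derivatives for pointwise convergence of the integrand, and conclude by dominated convergence --- so your approach is correct and coincides with the reference the paper defers to.
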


\del{Note, that  for $a\in\CSS^m_{1,0}(\RR^d\times\RR^d)$ we have
$$
a(x,D) u(x) =\OS \iint e^{-i\la(x-y),\xi\ra } a(x,\xi) u(y)\, dy\, d\eta.
$$}

With the help of the oscillatory integral, one can show that the composition of two pseudo-differential operators
is
again a pseudo-differential operator.
Using formal calculations, an application of the Taylor formula leads to the
following characterization.

\begin{thm}\label{product1}
(compare  \cite[Theorem 3.16, p.\ 55]{pseudo2})
Let $a_1(x,\xi)\in S ^{m_1}_{1,0}(\RR^ d\times \RR^ d) $ and $a_2(x,\xi)\in S^{m_2}_{1,0}(\RR^ d\times \RR^ d)$. Then the composition  $a_1(x,D)\,a_2(x,D)$
is again a pseudo--differential operator,  whose symbol we denote by $[a_1\,\sharp\, a_2](x,\xi)$, and we have 
$$[a_1\,\sharp\, a_2](x,\xi)\in S^{m_1+m_2}_{1,0}(\RR^ d\times \RR^ d).$$
 Moreover, it can be expanded asymptotically as follows 
\DEQSZ\label{prodcomp}
[a_1\,\sharp\, a_2](x,\xi)\sim \sum_{\alpha} \frac {1\del{(-i)^ {|\alpha|}}} {\alpha !} \lk( \partial_\xi^ \alpha a_1(x,\xi)\rk) \lk( \partial_x^ \alpha a_2(x,\xi)\rk).
\EEQSZ
To be more  precise,  equation \eqref{prodcomp} means that
\DEQSZ \label{prodsym}
[a_1\,\sharp\, a_2](x,\xi)-\sum_{|\alpha|\le N} {1\del{(-i)^ {|\alpha|}}\over \alpha !} \lk( \partial_\xi^ \alpha a_1(x,\xi)\rk) \lk( \partial_x^ \alpha a_2(x,\xi)\rk)
\EEQSZ
belongs to $S^ {m_1+m_2-N}_{1,0}(\RR^ d\times \RR^ d)$ for every positive integer $N$.
\end{thm}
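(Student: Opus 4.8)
The plan is to follow the classical route: realise the composite operator $a_1(x,D)\,a_2(x,D)$ as a single pseudo--differential operator whose symbol is given by an oscillatory integral, verify that this symbol belongs to $S^{m_1+m_2}_{1,0}(\RR^d\times\RR^d)$, and then read off the asymptotic expansion by Taylor's formula with integral remainder. First I would compute, for $u\in\CSS(\RR^d)$, the action $a_1(x,D)\,a_2(x,D)u$. Since $a_2(x,D)u(x)=\int_{\RR^d}e^{i\la x,\eta\ra}a_2(x,\eta)\hat u(\eta)\,d\eta$, taking its Fourier transform and inserting the result into the Fourier representation of $a_1(x,D)$, and interchanging the (absolutely convergent) integrals that this produces on $\CSS(\RR^d)$, one arrives formally at
\[
a_1(x,D)\,a_2(x,D)u(x)=\int_{\RR^d}e^{i\la x,\xi\ra}\,\sigma(x,\xi)\,\hat u(\xi)\,d\xi,\qquad
\sigma(x,\xi)=\OS\iint e^{-i\la y,\eta\ra}\,a_1(x,\xi+\eta)\,a_2(x+y,\xi)\,dy\,d\eta .
\]
For fixed $(x,\xi)$ the map $(y,\eta)\mapsto a_1(x,\xi+\eta)\,a_2(x+y,\xi)$ lies in an amplitude class to which Theorem \ref{theoremb1} applies, so the oscillatory integral exists; Corollary \ref{cor3.10} justifies the passage from Schwartz amplitudes to the general case by approximation, and hence identifies $[a_1\,\sharp\,a_2]:=\sigma$.

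Next I would establish the symbol estimates. Differentiating under the oscillatory integral (legitimate by the same interchange results already used for $m_R$ in the proof of Theorem \ref{invert22}) reduces $\partial_\xi^\alpha\partial_x^\beta\sigma(x,\xi)$ to a finite sum of oscillatory integrals in which $a_1$ carries $\xi$--derivatives and $a_2$ carries $x$--derivatives. To make each such integral absolutely convergent I would integrate by parts, using $\eta^\gamma e^{-i\la y,\eta\ra}=(i\partial_y)^\gamma e^{-i\la y,\eta\ra}$ and $y^\gamma e^{-i\la y,\eta\ra}=(i\partial_\eta)^\gamma e^{-i\la y,\eta\ra}$ to gain as much polynomial decay in $y$ and in $\eta$ as needed; Peetre's inequality $\la\xi+\eta\ra^{s}\le c_s\la\xi\ra^{s}\la\eta\ra^{|s|}$ then separates the $\xi$-- and $\eta$--dependence of $\la\xi+\eta\ra^{m_1-|\alpha'|}$, so the $\eta$-- and $y$--integrals converge to a finite constant while the surviving $\la\xi\ra$--power is exactly $\la\xi\ra^{m_1+m_2-|\alpha|}$. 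This gives $\sigma\in S^{m_1+m_2}_{1,0}(\RR^d\times\RR^d)$, which is the first assertion.

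For the asymptotic expansion I would apply Taylor's formula to $\eta\mapsto a_1(x,\xi+\eta)$ about $\eta=0$,
\[
a_1(x,\xi+\eta)=\sum_{|\alpha|<N}\frac{1}{\alpha!}\bigl(\partial_\xi^\alpha a_1(x,\xi)\bigr)\,\eta^\alpha+\sum_{|\alpha|=N}\frac{N}{\alpha!}\,\eta^\alpha\int_0^1(1-\theta)^{N-1}\,\partial_\xi^\alpha a_1(x,\xi+\theta\eta)\,d\theta ,
\]
and insert it into the oscillatory integral for $\sigma$. In each monomial term the factor $\eta^\alpha e^{-i\la y,\eta\ra}$ is converted by integration by parts into $x$--derivatives falling on $a_2(x+y,\xi)$, after which the remaining $\OS\iint e^{-i\la y,\eta\ra}(\,\cdot\,)\,dy\,d\eta$ localises at $y=0$ and reproduces precisely $\frac1{\alpha!}\bigl(\partial_\xi^\alpha a_1\bigr)\bigl(\partial_x^\alpha a_2\bigr)(x,\xi)$, which is the $\alpha$--term of \eqref{prodcomp}. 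The remainder is once more an oscillatory integral of an amplitude which, because of the extra factor $\partial_\xi^\alpha a_1$ with $|\alpha|=N$, carries an additional $\la\xi\ra^{-N}$; repeating the integration--by--parts together with the Peetre argument shows it belongs to $S^{m_1+m_2-N}_{1,0}(\RR^d\times\RR^d)$, which is exactly \eqref{prodsym}.

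I expect the main obstacle to be the bookkeeping in the symbol estimate rather than any single hard idea: for every pair of multi--indices $(\alpha,\beta)$ one must choose enough integrations by parts in $y$ and in $\eta$ to render the amplitude integrable, while tracking that each $\xi$--differentiation of $a_1$ lowers the order by one and that the Peetre factors $\la\eta\ra^{|m_1|+\cdots}$ are always dominated by the decay gained from the $e^{-i\la y,\eta\ra}$ integrations by parts. The same care is what yields the sharp order $m_1+m_2-N$ for the remainder in \eqref{prodsym} rather than a weaker loss.
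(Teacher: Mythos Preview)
Your proposal is correct and follows the classical route. Note, however, that the paper does not give its own proof of this theorem: it is stated in the appendix as a background result with a reference to \cite[Theorem 3.16, p.\ 55]{pseudo2}, and the subsequent Remark~\ref{compositionremark} merely records the explicit integral remainder that your Taylor-expansion argument derives.
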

\begin{rem}\label{compositionremark}
Following the proof of Theorem 3.16 \cite[p. 53]{pseudo2},
one observes that
\DEQSZ \label{prodsym}
\lqq{ [a_1\,\sharp\, a_2](x,\xi)-\sum_{|\alpha|\le N} {1\del{(-i)^ {|\alpha|}}\over \alpha !} \lk( \partial_\xi^ \alpha a_1(x,\xi)\rk) \lk( \partial_x^ \alpha a_2(x,\xi)\rk)}
\\
&=&(N+1) \sum_{|\alpha|=N+1} \frac 1
 {\alpha !} \OS \iint e^{-i\la y, \eta\ra } \eta ^\alpha
  r_\alpha (x,\xi,y,\eta)\, dy\, d\eta
\\
&=&(N+1) \sum_{|\alpha|=N+1} \frac 1 
 {\alpha !} \OS \iint e^{-i\la y, \eta\ra } D_y^\alpha
  r_\alpha (x,\xi,y,\eta)\, dy\, d\eta
 \nonumber
\EEQSZ
with
$$r_\alpha (x,\xi,y,\eta)=\int_0^1 \lk[ \partial^\alpha  _{\xi'} p_1(x',\xi')\mid_{\xi'=\xi+\theta \eta\atop x'=x}
 \partial^\alpha  _{x'} p_2(x',\xi')\mid_{\xi'=\xi\atop x'=x+y}   (1-\theta)^N\rk]\, d\theta.
$$
\end{rem}
\del{\begin{defn}
Let $a(x,\xi)$ be a symbol in $S^m_{0,0}(\RR^d\times\RR^d)$ and $a(x,D)$ the associated pseudo--differential operator.
Suppose there exists a linear operator $a^\ast(x,D):\CSS(\RR^d)\to\CSS(\RR^d)$ such that
$$
(a(x,D)f,g) = (f,a^\ast(x,D) g),\quad f,g\in \CSS(\RR^d).
$$
Then we call $a^\ast(x,D)$ the formal adjoint operator of the operator $a(x,D)$.
\end{defn}}
\del{
The next point of interest is to classify the continuous operator and the exact Besov spaces on which they are operating.
\begin{thm}\label{bound1}
(compare  \cite[Theorem 6.19, p.\ 164]{pseudo2}) Let $\kappa,m\in\RR$, $a(x,\xi)\in S^{\kappa}_{1,0}(\RR^ d\times \RR^ d)$ and $1\le q,r\le \infty$. Then $a(x,D):B^{\kappa +m}_{p,r}(\RR^d)\to B^{m}_{p,r}(\RR^d)$ is a linear and bounded operator.
\end{thm}

\begin{rem}\label{bound11}
Tracing step by step of the proof of Theorem 6.19 in \cite{pseudo2}, one can see that for any $\kappa\in\RR$ and  $k\ge d+1+\kappa$ there exists a constant $C>0$ such that
we have  for any $a \in \CA^\kappa_{k,0;\delta,0}(\RR^d\times \RR^d)$ and $f\in B^{\kappa+m}_{p,r}(\RR^d)$
$$
\lk|a(x,D) f\rk|_{B^{m}_{p,r}} \le C \lk| a\rk|_{\CA^\kappa_{k,0;\delta,0}}\, \lk| f\rk|_{B^{\kappa+m}_{p,r}} .
$$
\end{rem}

}

\def\polhk#1{\setbox0=\hbox{#1}{\ooalign{\hidewidth\lower1.5ex\hbox{`}\hidewid%
th\crcr\unhbox0}}}
  \def\polhk#1{\setbox0=\hbox{#1}{\ooalign{\hidewidth\lower1.5ex\hbox{`}\hidew%
idth\crcr\unhbox0}}}

\end{document}